\newtheorem{theorem}{Theorem}[section]
\newtheorem{thm}{Theorem}[section]
\newtheorem{lemma}[theorem]{Lemma}
\newtheorem{proposition}[theorem]{Proposition}
\newcommand{\R}{\mathbb{R}}
\newcommand{\stnorm}{L_{\omega}^{\alpha}L_{t}^{\alpha}L_{x}^{\beta}(\Omega\times [0,\infty)\times \mathbb{R}^{3})}
\renewcommand{\d}{\partial}
\newcommand{\f}{\frac}
\newcommand{\beq}{\begin{equation}}
\newcommand{\eeq}{\end{equation}}
\newcommand{\beqq}{\begin{equation*}}
\newcommand{\XXX}{\mathbb{X}}
\newcommand{\eeqq}{\end{equation*}}
\newcommand{\domain}{\Omega\times [0,\infty)\times \mathbb{R}^{3}}
\newcommand{\domaint}{\Omega\times [0,T]\times \mathbb{R}^{3}}
\theoremstyle{definition}
\theoremstyle{remark}
\newtheorem{remark}[theorem]{Remark}
\numberwithin{equation}{section}
\newcommand{\SSS}{\mathcal{S}}
\definecolor{darkgreen}{rgb}{0.1,0.6,0.1}
\definecolor{darkred}{rgb}{0.7,0.1,0.1}
\definecolor{darkblue}{rgb}{0,0,0.7}
\def\scal#1{\langle#1\rangle}
\newcommand{\eps}{\varepsilon}
\begin{document}

\address{Chenjie Fan
\newline \indent Academy of Mathematics and Systems Science and Hua Loo-Keng Key Laboratory of Mathematics, Chinese Academy of Sciences,\indent 
\newline \indent  
Beijing, China.\indent }
\email{fancj@amss.ac.cn}

\address{Weijun Xu
\newline \indent Beijing International Center for Mathematical Research, Peking University, China\indent }
\email{weijunxu@bicmr.pku.edu.cn}

\address{Zehua Zhao
\newline \indent Department of Mathematics and Statistics, Beijing Institute of Technology,
\newline \indent MIIT Key Laboratory of Mathematical Theory and Computation in Information Security,
\newline \indent  Beijing, China. \indent}
\email{zzh@bit.edu.cn}

\title[Long time behavior of stochastic NLS with a small multiplicative noise]{Long time behavior of stochastic NLS with a small multiplicative noise}
\author{Chenjie Fan, Weijun Xu and Zehua Zhao}
\maketitle

\begin{abstract}
We prove the global space-time bound for the mass critical nonlinear Schr\"odinger equation perturbed by a small multiplicative noise in dimension three. The associated scattering behavior are also obtained. We also prove a global Strichartz space-time bound for the linear stochastic model, which is new itself and serves a prototype model for the nonlinear case. The proof combines techniques from \cite{fan2018global}, \cite{fan2020long} as well as local smoothing estimates for linear Schr\"odinger operators.
\end{abstract}

\setcounter{tocdepth}{1}
\tableofcontents

\section{Introduction}
\subsection{The model and statement of main results}
The aim of this article is to study the long-time behavior of the solution $u$ to the equation
\begin{equation}\label{eq: snls}
\begin{cases}
i \d_t u + \Delta u = |u|^{4/3} u + \eps V u \circ \frac{dB_t}{dt}\;, \quad (t,x) \in \R^{+} \times \R^{3}\;,\\
u(0,\cdot)=u_{0} \in L_{\omega}^{\infty} {L^2_{x}}(\mathbb{R}^{3})\;.
\end{cases}
\end{equation}
Here, $V \in \SSS(\R^3)$ is a real-valued Schwartz function independent of time, $B$ is the standard Brownian motion, and $\eps>0$ is a small parameter. Finally, $\circ$ denotes the Stratonovich product which preserves the $L^2$-norm of the solution $u$. This is the stochastic perturbation of mass-critical defocusing nonlinear Schr\"odinger equation in $d=3$. 

We write \eqref{eq: snls} in its It\^o form as
\begin{equation}\label{eq: nonlinearmain}
\begin{cases}
i \d_t u + \Delta u = |u|^{4/3}u - \frac{i \eps^2}{2} V^{2} u  + \eps V u \cdot \frac{d B_t}{d t}\;,\\
u(0,\cdot) \in L_{\omega}^{\infty}L_{x}^{2}\;,
\end{cases}
\end{equation}
where now the product between $u$ and $\frac{dB_t}{dt}$ is It\^o, and the extra term $-\frac{\eps^2}{2}V^{2}u$ is the It\^o-Stratonovich correction, which keeps the $L^2$ norm conserved. 

We will start with a deterministic $L_{x}^{2}$ initial data $u_{0}$, and the extensions to random initial data in $L_{\omega}^{\infty} L_{x}^{2}$ is straightforward as long as the randomness in $u_0$ is independent of the Brownian motion. Our main theorem is the following. 

\begin{thm}\label{thm: main0}
Suppose $V \in \SSS(\R^3)$ is a real-valued, time-independent Schwartz function. Let $\eps$ be sufficiently small (depending on $V$ and $\|u_0\|_{L_x^2}$). Then there exists $C = C(\|u_0\|_{L_x^2})$ such that the solution $u$ to \eqref{eq: nonlinearmain} satisfies the space-time bound
\begin{equation}\label{eq: estimatemain_nonlinear}
\|u\|_{L_{\omega}^{\alpha}L_{t}^{\alpha}L_{x}^{\beta}(\Omega\times [0,\infty)\times \mathbb{R}^{3})} \leq C(\|u_0\|_{L_x^2})\;,
\end{equation}
where $(\alpha, \beta)=(4-, 3+)$ is an admissible pair in $d=3$, and $\alpha\geq \frac{7}{3}$. 

And, we have

\begin{equation}\label{eq: estimatemain_nonlinear222}
\|u\|_{L_{\omega}^{2}L_{t}^{\alpha}L_{x}^{\beta}(\Omega\times [0,\infty)\times \mathbb{R}^{3})} \leq C(\|u_0\|_{L_x^2})\;,
\end{equation}
for all non-end point admissible pair $(\alpha,\beta)$,

Furthermore, we have the scattering asymptotic in the sense there exists $u^{+}\in L_{\omega}^{\infty}L_{x}^{2}$, such that 
\begin{equation}
\|u(t)-e^{it\Delta}u^{+}\|_{L_{\omega}^{\rho^{*}}L_{x}^{2}}\xrightarrow{t\rightarrow \infty} 0,	
\end{equation}
for some $\rho^{*}>1$.
And we also have the a.s. convergence in the sense ,
\begin{equation}
	\|u(t)-e^{it\Delta}u^{+}\|_{L_{x}^{2}}\xrightarrow{t\rightarrow \infty} 0, a.s.
\end{equation}
\end{thm}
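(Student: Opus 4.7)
The plan is to treat \eqref{eq: nonlinearmain} as a small stochastic perturbation of the deterministic defocusing mass-critical NLS and to propagate the known deterministic global Strichartz bound (Dodson) through a bootstrap across a finite partition of $[0,\infty)$. Let $v$ solve $i\partial_t v + \Delta v = |v|^{4/3} v$ with $v(0)=u_0$, so that $\|v\|_{L^\alpha_t L^\beta_x(\R^+\times\R^3)} \le C(\|u_0\|_{L^2})$. Choose a finite partition $0 = t_0 < t_1 < \cdots < t_N = \infty$ such that $\|v\|_{L^\alpha_t L^\beta_x(I_k)} < \eta_0$ on each $I_k = [t_k, t_{k+1}]$, where $\eta_0$ is the stability threshold for mass-critical NLS in $d=3$.

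Next, on each $I_k$ I compare $u$ to $v$ via the deterministic stability lemma. The Duhamel formula for $u$ on $I_k$ carries two forcings absent for $v$: the It\^o--Stratonovich correction $-\tfrac{i\eps^2}{2} V^2 u$ and the It\^o integral $\eps \int e^{i(t-s)\Delta} V u\, dB_s$. Both must be controlled in the Strichartz norm $L^\alpha_\omega L^\alpha_t L^\beta_x(\Omega \times I_k \times \R^3)$. For the stochastic term I apply Burkholder--Davis--Gundy in $\omega$ together with the key observation that $V \in \SSS(\R^3)$ acts as a rapidly-decaying spatial weight, so that $Vu$ enjoys an extra half-derivative of spacetime control via the Kato local smoothing estimate for $e^{it\Delta}$. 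Trading this local-smoothing gain against the BDG factor and the smallness of $\eps$ yields a bound with a favorable power of $\eps$ that closes the stability bootstrap on $I_k$. The exact $L^2$ mass conservation of the Stratonovich flow transfers the datum bound from $I_k$ to $I_{k+1}$ without loss, and summing over $k$ produces \eqref{eq: estimatemain_nonlinear}. Estimate \eqref{eq: estimatemain_nonlinear222} then follows from \eqref{eq: estimatemain_nonlinear} by interpolation with mass conservation and Strichartz duality, accounting for the fact that the exponent constraint $\alpha \geq 7/3$ of the first bound disappears once we weaken the probabilistic integrability to $L^2_\omega$.

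For scattering, the global bound \eqref{eq: estimatemain_nonlinear} guarantees that both the nonlinear Duhamel integral $\int_0^\infty e^{-is\Delta}|u|^{4/3}u\, ds$ and the stochastic integral $\int_0^\infty e^{-is\Delta}\eps V u\, dB_s$ converge in $L^{\rho^*}_\omega L^2_x$ for some $\rho^* > 1$: the first by Strichartz duality, the second by BDG combined again with the local smoothing gain on $Vu$. This defines $u^+$ and supplies a quantitative rate. Almost-sure $L^2_x$ convergence then follows by applying Borel--Cantelli to dyadic tail pieces of these integrals, the requisite tail summability being furnished by the global space-time bound.

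The main obstacle is the interval-by-interval bootstrap. Unlike the deterministic case, where partition plus stability yields a one-shot conclusion, each $I_k$ here contributes a fresh stochastic error that must not compound as $N$ becomes large. The mechanism preventing accumulation is twofold: (i) the positive power of $\eps$ extracted on every interval via local smoothing and BDG, and (ii) the exact $L^2$ mass conservation of the Stratonovich flow, which rules out secular growth of the forcing amplitude $\eps V u$. The linear stochastic Strichartz bound advertised in the abstract isolates exactly this mechanism in a clean prototype setting and serves as the template for the nonlinear bootstrap just described.
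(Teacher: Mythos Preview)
Your proposal has a genuine structural gap at its core: you apply the \emph{deterministic} mass-critical stability lemma on a \emph{fixed} partition $\{I_k\}$ chosen from the deterministic solution $v$, but you feed it a stochastic forcing that you only control in the averaged norm $L^\alpha_\omega L^\alpha_t L^\beta_x$. The stability lemma is a pathwise statement: it requires the perturbation $\eta$ to satisfy $\|\eta\|_{L^\alpha_t L^\beta_x(I_k)} \le \eta_0$ for the given $\omega$, and its output depends nonlinearly on the size of $\eta$ once this threshold is exceeded. A BDG bound of the form $\|\eta\|_{L^\alpha_\omega L^\alpha_t L^\beta_x(I_k)} \lesssim \eps(\cdots)$ does not imply $\|\eta\|_{L^\alpha_t L^\beta_x(I_k)} \le \eta_0$ almost surely; on the (possibly large-measure) event where $\eta$ is large on some $I_k$, stability gives you nothing, and your bootstrap does not close. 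The mechanisms you cite---the $\eps$ gain and mass conservation---are correct ingredients, but they do not by themselves convert moment smallness into the pathwise smallness the stability lemma needs.

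The paper resolves this by making the partition itself $\omega$-dependent. It introduces a maximal stochastic object $M^*(t)$ (built from $\sup_{a\le b\le t}\|\int_a^b H(t-s)\eps Vu\,dB_s\|$ in two norms, one Strichartz and one encoding local smoothing), and for each fixed $\omega$ partitions $[0,\infty)$ into $J(\omega)\lesssim \|M^*\|_{L^\alpha_t}^\alpha/e_M^\alpha+1$ intervals on which $M^*$ is pathwise small. Stability (in a form adapted to the damped propagator $H$, which absorbs the It\^o--Stratonovich correction) is then legitimately applied on each interval, yielding $\|u\|_{L^\alpha_t\XXX}^\alpha \lesssim J(\omega)$; integrating in $\omega$ reduces everything to the a priori estimate $\|M^*\|_{L^\alpha_\omega L^\alpha_t}\lesssim 1+\eps\|M^*\|_{L^\alpha_\omega L^\alpha_t}$, which is where BDG and local smoothing actually enter (Lemma~\ref{lem: pdfa} and Proposition~\ref{prop: nlmain2}). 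Two further points: \eqref{eq: estimatemain_nonlinear222} is \emph{not} obtained by interpolation from \eqref{eq: estimatemain_nonlinear}---it requires a separate argument (Section~9) that first establishes a weighted $L^2_\omega L^2_t H^{1/2}$ bound on $u$ via a second bootstrap; and the scattering argument (Section~\ref{sec: scattering}) likewise runs a second maximal-function bootstrap on $M_1^*$ rather than reading off convergence directly from the global bound.
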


\begin{remark}
As in \cite{fan2020long}, the scattering is not a direct consequence of the global space time bound as in the usual deterministic case.	
\end{remark}

\begin{remark}
Here we use the notation $(4-,3+)$ mean $(\alpha,\beta)$ is admissible and 	$\alpha<4$, (which implies $\beta>3$). We don't need $\alpha$ close to $4$. Similarly we use the notation $(2+,6-)$ . 
\end{remark}

We also prove the parallel result for the linear model, which usually bears the name of Strichartz estimates. This is the following theorem.

\begin{thm}\label{thm: main}
Let $u$ satisfy the linear equation (in It\^o form)
\begin{equation}\label{eq: linearmain}
	i \d_t u + \Delta u = \eps V u \cdot \frac{d B_t}{dt} - \frac{i \eps^2}{2} V^{2} u\;, \quad u(0, \cdot) = u_{0}\in L_{x}^{2}\;,
\end{equation}
where $V \in \SSS(\R^3)$ is a real-valued Schwartz function, and $\eps>0$ is a sufficiently small parameter depending on $V$. Then one has the global space-time Strichartz estimate
\begin{equation}\label{eq: estimatemain}
\|u\|_{L_{\omega}^{\alpha} L_{t}^{\alpha}L_{x}^{\beta}(\Omega\times [0,\infty) \times \mathbb{R}^{3})}\lesssim \|u_{0}\|_{L_{x}^{2}}
\end{equation}
where $(\alpha, \beta)=(4-, 3+)$ and $(2+,6-)$ is an admissible pair for the free Schr\"odinger operator. 
\end{thm}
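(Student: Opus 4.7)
The plan is to combine the It\^o-Duhamel representation of the solution with stochastic Strichartz inequalities, Kato-Yajima local smoothing for the free Schr\"odinger propagator, and a bootstrap that exploits the smallness of $\eps$. Writing
\[
u(t) = e^{it\Delta} u_0 \,-\, i\eps \int_0^t e^{i(t-s)\Delta}\bigl(V u(s)\bigr)\, dB_s \,-\, \tfrac{\eps^2}{2} \int_0^t e^{i(t-s)\Delta}\bigl(V^2 u(s)\bigr)\, ds,
\]
denote the three contributions by $u_L$, $u_M$, $u_D$. The free piece $u_L$ satisfies both target mixed norms deterministically by the classical Strichartz estimates, so the task reduces to controlling $u_M$ and $u_D$ in the same norms.

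For the martingale term I would apply the Burkholder-Davis-Gundy inequality at moment level $\alpha$ in $\omega$ combined with the deterministic Strichartz inequality on the free propagator (the ``stochastic Strichartz'' estimate), producing schematically
\[
\|u_M\|_{L^\alpha_\omega L^\alpha_t L^\beta_x} \lesssim \eps\, \|Vu\|_{L^\alpha_\omega L^2_t L^2_x}.
\]
For the drift term $u_D$, I would apply the inhomogeneous deterministic Strichartz inequality $\omega$-pointwise, writing $V^2 u = V\cdot (Vu)$ so that one copy of $V$ is absorbed into a Kato-Strichartz pairing (combining local smoothing in dual form with homogeneous Strichartz via $TT^*$). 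This yields a bound of the same form with $\eps^2$ in place of $\eps$. Both reductions thus express $u_M$ and $u_D$ in terms of a single quantity, $\|Vu\|_{L^\alpha_\omega L^2_t L^2_x}$.

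The crucial closure is to control $\|Vu\|_{L^\alpha_\omega L^2_t L^2_x}$ by $\|u_0\|_{L^2_x}$. Since $V \in \SSS(\R^3)$ satisfies $|V(x)| \lesssim \langle x\rangle^{-\sigma}$ for any $\sigma > \tfrac12$, the Kato-Yajima local smoothing estimate
\[
\|V e^{it\Delta} v\|_{L^2_t L^2_x} \lesssim \|v\|_{L^2_x}
\]
bounds the contribution of $u_L$. The analogous $L^\alpha_\omega$-level smoothing estimate for the stochastic convolution, obtained by pairing BDG at moment $\alpha$ with the smoothing bound on the free propagator, and the corresponding estimate for the drift, combine to give
\[
\|Vu\|_{L^\alpha_\omega L^2_t L^2_x} \lesssim \|u_0\|_{L^2_x} + C\eps\, \|Vu\|_{L^\alpha_\omega L^2_t L^2_x},
\]
which closes by absorption once $C\eps < 1$. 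Mass conservation, which holds almost surely under the It\^o-Stratonovich correction, is invoked to make a continuity-in-$T$ argument rigorous, so that the global bootstrap on $[0,\infty)$ is genuinely controlled by the a priori small norm of $\eps$. Feeding the closed smoothing bound back into the Strichartz bounds on $u_M$ and $u_D$ yields \eqref{eq: estimatemain}.

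The main technical obstacle will be working at the higher moment level $\alpha > 2$ uniformly in time: the cleanest BDG-based stochastic Strichartz lives naturally at $L^2_\omega$, and producing the $L^\alpha_\omega$-level Strichartz and local smoothing estimates required for the absorption step calls on the maximal-inequality techniques developed in \cite{fan2018global} and \cite{fan2020long}. A secondary issue is the global-in-time character of all bounds: smallness of $\eps$ must be chosen in terms of both $V$ and the local smoothing constant so that the absorption constant $C\eps$ is strictly less than $1$ uniformly in $T$. The endpoint-avoiding choice $(\alpha,\beta)=(4-,3+)$, rather than $(4,3)$, reflects precisely the slack needed in BDG with moment strictly greater than $2$ for this absorption to succeed.
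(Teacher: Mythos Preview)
Your proposal is correct in outline and takes a genuinely different route from the paper. The paper does not bootstrap on the weighted quantity $\|Vu\|_{L^\alpha_\omega L^2_t L^2_x}$ at all. Instead it rewrites the equation around the \emph{damped} propagator $H(t)$ (absorbing the It\^o correction $-\tfrac{i\eps^2}{2}V^2 u$ into the linear flow), expands the stochastic Duhamel term once more to produce a double stochastic integral, and then closes a bootstrap directly on $\|u\|_{L^\alpha_\omega L^\alpha_t L^\beta_x}$. The closure there is delicate: after Burkholder one faces $\int_0^t \|H(t-s)VF(s)\|_{L^\beta_x}^2\,ds$, and the pointwise dispersive bound gives a kernel $(t-s)^{-3(1-2/\beta)}$ which is non-integrable near $s=t$ once $\beta>3$. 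The paper repairs this by using the half-derivative local smoothing of $H$ to trade $L^\beta_x$ for $W^{s_\alpha,\tilde\beta}$ with $\tilde\beta<3$ on the short-range piece $|t-s|\le 1$, and uses straight dispersive decay on $|t-s|\ge 1$; the second Duhamel expansion is there precisely so that the input already carries a factor of $V$ and hence sits in the localized Sobolev scale.

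Your scheme bypasses that entire mechanism: the Minkowski/Strichartz argument you sketch gives the global stochastic Strichartz bound $\|u_M\|_{L^\alpha_\omega L^\alpha_t L^\beta_x}\lesssim \eps\|Vu\|_{L^\alpha_\omega L^2_t L^2_x}$ with no $|t-s|$ splitting at all, and Kato--Yajima smoothing (here one really uses decay like $|x|^{-1}$, not merely $\langle x\rangle^{-1/2-}$, which is fine since $V\in\SSS$) then lets you close an independent bootstrap on $\|Vu\|_{L^\alpha_\omega L^2_t L^2_x}$. This is cleaner for the linear model; what the paper's heavier route buys is a framework (damped propagator, maximal function $M^*$, the localized space $X^{s_\alpha,\tilde\beta'}$) that transfers directly to the nonlinear Theorem~\ref{thm: main0}, where no simple analogue of your $\|Vu\|_{L^2_{t,x}}$ closure is available. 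One small correction: your last paragraph attributes the restriction $(\alpha,\beta)=(4-,3+)$ to slack needed in BDG, but your own argument does not require this --- the exclusion of $(4,3)$ and $(2,6)$ in the paper comes from the dispersive/local-smoothing splitting described above, not from any moment issue in Burkholder.
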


\begin{remark}
The scattering behavior for the linear equation has already been presented in \cite{FX}.
\end{remark}
\begin{remark}
For both the nonlinear and linear models,  one can use interpolation with pathwise $L^{2}$ conservation law to recover $(\alpha,\beta)$ in the range from $(4,3)$ to $(\infty, 2)$. However, our analysis misses the endpoint $(2,6)$ even in the linear case. It appears that besides the usual importance in the deterministic case, the pair $(2,6)$ is also of particular interest in our stochastic problem, see Subsection \ref{sec: bri}.  We also point out that in the linear case, the smallness of $\eps$ depends on $V$, but not on the size of initial data $\|u_0\|_{L_x^2}$. 
\end{remark}

\begin{remark}
Since the equation \eqref{eq: linearmain} is linear, Theorem~\ref{thm: main} also extends to random initial data $u_0 \in L_{\omega}^{\alpha}L_{x}^{2}$ immediately, as long as the randomness is independent with the Brownian motion. 
\end{remark}

\begin{remark}
Some key ingredients in this article is of perturbation nature, and hence cannot be applied to the large noise case, in particular for the nonlinear model. On the other hand, it is not hard to generalize to replace the noise $V \frac{d B_t}{d t}$ by infinite dimensional Wiener process as long as there is enough spatial regularity. But since our main focus is the effect of the noise on the long-time behavior of the dynamics, we focus on the simplest possible case and consider one Brownian motion. 
\end{remark}

\subsection{Background and motivation}

The main aim of this article is to investigate the effect of a multiplicative noise (without decay in time) in the long time behavior of a dispersive system. 

The nonlinear Schr\"odinger equation is one of the most typical dispersive equations. The global well-posedness of the $d$-dimensional deterministic defocusing nonlinear Schr\"odinger equation
\begin{equation} \label{eq: deterministic_nls}
    i \d_t v + \Delta v = |v|^{p-1} v\;, \qquad v(0,\cdot) \in L_x^2(\R^d)
\end{equation}
with $L_x^2$ initial data has been an important topic of study in dispersive equations. The equation is called \textit{mass-subcritical} when $p-1 < \frac{4}{d}$, and \textit{mass-critical} when $p-1 = \frac{4}{d}$. The local well-posedness to \eqref{eq: deterministic_nls} for $p \in [1,1+\frac{4}{d}]$ is based on Strichartz estimates and is well understood classically. When $p < 1 + \frac{4}{d}$, the local existence time depends on the initial data via its $L^2$-norm only, so global well-posedness of the solution is a direct consequence of the local well-posedness and $L^2$ conservation law. 

When $p=1+\frac{4}{d}$ (which corresponds to the mass-critical case), the global existence of the solution to \eqref{eq: deterministic_nls} with general $L^2$ initial data becomes much subtler. It has been outstanding for a long time, and was finally resolved by Dodson in a series of works \cite{Dodson3, Dodson2, Dodson1}. Furthermore, the global space-time bound of the solution in terms of the $L^2$-norm of the initial data was established, and scattering of the solution comes as a consequence. 

The stochastic version of \eqref{eq: deterministic_nls} with a Stratonovich multiplicative noise is given by
\begin{equation} \label{eq: stochastic_nls}
	i \d_t u + \Delta u = |u|^{p-1} u + u \circ \frac{d W_t}{dt}\;, 
\end{equation}
where $W$ is typically a Gaussian noise that is white in time and colored in space. This model has been studied under various spatial assumptions on $W$, most of which could roughly be formulated by
\begin{equation*}
	W_{k}(t,x) = \sum_{k} \lambda_k B_{t}^{(k)} V_{k}(x)\;,
\end{equation*}
where $B^{(k)}$ are independent standard Brownian motions, $V_k$ are nice enough functions, and $\lambda_k$ satisfies proper decay assumptions. To the best of our knowledge, \cite{dBD99} was the first to construct a global solution to \eqref{eq: stochastic_nls} for $p < 1 + \frac{4}{d}$ (\textit{mass-subcritical} case). Subsequent refinements and extensions (also in subcritical cases) include \cite{dBD03}, \cite{BRZ14, Brzezniak-Millet, Brzezniak-Liu-Zhu}, etc. 

In \cite{fan2018global}, the first two authors established global well-posedness for the stochastic \textit{mass-critical} case in dimension $1$ ($p = 5$). The main ingredients are careful perturbation analysis around the global well-posedness results of Dodson as well as a martingale type control. Hence the arguments also work for other dimensions, at least when the nonlinearity is not too irregular. \cite{DZhang} independently constructed global solutions to the stochastic defocusing equations in both mass and energy critical regimes. 

On the other hand, very little was known about the long time behavior of the solutions. As a toy model, one may introduce a parameter $\gamma>0$, and consider the case where the noise $W$ in \eqref{eq: stochastic_nls} is given by
\begin{equation*}
	W(t,x) = \sum_{k} \lambda_k V_{k}(x) \cdot \frac{B_{t}^{(k)}}{\scal{t}^{\gamma}}.
\end{equation*}
When $\gamma>1$, the time-decay of the noise is strong enough, and it is not hard to see that the long time dynamics can be reduced to local dynamics. From the viewpoint of Burkholder inequality, the next natural threshold is $\gamma>\frac{1}{2}$, in which case the noise $W$ has finite quadratic variation over the whole time line. Indeed, in \cite{HRZ}, the authors established the scattering of the solution to \eqref{eq: stochastic_nls} for noise with finite quadratic variation over the whole timeline\footnote{Some extra technical assumptions are needed, but we focus on the time decay property of the noise here.}, which covers all $\gamma > \frac{1}{2}$. But their techniques seem to break down at $\gamma = \frac{1}{2}$, where now the noise has infinite quadratic variation. In the recent work \cite{fan2020long}, the first and third authors established a global space-time bound and scattering of the solution to \eqref{eq: stochastic_nls} in dimension $3$ with $p-1=\frac{4}{3}$ (critical in dimension $3$) and arbitrary $\gamma>0$. The techniques also work for general $d>3$. 

The natural question now is whether one can set the factor $\gamma=0$, not allowing asymptotic decay of the noise in time, and this is the main aim of this article (see also \cite{FX} for the dispersive estimate for the \textit{linear} equation with a small multiplicative constant $\eps$ in front of the noise but no decay assumption). 

Our main result is to finally establish the global space-time bound  and scattering for \eqref{eq: stochastic_nls} without a time-decay factor in the noise (but with a fixed small multiplicative constant $\eps$). We want to remark that although the main interest here is to understand long time (decaying) behaviours of the solution when the noise itself does not decay, our theorem also extends the range of exponents (in solution space) allowed for local well-posedness compared to the previous study in \cite{fan2018global}\footnote{\cite{fan2018global} is written for 1d model only, but can be naturally generalized to other dimensions}, where the spatial integrability exponent $\beta$ could only take value below $3$. 

As a by-product of the proof, we also establish a global space-time bound for the linear stochastic equation \eqref{eq: linearmain}, which is a global-in-time stochastic Strichartz estimate. Although there have been various types stochastic Strichartz estimates (for example \cite{dBD99, Brzezniak-Millet, Hor, Brzezniak-Liu-Zhu}), all of them are on a fixed time interval. To the best of our knowledge, Theorem~\ref{thm: main} is the first global-in-time one. 

We refer to Sections~\ref{sec: overview_linear} and~\ref{sec: overview_nonlinear} for a description of the strategy in proving Theorems~\ref{thm: main} and~\ref{thm: main0}. 

Finally, we want to refer to the introduction of \cite{dBD03}, \cite{HRZ} for the physical background of \eqref{eq: nonlinearmain}. But if you believe NLS is of physical importance, and there is always some noise in the real world and measurement, then the study of \eqref{eq: nonlinearmain} is of course natural.

\subsection{A brief discussion on $TT^{*}$ and endpoint Strichartz estimate}\label{sec: bri}

We give a brief discussion on our linear estimate \eqref{eq: estimatemain}. Strichartz estimates play a fundamental role in the study of dispersive equations. Historically, deterministic Strichartz estimates follow from the dispersive estimates via a $TT^{*}$ argument. For our model \eqref{eq: linearmain}, the corresponding dispersive estimate was proved in \cite{FX}. But defining the operator $T^{*}$ requires running Brownian motion and solving the equation backwards in time. Since the Burkholder inequality and martingale structure is used essentially in our proof, it is not clear for us at this stage on how to apply similar scheme to the nonlinear model, but we expect rough path theory might be of help here. On the other hand, our proof for the linear problem here also serves as a toy model for the nonlinear problem. 

We also remark again that we miss the endpoint $(2,6)$ in our Strichartz estimate. This end point is highly non-trivial even in the free Schr\"odinger case (see \cite{keel1998endpoint}). One key idea in \cite{keel1998endpoint} is to treat certain dual estimate as bi-linear estimates rather than linear ones. 

The endpoint $(2,6)$, except its usual importance in deterministic problem, also has particular interest our multiplicative noise model. This is because from view point of Burkholder inequality, it is of interest to establish $L_{t}^{2}$ time bound for $V u dB_{t}$ (this is also why the finite quadratic variation condition is important in the work of \cite{HRZ}). We hope to be able to treat the end point in future works. 

\begin{remark}
One can see from the proof, in particular in the last section,  that both in the linear and nonlinear case, we can prove $u$  can be splitted into several parts, all satisfying an $L_{t}^{2}L_{x}^{6}$ part except for a term which is of form $e^{it\Delta}f(t)$, where $f(t)$ is a martingale in $L_{x}^{2}$ which converges to some $f\in L_{\omega}^{\infty}L_{x}^{2}$. We expect this may be of some help but it is not clear for us whether this enough to conclude the $L_{t}^{2}L_{x}^{6}$ bound.
\end{remark}

\subsection{Notations}

Throughout the article, we fix the potential $V \in \SSS(\R^3)$ and $\eps$ sufficiently small depending on $V$ (and also depending on $\|u_0\|_{L_x^2}$ in the nonlinear model). We will fix a small $\delta>0$ throughout the whole article, which appear in local smoothing type estimates. We write $S(t) = e^{it\Delta}$ for the propagator of the free Schr\"odinger equation, and $H(t)$ for the propagator of the linear damped equation; that is, $H(t) \psi_0$ solves the equation
\begin{equation*}
	i \d_t \psi + \Delta \psi = \frac{i \eps^2}{2} V^2 \psi\;, \qquad \psi(0,\cdot) = \psi_0\;.
\end{equation*}
Note that $H(t) = H_{\eps}(t)$ depends on the parameter $\eps$. But since $V$ and $\eps$ are fixed throughout the article, we will omit it in the notation and simply write $H(t)$. We also write $\scal{x} = (1+|x|^2)^{\f{1}{2}}$, and $p'$ for the dual index of $p \in (1,+\infty)$ in the sense that $\frac{1}{p^{'}}+\frac{1}{p}=1$. 

We write $A \lesssim B$ to say that there is a constant $C$ such that $A\leq CB$. We use $A \simeq B$ when $A \lesssim B \lesssim A $. Particularly, we write $A \lesssim_u B$ to express that $A\leq C(u)B$ for some constant $C(u)$ depending on $u$. Without special clarification, the implicit constant $C$ can vary from line to line.

\subsection{Structure of the paper}

The rest of the article is organized as follows. In Section~\ref{sec: prelim}, we state some preliminary results, including the classical Strichartz and local smoothing estimates for the free Schr\"odinger operator and the Burkholder inequality for martingales in proper Banach spaces. In Sections~\ref{sec: overview_linear} and~\ref{sec: overview_nonlinear}, we give an overview of the strategies in proving Theorems~\ref{thm: main} and~\ref{thm: main0}. Section~\ref{sec: md} is devoted to establishing properties of the damped Schr\"odinger operator $H$, including the corresponding dispersive, Strichartz, and local smoothing estimates. In Section~\ref{sec: linearmain}, these estimates for the damped operator are used to establish Theorem~\ref{thm: main}. Finally, in Section~\ref{sec: mm}, we prove the main nonlinear estimates and establish Theorem~\ref{thm: main0}.

\subsection*{Acknowledgement}

Fan was partially supported in NSFC grant No.11688101. Xu was partially supported by the National Key R\&D Program of China (No.2020YFA0712900). Zhao was partially supported by the NSF grant of China (No. 12101046) and the Beijing Institute of Technology Research Fund Program for Young Scholars.

\section{Preliminaries} \label{sec: prelim}

\subsection{Dispersive estimates  and Strichartz estimates for $e^{it\Delta_{\mathbb{R}^d}}$}
We state the standard decay estimate (also known as dispersive estimate) and Strichartz estimate for Schr\"odinger operator $e^{it\Delta_{\mathbb{R}^d}}$. We refer to \cite{cazenave2003semilinear,tao2006nonlinear} for details. 
\begin{lemma}[Dispersive estimate]
For the linear propagator $e^{it\Delta_{\mathbb{R}^d}}$ of Schr\"odinger equation in $\R^{d}$, one has 
\begin{equation}\label{eq: dispersive}
\|e^{it\Delta_{\mathbb{R}^d}}u_{0}\|_{L_{x}^{\infty}}\lesssim t^{-\frac{d}{2}}\|u_{0}\|_{L_{x}^{1}}.
\end{equation}
Moreover, by interpolation with the mass conservation, for $p\geq 2$, we have
\begin{equation}\label{lem: dispersive}
\|e^{it\Delta_{\mathbb{R}^d}}u_{0}\|_{L_{x}^{p}}\lesssim t^{-d(\frac{1}{2}-\frac{1}{p})}\|u_{0}\|_{L_{x}^{p'}}.
\end{equation}
\end{lemma}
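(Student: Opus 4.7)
The plan is the standard two-step argument: first obtain the $L^1 \to L^\infty$ dispersive endpoint by exhibiting the explicit convolution kernel of $e^{it\Delta}$, and then interpolate with the $L^2$ isometry to cover all $p \in [2,\infty]$.

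For the first step, I would write the propagator as convolution against the Schr\"odinger kernel
\[
K_t(x) = \frac{1}{(4\pi i t)^{d/2}}\, e^{i|x|^2/(4t)}.
\]
This expression is derived by analytic continuation from the heat kernel: replace $t$ by $t - i\tau$ with $\tau > 0$, where the Fourier inversion formula
\[
\bigl(e^{i(t-i\tau)\Delta}u_0\bigr)(x) = (2\pi)^{-d}\int_{\R^d} e^{ix\cdot\xi} e^{-i(t-i\tau)|\xi|^2}\hat u_0(\xi)\,d\xi
\]
is absolutely convergent thanks to the genuine Gaussian factor $e^{-\tau|\xi|^2}$, and then pass to the limit $\tau \to 0^+$ in the sense of tempered distributions. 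Since $|K_t(x)| = (4\pi|t|)^{-d/2}$ is constant in $x$, Young's convolution inequality immediately yields
\[
\|e^{it\Delta}u_0\|_{L^\infty} \le \|K_t\|_{L^\infty}\|u_0\|_{L^1} \lesssim |t|^{-d/2}\|u_0\|_{L^1},
\]
which is the first display in the statement.

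For the second step, I would apply the Riesz--Thorin interpolation theorem between this endpoint and the $L^2$ conservation law $\|e^{it\Delta}u_0\|_{L^2} = \|u_0\|_{L^2}$, which is itself immediate from Plancherel applied to the unimodular Fourier multiplier $e^{-it|\xi|^2}$. Choosing the interpolation parameter $\theta = 1 - 2/p$ places the source exponent at $p' = p/(p-1)$ and the target at $p$, and the resulting operator norm is bounded by
\[
1^{1-\theta} \cdot \bigl(C|t|^{-d/2}\bigr)^{\theta} = C^\theta\, |t|^{-d(1/2 - 1/p)},
\]
which is the exponent claimed in the second display.

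There is essentially no obstacle: the only point requiring care is the rigorous derivation of $K_t$, which is standard and is done by the $\tau \to 0^+$ limiting procedure sketched above. Both steps are routine and can be found in textbook references such as \cite{cazenave2003semilinear, tao2006nonlinear} already cited just above the lemma.
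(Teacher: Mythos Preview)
Your proposal is correct and is precisely the standard textbook argument; the paper itself does not supply a proof but simply refers to \cite{cazenave2003semilinear, tao2006nonlinear}, where exactly this derivation (explicit fundamental solution plus Riesz--Thorin with the $L^2$ isometry) is carried out. There is nothing to add.
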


\begin{lemma}[Strichartz estimate]\label{lem: strichartz} \label{Euclidean}
Suppose $\frac{2}{q}+\frac{d}{p}=\frac{d}{2}$, where $p,q \geq 2$ and $(q,p,d)\neq (2,\infty,2)$. We call such pair $(p,q)$ a Strichartz pair. Then 
\begin{equation}
    \|e^{it\Delta_{\mathbb{R}^d}}f\|_{L^q_tL^p_x(\mathbb{R}\times \mathbb{R}^d)} \lesssim \|f\|_{L^2(\mathbb{R}^d)}.
\end{equation}
Also, for any two Strichartz pairs $(p_1,q_1)$ and $(p_2,q_2)$, we have
\begin{equation}
    \Big\| \int_0^t e^{i(t-s)\Delta_{\mathbb{R}^d}}F(s)ds \Big\|_{L^{q_1}_tL^{p_1}_x(\mathbb{R}\times \mathbb{R}^d)} \lesssim \|F\|_{L^{q_{2}^{'}}_{t} L^{p_{2}^{'}}_x(\mathbb{R}\times \mathbb{R}^d)}.
\end{equation}
\end{lemma}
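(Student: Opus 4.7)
The plan is to execute the classical $TT^*$ argument together with the Keel--Tao endpoint analysis, taking the dispersive estimate \eqref{lem: dispersive} as the sole input. First I would reduce the homogeneous Strichartz inequality to the bilinear form
\begin{equation*}
\Big\| \int_{\mathbb{R}} e^{i(t-s)\Delta_{\mathbb{R}^d}} F(s,\cdot)\, ds \Big\|_{L^q_t L^p_x} \lesssim \|F\|_{L^{q'}_t L^{p'}_x},
\end{equation*}
which by $TT^*$ duality is equivalent to $\|e^{it\Delta_{\mathbb{R}^d}} f\|_{L^q_t L^p_x} \lesssim \|f\|_{L^2_x}$. Inserting \eqref{lem: dispersive} in the inner $L^p_x$-norm produces the convolution kernel $|t-s|^{-d(1/2-1/p)}$ acting in time, and the admissibility relation $\frac{2}{q}+\frac{d}{p}=\frac{d}{2}$ forces this exponent to equal $2/q$, exactly the critical value for Hardy--Littlewood--Sobolev in time. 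HLS then closes the bound whenever $q > 2$.

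For the endpoint $(q,p) = (2,\tfrac{2d}{d-2})$ with $d \geq 3$, which the hypothesis does allow (the statement only excludes the $d=2$ case $(q,p)=(2,\infty)$), the HLS step fails at the borderline, and I would invoke the Keel--Tao bilinear interpolation argument. The steps there are to decompose time into dyadic separations $|t-s|\simeq 2^j$, interpolate the dispersive bound against mass conservation on each scale to obtain off-diagonal bilinear estimates with quantitative decay in $j$, and then sum over $j$ using a bilinear real interpolation theorem. I expect this endpoint analysis to be the main technical obstacle; all non-endpoint cases are straightforward HLS.

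Finally, for the inhomogeneous estimate between two possibly distinct admissible pairs $(p_1,q_1)$ and $(p_2,q_2)$, I would first obtain the full-line bound
\begin{equation*}
\Big\| \int_{\mathbb{R}} e^{i(t-s)\Delta_{\mathbb{R}^d}} F(s)\, ds \Big\|_{L^{q_1}_t L^{p_1}_x} \lesssim \|F\|_{L^{q_2'}_t L^{p_2'}_x}
\end{equation*}
by composing the homogeneous Strichartz estimate for $(p_1,q_1)$ with the adjoint of the one for $(p_2,q_2)$. To pass to the retarded integral $\int_0^t$ as stated in the lemma, I would apply the Christ--Kiselev lemma, which requires the strict inequality $q_1 > q_2'$; this strict inequality holds for all admissible pairs except the double-endpoint configuration, and the latter is itself handled by the same Keel--Tao bilinear scheme without passing through Christ--Kiselev.
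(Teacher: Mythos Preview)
Your outline is correct and is exactly the standard route (dispersive estimate $\Rightarrow$ $TT^{*}$ $\Rightarrow$ Hardy--Littlewood--Sobolev for $q>2$, Keel--Tao bilinear interpolation at the endpoint, then Christ--Kiselev for the retarded inhomogeneous estimate with the double-endpoint handled inside Keel--Tao). The paper does not supply its own proof of this lemma; it simply cites the textbooks \cite{cazenave2003semilinear,tao2006nonlinear} and the Keel--Tao paper \cite{keel1998endpoint}, whose arguments are precisely the ones you sketched.
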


We fix $d=3$ throughout the article.

\subsection{Local smoothing estimates for $e^{it\Delta}$}

We state the standard local smoothing estimate as follows (see \cite{constantin1987effets,vega1988schrodinger,sjolin1987regularity} and also \cite{tao2006nonlinear} for details).
\begin{lemma}\label{lem: sls1}
\begin{equation}\label{eq: sls1}
\int \int \langle x \rangle^{-1-\delta}|\nabla^{1/2} e^{it\Delta}f|^{2} dx dt \lesssim_{\delta} \|f\|_{L_{x}^{2}}^{2}.
\end{equation}
\end{lemma}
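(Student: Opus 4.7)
The plan is to reduce the weighted estimate to the scale-invariant Kato local smoothing bound
\[
  \sup_{R\geq 1}\,R^{-1}\int_{\R}\int_{|x|\leq R}||\nabla|^{1/2}e^{it\Delta}f(x)|^{2}\,dx\,dt\lesssim\|f\|_{L^{2}_{x}}^{2},
\]
which is the main classical input. Given this, the rest is a purely spatial dyadic decomposition: split $\R^{3}$ into the unit ball and the annuli $A_{k}=\{2^{k}\leq|x|<2^{k+1}\}$ for $k\geq 0$. On $A_{k}$ the weight satisfies $\langle x\rangle^{-1-\delta}\simeq 2^{-k(1+\delta)}$, and the Kato bound with $R=2^{k+1}$ gives $\int_{\R}\int_{A_{k}}||\nabla|^{1/2}e^{it\Delta}f|^{2}\,dx\,dt\lesssim 2^{k}\|f\|_{L^{2}}^{2}$. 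The ball is handled by the $R=1$ case. Summing,
\[
  \int\int\langle x\rangle^{-1-\delta}||\nabla|^{1/2}e^{it\Delta}f|^{2}\,dx\,dt\lesssim\|f\|_{L^{2}}^{2}\Big(1+\sum_{k\geq 0}2^{-k\delta}\Big)\lesssim_{\delta}\|f\|_{L^{2}}^{2},
\]
which is the claim. The positivity of $\delta$ is exactly what makes the geometric sum converge.

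For the Kato estimate itself I would use a $TT^{*}$ argument combined with Plancherel in time. Since the time-Fourier transform of $e^{it\Delta}f$ is supported on the paraboloid $\tau+|\xi|^{2}=0$, Plancherel converts the left-hand side into a single integral over $\rho=\sqrt{-\tau}\geq 0$ of the square of the extension operator $Eg(y):=\int_{S^{2}}e^{iy\cdot\omega}g(\omega)\,d\sigma(\omega)$ applied to the spherical slice $g(\omega)=\hat f(\rho\omega)$, after changing variables $\xi=\rho\omega$ and $y=\rho x$. The $(-\Delta)^{1/4}$ factor on the left produces a $\rho^{1/2}$ that precisely cancels the $\rho$-scaling coming from this change of variables, so the task reduces to the uniform (in $\rho$) trace estimate
\[
  R^{-1}\int_{|y|\leq R}|Eg(y)|^{2}\,dy\lesssim\|g\|_{L^{2}(S^{2})}^{2},\qquad R\geq 1,
\]
on the unit sphere.

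The main obstacle is therefore this trace lemma on the unit sphere in $\R^{3}$, which is classical and can be proved either by direct kernel analysis (expanding $|Eg|^{2}$ as a double integral on $S^{2}\times S^{2}$ and exploiting the decay of the Fourier transform of $\mathbbm{1}_{|y|\leq R}$ away from the origin) or by duality with the Agmon--Kato--Kuroda restriction theorem. Once this estimate is granted, the remainder of the argument is a mechanical combination of Plancherel in time and the spatial dyadic summation described above, with $\delta>0$ providing the decisive summability margin.
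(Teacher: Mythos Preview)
Your argument is correct and follows one of the standard routes to the local smoothing estimate (essentially the Constantin--Saut/Sj\"olin/Vega approach via Plancherel in time and the spherical trace lemma). One small imprecision: after the change of variables $y=\rho x$, the ball $|x|\leq R$ becomes $|y|\leq \rho R$, so you actually need the trace inequality for \emph{all} radii, not only $R\geq 1$; but the small-radius case is trivial since $|Eg|\leq C\|g\|_{L^2(S^2)}$ pointwise, so this does not affect the validity of the argument.

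As for comparison with the paper: there is nothing to compare. The paper does not give a proof of this lemma at all---it is stated as a classical preliminary with references to \cite{constantin1987effets,vega1988schrodinger,sjolin1987regularity} and \cite{tao2006nonlinear}. Your sketch is precisely the kind of proof one finds in those sources.
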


Note that \eqref{eq: sls1} implies the associated estimate for the Duhamel part, via the standard $TT^{*}$ argument,
\begin{equation}\label{eq: sls1cor}
\Big\| \langle x \rangle^{-\frac{1}{2}-\delta}\nabla\int_{0}^{t}e^{i(t-s)\Delta}F (s) ds \Big\|_{L_{t}^{2}L_{x}^{2}}\lesssim_{\delta}\|\langle x \rangle^{\frac{1}{2}+\delta}F\|_{L_{t,x}^{2}}.
\end{equation}

Next, we state a pointwise version of local smoothing estimate. 

\begin{lemma}\label{lem: sls2}
	For every $R>0$ and $t \geq 1$, we have
\begin{equation}\label{eq: sls2}
\|\nabla e^{it\Delta}f\|_{L_{x}^{2}(B_{R})}\lesssim \frac{R}{t}\|\langle x \rangle f\|_{L_{x}^{2}}\;.
\end{equation}
\end{lemma}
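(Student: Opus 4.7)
The plan is to exploit the Galilean symmetry of the free Schr\"odinger equation through the operator $J(t) := x + 2it\nabla$, which commutes with $i\partial_t + \Delta$. This commutation yields the intertwining identity $J(t)\,e^{it\Delta} = e^{it\Delta}\circ x$, or equivalently
\begin{equation*}
x\,e^{it\Delta} f + 2it\,\nabla e^{it\Delta} f = e^{it\Delta}(xf).
\end{equation*}
Rearranging, one obtains
\begin{equation*}
\nabla e^{it\Delta} f = \frac{1}{2it}\bigl(e^{it\Delta}(xf) - x\,e^{it\Delta} f\bigr),
\end{equation*}
which is the core of the argument: the gradient is traded for a multiplication by $x$, at the cost of a decay factor $1/t$.

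From here I would take $L^2_x$ norms over the ball $B_R$ and estimate the two resulting terms separately. The first term is controlled by dropping the localization and invoking the $L^2$-unitarity of $e^{it\Delta}$: $\|e^{it\Delta}(xf)\|_{L^2(B_R)} \leq \|xf\|_{L^2} \leq \|\scal{x} f\|_{L^2}$. For the second, I use $|x| \leq R$ pointwise on $B_R$ together with unitarity: $\|x\,e^{it\Delta} f\|_{L^2(B_R)} \leq R\,\|f\|_{L^2}$. Combining these with the prefactor $1/(2t)$ produces
\begin{equation*}
\|\nabla e^{it\Delta} f\|_{L^2_x(B_R)} \lesssim \frac{1}{t}\bigl(\|xf\|_{L^2} + R\,\|f\|_{L^2}\bigr) \lesssim \frac{R+1}{t}\,\|\scal{x} f\|_{L^2},
\end{equation*}
which is the claimed bound in the regime $R \gtrsim 1$ after absorbing $R+1 \lesssim R$.

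There is no serious obstacle here: the identity from Galilean invariance is the only real ingredient, and the remainder of the argument is just the triangle inequality together with $L^2$-unitarity. The one point worth flagging is that this direct argument does not by itself recover the sharp $R/t$ dependence when $R \ll 1$; if one wished to cover that range, the natural fix would be to interpolate with a pointwise bound on $\nabla e^{it\Delta} f$ obtained via dispersive decay on the small ball. In practice, however, only the regime $R \gtrsim 1$ is invoked later in the paper (when performing dyadic decompositions in $|x|$), so the version above suffices.
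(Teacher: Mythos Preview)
Your proof is correct and follows essentially the same route as the paper, which simply cites the conservation of the conformal (pseudo-conformal) quantity $\|(x+2it\nabla)e^{it\Delta}f\|_{L^2_x}$; your intertwining identity $J(t)e^{it\Delta}=e^{it\Delta}x$ is exactly the mechanism behind that conservation. Your remark that the argument yields $(R+1)/t$ rather than $R/t$ is accurate, and you are right that only the regime $R\gtrsim 1$ is ever used downstream (the damped analogue, Lemma~\ref{lem: ls2}, is applied after multiplying by the Schwartz weight $V$ or $\langle x\rangle^{-10}$), so this is not a gap.
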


Lemma~\ref{lem: sls2} follows from the conservation of conformal energy $\|(x+it\nabla)u\|_{L_{x}^{2}}^{2}$. See \cite{tao2006nonlinear} for a detailed proof.

\subsection{GWP for mass critical nonlinear Schr\"odinger equations}

Consider defocusing mass critical NLS in $\mathbb{R}^{3}$,
\begin{equation}\label{eq: nls}
\begin{cases}
iu_{t}+\Delta u=|u|^{4/3}u,
u(0,x)=u_{0}\in L_{x}^{2}.
\end{cases}
\end{equation}

We recall the following seminal result by Dodson,
\begin{thm}\label{thm: dodson}
Let $u$ solves \eqref{eq: nls} with initial data $u_{0}\in L_{x}^{2}$, then $u$ is global with
\begin{equation}
\|u\|_{L_{t}^{2}L_{x}^{6}\cap L_{t}^{\infty}L_{x}^{2}}\lesssim C_{\|u_{0}\|_{L_{x}^{2}}}.	
\end{equation}

Furthermore, $u$ scatter to a linear (free Schr\"odinger) solution. 
\end{thm}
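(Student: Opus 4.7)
The plan is to follow Dodson's contradiction strategy, reducing the general-data problem to classifying and then excluding a putative minimal-mass obstruction. First I would set up the local theory: the Strichartz estimates of Lemma~\ref{lem: strichartz} give a contraction-mapping solution in $L_t^2 L_x^6$ on intervals determined by the linear profile of $u_0$, and small-data theory yields the global bound $\|u\|_{L_t^2 L_x^6} \lesssim \|u_0\|_{L_x^2}$ whenever $\|u_0\|_{L_x^2}$ is small. Define $A(m) = \sup\{\|u\|_{L_t^2 L_x^6}\}$ over all solutions of mass $m$, and set $m_* = \sup\{m : A(m) < \infty\}$. Small-data theory gives $m_* > 0$; the goal is to show $m_* = \infty$, since then $A(m) < \infty$ for all $m$, which is precisely the claimed space-time bound.

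Second, assuming $m_* < \infty$, I would apply Keraani's $L_x^2$ profile decomposition (together with inverse-Strichartz estimates) to a sequence of near-extremizers to extract a minimal-mass critical element $u_*$ of mass exactly $m_*$ whose $L_t^2 L_x^6$ norm blows up on both $[0,\infty)$ and $(-\infty,0]$. A Palais--Smale type compactness argument then forces the orbit $\{u_*(t)\}$ to be pre-compact in $L_x^2$ modulo the scaling and translation symmetries, i.e.\ there exist functions $(x(t), N(t))$ such that $N(t)^{-3/2} u_*\bigl(t, N(t)^{-1}(\cdot - x(t))\bigr)$ lies in a fixed compact subset of $L_x^2$. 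A further classification reduces the analysis to three canonical scenarios for $N(t)$: the soliton-like case $N(t) \equiv 1$, the self-similar case $N(t) \sim t^{-1/2}$, and the (double) rapid frequency-cascade case.

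The main obstacle, and the technical heart of Dodson's work, is ruling out all three scenarios. The key tools will be: (i) a long-time frequency-localized Strichartz estimate that propagates control of high-frequency portions of $u_*$ via the almost periodicity, and (ii) a frequency-localized interaction Morawetz inequality, specific to $d=3$ and to the defocusing sign. Iterating (i) gains additional regularity (roughly $\dot H^{1/2}$) in the self-similar and cascade profiles; combining this with mass conservation and the prescribed scaling of $N(t)$ contradicts the scaling invariance. For the soliton-like scenario, (ii) yields a uniform bound of the form $\int_0^T \int_{\R^3} |u_*|^3 \, dx\, dt \lesssim 1$ that is incompatible with the pointwise-in-time lower bound on local mass forced by $N(t) \equiv 1$ and mass $m_*$. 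This closes the contradiction, so $A(m) < \infty$ for every $m$.

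Finally, scattering is a direct corollary of the space-time bound: once $\|u\|_{L_t^2 L_x^6([0,\infty) \times \R^3)} < \infty$, dual Strichartz applied to the Duhamel formula shows that $\{e^{-it\Delta} u(t)\}_{t \geq 0}$ is Cauchy in $L_x^2$, producing a unique $u^+ \in L_x^2$ with $\|u(t) - e^{it\Delta} u^+\|_{L_x^2} \to 0$ as $t \to \infty$.
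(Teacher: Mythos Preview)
The paper does not prove this statement at all: Theorem~\ref{thm: dodson} is quoted as a preliminary result and immediately attributed to \cite{Dodson3} with the single line ``See \cite{Dodson3}.'' It is used throughout the article purely as a black box (most notably in Lemma~\ref{lem: hnlsglobal} and Proposition~\ref{prop: lcmdstability}), so there is no in-paper argument to compare against.

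Your outline is a reasonable high-level summary of the concentration-compactness/rigidity scheme that underlies Dodson's proof, and the final scattering paragraph is correct and standard. One caveat on accuracy: the trichotomy you list (soliton-like $N(t)\equiv 1$, self-similar $N(t)\sim t^{-1/2}$, double rapid cascade) is the Tao--Visan--Zhang classification from the earlier radial/partial results, not quite how \cite{Dodson3} is organized. Dodson instead normalizes to $N(t)\le 1$ (with $\int_I N(t)^{-1}\,dt$ as the relevant quantity) and treats the rigidity step in a more unified way, with the long-time Strichartz estimate and the frequency-localized interaction Morawetz doing the work simultaneously rather than case-by-case. If you intend this as a genuine sketch of \cite{Dodson3} rather than of the broader program, you should adjust the scenario breakdown accordingly; otherwise the outline is fine as a pointer to the literature, which is all the present paper requires.
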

See \cite{Dodson3}.

\subsection{The Burkholder inequality}
We will frequently use the following version of Burkholder inequality (for martingales in Banach space).
\begin{lemma}\label{lem: burk}
	Let $B$ be the standard Brownian motion. Let $\rho \in (1, +\infty)$, $p \in [2,+\infty)$, and $\Phi$ be an $L^p(\R^d)$-valued process adapted to the filtration generated by $B_t$. Then, we have
\begin{equation}\label{eq: burk}
\sup_{0 \leq a \leq b \leq t} \bigg\| \int_{a}^{b} \Phi(s) d B_{s} \bigg\|_{L_{\omega}^{\rho}L_{x}^{p}} \lesssim_{p,\rho} \bigg\| \int_{0}^{t} \|\Phi(s)\|_{L_{x}^{p}}^{2} d s \bigg\|_{L_{\omega}^{\rho/2}}^{1/2}\;.
\end{equation}
\end{lemma}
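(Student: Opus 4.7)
The plan is to obtain~\eqref{eq: burk} by invoking the Banach-space-valued Burkholder--Davis--Gundy inequality. The key structural input is that, for every $p \in [2, \infty)$, the space $L^p(\mathbb{R}^d)$ is a $2$-smooth (equivalently, martingale type $2$) Banach space, a classical consequence of Clarkson's inequality. For any such Banach space $X$ and any adapted $X$-valued process $\Phi$, the vector-valued BDG inequality asserts
$$\mathbb{E}\left\|\int_a^b \Phi(s)\, dB_s\right\|_X^\rho \lesssim_{X,\rho} \mathbb{E}\left(\int_a^b \|\Phi(s)\|_X^2\, ds\right)^{\rho/2}$$
for every $\rho \in (1, \infty)$ and every $0 \leq a \leq b$; see, e.g., Brzezniak's work on stochastic integration in $M$-type $2$ Banach spaces, or the monograph by van Neerven--Veraar--Weis.

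Applying this with $X = L_{x}^{p}(\mathbb{R}^3)$ and taking the $1/\rho$-th power, we get, for each deterministic pair $(a,b)$ with $0 \leq a \leq b \leq t$,
$$\left\|\int_a^b \Phi(s)\, dB_s\right\|_{L_{\omega}^{\rho} L_{x}^{p}} \lesssim_{p,\rho} \left\|\int_a^b \|\Phi(s)\|_{L_{x}^{p}}^2\, ds\right\|_{L_{\omega}^{\rho/2}}^{1/2}.$$
Since the integrand $\|\Phi(s)\|_{L_{x}^{p}}^2$ is non-negative, we have the pathwise bound $\int_a^b \|\Phi(s)\|_{L_{x}^{p}}^2\, ds \leq \int_0^t \|\Phi(s)\|_{L_{x}^{p}}^2\, ds$, so the $L_{\omega}^{\rho/2}$ norm on the right is uniformly dominated by its full-interval counterpart. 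Taking the supremum over $0 \leq a \leq b \leq t$ on the left then yields~\eqref{eq: burk}.

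The only delicate ingredient is the vector-valued BDG itself. A naive scalar approach that first applies the real-valued BDG pointwise in $x$ and then rearranges the resulting triple norm $L_{x}^{p} L_{\omega}^{\rho} L_{s}^{2}$ into the desired $L_{\omega}^{\rho} L_{s}^{2} L_{x}^{p}$ via Minkowski's integral inequality does not close: for $p \geq 2$ the Minkowski swap between $L_{x}^{p}$ and $L_{s}^{2}$ reduces the norm, whereas the swap between $L_{x}^{p}$ and $L_{\omega}^{\rho}$ goes in the opposite direction (whichever of $p, \rho$ is larger), and no uniform comparison is obtained. The $2$-smoothness of $L^p$ for $p \geq 2$ is therefore essentially indispensable. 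In the Hilbert case $p = 2$, one can bypass this machinery by first establishing the $\rho = 2$ bound via It\^o's isometry and then using Doob's maximal inequality to extend to general $\rho \in (1, \infty)$.
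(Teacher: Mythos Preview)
Your proposal is correct and follows the standard route: the paper does not give its own proof of this lemma but simply refers to the literature (\cite{BDG73, Bur73, B, BP}), and your argument via the $2$-smoothness of $L^p$ for $p\geq 2$ and the vector-valued Burkholder--Davis--Gundy inequality is exactly the mechanism those references establish. Your additional remark explaining why a naive pointwise-in-$x$ scalar BDG followed by Minkowski does not close is a helpful clarification beyond what the paper records.
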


We refer to \cite{BDG73, Bur73, B, BP} for proofs of the statement and also statements in more general situations.

\section{Overview of the proof of Theorem \ref{thm: main} and some reductions} \label{sec: overview_linear}

Though it is natural to write down the Duhamel formula for solution $u$ to \eqref{eq: linearmain} based on linear Schr\"odinger equations and view $u\circ dW_{t}$ as an input,
\begin{equation}\label{eq: Duhamel1}
u(t)=e^{it\Delta}u_{0}-i\int_{0}^{t}e^{i(t-s)\Delta}(Vu(s))dB_{s}-\frac{1}{2}\int_{0}^{t}e^{i(t-s)\Delta}V^{2}uds\;,
\end{equation}
it turns out, as observed in \cite{fan2020long}, it may be of advantage\footnote{Note that \cite{fan2020long} studies a nonlinear model with a slow decaying noise while now we are study a linear model, with noise does not decay in time.} to view \eqref{eq: linearmain} as a damped Schr\"odinger equation perturbed via the term $udW_{t}$.   Let $H(t-s)$ be the linear propagator for linear damped Schr\"odinger equation.
\begin{equation}\label{eq: dls}
iv_{t}+\Delta v=-i\eps^{2} V^{2}v
\end{equation}
i.e. $v:=H(t-s)g$ solves equation \eqref{eq: dls} with $v(s)=g$.

And we write down the Duhamel formula for $u$ as 

\begin{equation}\label{eq: Duhamel2}
u(t,x) = H(t)u_{0} - i \eps \int_{0}^{t}H(t-s)(V u(s))dB_{s}.
\end{equation}
For notational convenience, we write
\begin{equation}\label{eq: gamma}
(\Gamma u)(t):=-i\int_{0}^{t}H(t-s)(Vu(s))dB_{s},
\end{equation}
and \eqref{eq: Duhamel2} now reads
\begin{equation}
u(t) = H(t)u_{0}+\eps (\Gamma u)(t).
\end{equation}
We will focus on a priori estimates, and the desired estimate \eqref{eq: estimatemain} will follow via a standard continuity argument when $\eps$ is chosen small enough. Since Strichartz estimates for $H$ in \eqref{eq: strichartzh} well-handles the term $H(t)u_{0}$, we are left with the following proposition (as an a priori estimate).

\begin{proposition}\label{prop: linearmain}
When $\eps$ is chosen small enough. Let $u$ solve \eqref{eq: linearmain} with initial data $u_{0}\in L_{x}^{2}$, then one has 
\begin{equation}\label{eq: estimateprop}
\|\Gamma u(t)\|_{L_{\omega}^{\alpha}L_{t}^{\alpha}L_{x}^{\beta}(\Omega\times [0,\infty)\times \mathbb{R}^{3})}\lesssim \|u_{0}\|_{L_{x}^{2}}+\|u\|_{L_{\omega}^{\alpha}L_{t}^{\alpha}L_{x}^{\beta}(\Omega\times [0,\infty)\times \mathbb{R}^{3})}.
\end{equation}
\end{proposition}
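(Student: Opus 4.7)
The plan is to combine the Banach-valued Burkholder inequality (Lemma~\ref{lem: burk}) with the Strichartz and local smoothing estimates for the damped propagator $H$ developed in Section~\ref{sec: md}, exploiting that $V\in\SSS$ makes the stochastic forcing $Vu$ spatially localized. The first step is to apply Lemma~\ref{lem: burk} pointwise in $t$ with $\rho=\alpha,\,p=\beta$ (admissible as $\alpha\in(1,\infty)$ and $\beta>3\geq 2$), and then use Minkowski in $L^{\alpha/2}_\omega$ (valid since $\alpha\geq\f{7}{3}>2$) to pull the $s$-integral outside the expectation:
\begin{equation*}
	\|(\Gamma u)(t)\|_{L^\alpha_\omega L^\beta_x}\lesssim \Big(\int_0^t \|H(t-s)(Vu(s))\|^2_{L^\alpha_\omega L^\beta_x}\,ds\Big)^{1/2}\;.
\end{equation*}
Raising to the $\alpha$-th power and integrating in $t$, the problem reduces to controlling the $L^\alpha_t(L^2_s)$-norm of the deterministic kernel $h(t,s):=\|H(t-s)(Vu(s))\|_{L^\alpha_\omega L^\beta_x}$ by $\|u\|_{L^\alpha_\omega L^\alpha_t L^\beta_x}$.

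I would then split the $s$-integration at $|t-s|=1$. On the far region $\{t-s\geq 1\}$, the dispersive estimate for $H$ from Section~\ref{sec: md} combined with $\|Vf\|_{L^{\beta'}_x}\lesssim_V \|f\|_{L^\beta_x}$ gives $h(t,s)\lesssim (t-s)^{-\sigma_0}\|u(s)\|_{L^\alpha_\omega L^\beta_x}$ with $\sigma_0=\f{3}{2}-\f{3}{\beta}>\f{1}{2}$; since $2\sigma_0>1$ the kernel $\tau^{-2\sigma_0}\mathbf{1}_{\tau\geq 1}$ lies in $L^1_\tau$, so Young's convolution inequality delivers the far contribution in $L^\alpha_t$, the constraint $\alpha\geq\f{7}{3}$ keeping the relevant exponents in range. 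On the near region $\{|t-s|<1\}$ the kernel is not locally integrable, and I would invoke the local smoothing estimate for $H$ (Section~\ref{sec: md}): since $V\in\SSS$, the weighted norm $\|\scal{x}^{\f{1}{2}+\delta}Vu\|_{L^2_{t,x}}$ on any unit time slice is dominated by an unweighted $L^2_tL^p_x$-norm of $u$, and the dual local smoothing bound for $H$ together with the pathwise $L^\infty_tL^2_x$-bound (from $L^2$-contractivity of $H$ and Burkholder) plus interpolation upgrades this to $L^\alpha_tL^\beta_x$ control on each unit slice; summing over a unit-scale partition of $[0,\infty)$ closes the near-diagonal contribution.

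The chief obstacle is navigating around the forbidden endpoint pair $(2,6)$. After Burkholder one naturally produces an $L^2_s$-type structure, and matching it against $L^\alpha_tL^\beta_x$ with admissible $(\alpha,\beta)$ close to $(2,6)$ is tight: pure dispersive decay is non-integrable at short times because $2\sigma_0>1$, while a pure global Strichartz bound would require the missing endpoint. The interplay of local smoothing for $H$ and the Schwartz weight on $V$ is exactly what plugs this gap, but carrying the splicing globally in time (as opposed to on a finite interval as in previous stochastic Strichartz estimates) while keeping constants uniform in $\eps$ is the technical heart of the argument.
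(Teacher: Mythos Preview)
Your far-region argument ($|t-s|\geq 1$) is correct and matches the paper: Burkholder, then dispersive decay $(t-s)^{-2\sigma_0}$ with $2\sigma_0>1$, then Young. The near-region argument, however, has a genuine gap. After Burkholder you are left with $\int_{t-1}^{t}\|H(t-s)(Vu(s))\|_{L^\beta_x}^{2}\,ds$, and the only pointwise-in-$s$ bound available from $u\in L^\alpha_\omega L^\alpha_t L^\beta_x$ is the dispersive one, which produces the non-integrable $(t-s)^{-2\sigma_0}$. Your proposed fix---``dual local smoothing for $H$ together with the pathwise $L^\infty_t L^2_x$ bound plus interpolation''---does not apply here: the dual local smoothing estimate \eqref{eq: sls1cor} controls a \emph{deterministic} Duhamel integral $\int_0^t H(t-s)F\,ds$, not the $L^2_s$ structure that remains after Burkholder, and there is no mechanism to gain the fractional derivative $s_\alpha$ on $Vu(s)$ from the assumption $u\in L^\alpha_t L^\beta_x$ alone. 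The paper addresses precisely this at the end of Section~\ref{sec: overview_linear}, remarking that the heuristic estimate \eqref{eq: toy5} ``is not quite true'' because smoothing is only local.

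The missing idea is to \emph{expand $u$ once more} using the equation itself: write $u(s)=H(s)u_0+\eps(\Gamma u)(s)$ and substitute into $\Gamma u$, giving the two terms in \eqref{eq: gammaworking}. The point is that both resulting integrands now have the form $V\cdot H(\cdot)g$---either $VH(s)u_0$ or $A(s)=V\int_0^s H(s-r)Vu(r)\,dB_r$---and it is this structure, not a generic $Vu$, on which local smoothing (Lemmas~\ref{lem: ls1} and~\ref{lem: ls2}) yields the extra $s_\alpha=\frac{1}{\alpha}$ derivatives in $L^\alpha_t W^{s_\alpha,\tilde\beta'}_x$. With that gain one may run dispersive at the $\tilde\beta$ scale, where $\tilde\beta<3$ makes $(t-s)^{-6(\frac12-\frac{1}{\tilde\beta})}$ locally integrable, and Sobolev embedding $W^{s_\alpha,\tilde\beta}\hookrightarrow L^\beta$ closes the near part. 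This is the content of Lemmas~\ref{lem: linearesmin} and~\ref{lem: linearesmain}. Without the second expansion the near-diagonal contribution cannot be closed.
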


\begin{remark}
We point out that $H$ depends on $\eps$,  and our proof relies on some deterministic estimates for the propagator  $H$, which is only proved for small value of $\eps$ in this article\footnote{However,  we also note it is possible to establish the same estimates for more general $H$, and in particular does not require a smallness assumption, we don't fully explore this here.}.
\end{remark}

The proof of \eqref{eq: estimatemain} relies on the fact that $u$ is a solution to \eqref{eq: linearmain}. Indeed,  we need to do one more expansion of \eqref{eq: gamma} to get the expression
\begin{equation}\label{eq: gammaworking}
\begin{aligned}
\Gamma u =& -i \int_{0}^{t} H(t-s) (V H(s) u_{0}) dB_{s}\\
&+ \eps\int_{0}^{t} H(t-s) \left( V \int_{0}^{s} \big( H(s-r) V u(r) \big) dB_{r} \right) dB_{s}.
\end{aligned}
\end{equation}

\begin{remark}\label{rem: maxiaml}
Since all the terms in \eqref{eq: gammaworking} are  stochastic integrals and will be estimated with the help of Burkholder inequality, our estimate indeed gives a stronger version of \eqref{eq: estimateprop}
\begin{equation}
\bigg\| \sup_{a\leq b\leq t}\|\int_{a}^{b} H(t-s) (Vu(s)) dB_{s}\|_{L_{x}^{\beta}} \bigg\|_{L_{\omega}^{\alpha}L_{t}^{\alpha}(\Omega\times [0,\infty) )} \lesssim \|u_{0}\|_{L_{x}^{2}}+\|u\|_{L_{\omega}^{\alpha}L_{t}^{\alpha}L_{x}^{\beta}(\Omega\times [0,\infty)\times \mathbb{R}^{3})}.
\end{equation}
Such maximal type estimates play an important role in nonlinear problems (\cite{fan2018global, fan2019wong, fan2020long}).
\end{remark}

The first term in \eqref{eq: gammaworking} will be estimated by the following lemma. 

\begin{lemma}\label{lem: linearesmin}
When $\eps$ is small enough, one has 
\begin{equation}\label{eq: es0}
\Big\| \int_{0}^{t}H(t-s)(V H(s)u_{0})dB_{s} \Big\|_{L_{\omega}^{\alpha}L_{t}^{\alpha}L_{x}^{\beta}(\Omega\times [0,\infty)\times \mathbb{R}^{3})}\lesssim \|u_{0}\|_{L_{x}^{2}}.
\end{equation}		
\end{lemma}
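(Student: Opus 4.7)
The plan is to apply the Burkholder inequality to the stochastic integral in the $L^{\alpha}_{\omega}L^{\beta}_{x}$ norm at each fixed $t$, commute $L^{\alpha}_{t}$ past the resulting square function via Minkowski, and invoke Strichartz and local smoothing estimates for the damped propagator $H$ developed in Section~\ref{sec: md}.

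In more detail, write $\Psi(s) := V H(s) u_{0}$; since $u_{0}$ is deterministic and $H$ is a deterministic propagator, $\Psi$ is deterministic. By Lemma~\ref{lem: burk} with $\rho = \alpha \in (1,\infty)$ and $p = \beta \geq 2$, at each fixed $t$,
\begin{equation*}
\Big\|\int_{0}^{t} H(t-s)\Psi(s)\, dB_{s}\Big\|_{L^{\alpha}_{\omega}L^{\beta}_{x}} \lesssim \Big(\int_{0}^{t} \|H(t-s)\Psi(s)\|_{L^{\beta}_{x}}^{2}\, ds\Big)^{1/2}.
\end{equation*}
Because $\alpha/2 \geq 1$ (indeed $\alpha \geq 7/3$), Minkowski's inequality lets us commute the outer $L^{\alpha}_{t}$ norm past the square root, and then the Strichartz estimate for $H$ from Section~\ref{sec: md}, applied to $H(\cdot - s)\Psi(s)$ viewed as the $H$-propagation of the initial data $\Psi(s)$ from time $s$, reduces the target estimate to the deterministic bound
\begin{equation*}
\Big(\int_{0}^{\infty} \|V H(s) u_{0}\|_{L^{2}_{x}}^{2}\, ds\Big)^{1/2} \lesssim \|u_{0}\|_{L^{2}_{x}}.
\end{equation*}
Since $V \in \SSS(\R^{3})$ satisfies $|V(x)| \lesssim \langle x\rangle^{-1/2 - \delta}$, this amounts to a weighted local smoothing estimate $\int_{0}^{\infty}\int \langle x\rangle^{-1-2\delta}|H(s)u_{0}|^{2}\, dx\, ds \lesssim \|u_{0}\|_{L^{2}}^{2}$, which is exactly one of the outputs of Section~\ref{sec: md}.

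The main obstacle is this last local smoothing step. A short low-frequency wave packet analysis shows that the analogue without the $\nabla^{1/2}$ factor present in Kato's classical estimate (Lemma~\ref{lem: sls1}) actually fails for the free propagator $e^{it\Delta}$, so the argument cannot proceed by naive transfer from the free case. Instead one must exploit the damped structure of $H$ genuinely, most directly via the energy identity $\eps^{2}\int_{0}^{\infty}\|V H(s)u_{0}\|_{L^{2}}^{2}\, ds = \|u_{0}\|_{L^{2}}^{2} - \lim_{t\to\infty}\|H(t)u_{0}\|_{L^{2}}^{2}$, coming from $\tfrac{d}{dt}\|H(t)u_{0}\|_{L^{2}}^{2} = -\eps^{2}\|V H(t)u_{0}\|_{L^{2}}^{2}$. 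This yields the desired bound with an implicit constant that may depend on $\eps$, which is harmless because Lemma~\ref{lem: linearesmin} is used inside Proposition~\ref{prop: linearmain} with an $\eps$-prefactor from $u = H(t)u_{0} + \eps\,\Gamma u$, so any $\eps^{-1}$ loss is absorbed by the small parameter in the continuity argument.
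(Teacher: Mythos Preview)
Your argument is correct and takes a genuinely different route from the paper. The paper does not use the Minkowski-then-Strichartz reduction to $\|VH(\cdot)u_{0}\|_{L^{2}_{t}L^{2}_{x}}$; instead it splits the stochastic integral into a near-diagonal piece $|t-s|\le 1$ and a far piece $|t-s|\ge 1$. On the near piece it passes through Sobolev embedding $W^{s_{\alpha},\tilde\beta}_{x}\hookrightarrow L^{\beta}_{x}$ with $\tilde\beta<3$, applies the dispersive estimate for $H$ in $L^{\tilde\beta}_{x}$ (whose exponent $-3(\tfrac12-\tfrac1{\tilde\beta})$ is better than $-\tfrac12$, hence square-integrable), and closes by Young together with the genuine half-derivative local smoothing bound $\|VH(\cdot)u_{0}\|_{L^{\alpha}_{t}W^{s_{\alpha},\tilde\beta'}}\lesssim\|u_{0}\|_{L^{2}}$ from Lemma~\ref{lem: ls1}. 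On the far piece it uses the dispersive estimate for $H$ in $L^{\beta}_{x}$ directly, exploiting $\beta>3$ so that $(t-s)^{-6(1/2-1/\beta)}$ is integrable on $\{t-s\ge 1\}$, with input the Strichartz bound $\|VH(\cdot)u_{0}\|_{L^{\alpha}_{t}L^{\beta'}_{x}}\lesssim\|u_{0}\|_{L^{2}}$. Your approach is shorter, requires no near/far decomposition, and does not depend on the particular choice $(\alpha,\beta)=(4-,3+)$; the price is the $\eps^{-1}$ in the implicit constant, which---as you correctly note---is absorbed by the $\eps$-prefactor in $u=H(t)u_{0}+\eps\,\Gamma u$ when one runs the continuity argument for Theorem~\ref{thm: main}. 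The paper's approach, by contrast, yields an $\eps$-uniform constant, and more to the point its dispersive--Young machinery is exactly what gets recycled in the proofs of Lemma~\ref{lem: linearesmain} and the key deterministic Lemma~\ref{lem: pdfa} for the nonlinear model, where the input is a general $G\in L^{\alpha}_{t}\XXX$ rather than $H(\cdot)u_{0}$ and your damped-energy-identity shortcut is no longer available.
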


We will see later that the above lemma should be understood as the left hand side of \eqref{eq: es0} being controlled by free linear solution $H(t)u_{0}$.

To estimate the second term in \eqref{eq: gammaworking}, we introduce more notations. Let 
\begin{equation}
A(t):=V\int_{0}^{t}H(t-s) (Vu(s))dB_{s}. 
\end{equation}
Hence, we have
\begin{equation}
\int_{0}^{t}\int_{0}^{s} H(t-s) \Big( V H(s-r) \big( Vu(r) \big) \Big)dB_{r}dB_{s}=\int_{0}^{t}H(t-s)A(s)dB_{s}.
\end{equation}

The estimates for the second term in \eqref{eq: gammaworking} will be the material of the following lemma

\begin{lemma}\label{lem: linearesmain}
We have the following two estimates for $A(t)$: 
\begin{equation}\label{eq: lineara1}
\|A(t)\|_{L_{\omega}^{\alpha}L_{t}^{\alpha}W_{x}^{s_{\alpha},\tilde{\beta}'}(\domain)}\lesssim \|u\|_{L_{\omega}^{\alpha}L_{t}^{\alpha}L_{x}^{\beta}(\domain)}, 
\end{equation}
where $s_{\alpha}=\frac{1}{\alpha}$, $\frac{3}{\tilde{\beta}}=\frac{3}{2}-\alpha$, and
\begin{equation}\label{eq: lineara2}
\|A(t)\|_{L_{\omega}^{\alpha}L_{t}^{\alpha}L_{x}^{\beta'}(\domain)}\lesssim \|u\|_{L_{\omega}^{\alpha}L_{t}^{\alpha}L_{x}^{\beta}(\domain)}.
\end{equation}
Furthermore, we have the following two estimates
\begin{equation}\label{eq: linearlocalpart}
\bigg\| \int_{t-1\vee 0}^{t}H(t-s)A(s)dB_{s} \bigg\|_{\stnorm}\lesssim \|A(t)\|_{L_{\omega}^{\alpha}L_{t}^{\alpha}W_{x}^{s_{\alpha},\beta'}(\domain)}
\end{equation}
and 
\begin{equation}\label{eq: linearglobalpart}
\bigg\| \int_{0}^{t-1\vee 0}H(t-s)A(s)dB_{s} \bigg\|_{\stnorm}\lesssim \|A(t)\|_{L_{\omega}^{\alpha}L_{t}^{\alpha}L_{x}^{\beta'}(\domain)}.
\end{equation}
\end{lemma}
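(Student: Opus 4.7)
The plan is to exploit the structure $A(s) = V \cdot \Psi(s)$, where $\Psi(s) := \int_{0}^{s} H(s-r)(Vu(r))\,dB_r$ is a stochastic integral in an $L^p_x$ space, combined with three tools: (i) the Burkholder inequality in Banach-space form, which is legal in $L^p_x$ with $p \geq 2$; (ii) the Schwartz decay of the outer $V$, which supplies the spatial weight needed to activate local smoothing or to trade integrability via H\"older; and (iii) the Strichartz, dispersive and local smoothing estimates for the damped propagator $H$ established in Section~\ref{sec: md}, which are perturbatively close to those for $e^{it\Delta}$ provided $\eps$ is small.

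For \eqref{eq: lineara1}, I would apply Burkholder in the $\omega$ variable to $\Psi(s)$ in an $L^p_x$ space with $p \geq 2$, reducing matters to a deterministic bound for $\int_{0}^{s} \|H(s-r)(Vu(r))\|_{L^p_x}^{2}\,dr$ at each fixed $s$. The outer factor $V \in \SSS$ plays the role of the admissible spatial weight, so that applying a local-smoothing-flavored bound for $H$ produces the $\tfrac{1}{\alpha}$ gain of spatial derivative and the corresponding change of integrability exponent to $\tilde\beta'$. For \eqref{eq: lineara2} the obstacle is that $\beta' < 2$ prevents a direct use of Burkholder at the target exponent, so I would instead apply H\"older in $x$ to factor
\[
\|V \Psi(s)\|_{L^{\beta'}_x} \leq \|V\|_{L^{q}_x} \,\|\Psi(s)\|_{L^{\beta}_x},
\qquad \tfrac{1}{\beta'} = \tfrac{1}{q} + \tfrac{1}{\beta},
\]
using that $V$ is Schwartz, and then apply Burkholder to $\Psi(s)$ in the admissible space $L^{\beta}_x$ (where $\beta > 2$), followed by the Strichartz estimate for $H$ applied to $Vu$.

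For the local-in-time piece \eqref{eq: linearlocalpart}, I would apply Burkholder once more to $\int_{t-1\vee 0}^{t} H(t-s)A(s)\,dB_s$ over a time interval of length at most one, use the $W^{s_\alpha,\beta'} \hookrightarrow L^{\beta'}$ duality combined with the Strichartz estimate for $H$ (the factor $1/\alpha$ of regularity exactly matches the Sobolev embedding required to land in $L^{\beta}_x$), and absorb the time integration via H\"older in the bounded window. For the global piece \eqref{eq: linearglobalpart}, the key ingredient is the dispersive estimate $\|H(t-s)f\|_{L^\beta_x} \lesssim |t-s|^{-d(1/2-1/\beta)}\|f\|_{L^{\beta'}_x}$; applying Burkholder to the outer $dB_s$ integral and then estimating the quadratic variation via this kernel gives a convolution-in-time with $|t-s|^{-d(1/2-1/\beta)}$, which is integrable on $\{|t-s|\geq 1\}$ in the relevant range and yields the stated bound by Young/H\"older in $L^\alpha_t$.

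The main obstacle I anticipate is the careful bookkeeping between the regularity gain $s_\alpha = \tfrac{1}{\alpha}$ coming from local smoothing and the Sobolev embedding used to re-enter $L^\beta_x$, together with the fact that $\beta' < 2$ forbids applying Burkholder directly at the target integrability. The resolution in both cases is to use the Schwartz factor $V$ as an explicit spatial cut-off that lets us perform all stochastic estimates in an $L^p_x$ with $p \geq 2$ before paying for the exponent mismatch through H\"older against $V$.
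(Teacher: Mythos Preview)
Your overall architecture is right: pull out the Schwartz factor $V$, apply Burkholder in an $L^p_x$ with $p\geq 2$, and then feed in estimates for the damped propagator $H$. Part \eqref{eq: linearglobalpart} is exactly as you describe. But there is a recurring gap in the other three parts: you invoke the \emph{Strichartz} estimate for $H$ in places where, after Burkholder, what is actually needed is a \emph{pointwise-in-time} (dispersive or local smoothing) bound. Burkholder produces the quadratic quantity $\int_0^s \|H(s-r)(Vu(r))\|_Y^2\,dr$ at each fixed outer time $s$; a Strichartz bound only controls $\|H(\cdot)f\|_{L^\alpha_t L^\beta_x}$, which does not dominate this $L^2_r$-inner integral followed by $L^{\alpha/2}_s$ outside. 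The paper instead inserts the pointwise dispersive decay $\|H(s-r)g\|_{L^\beta_x}\lesssim |s-r|^{-3(1/2-1/\beta)}\|g\|_{L^{\beta'}_x}$ (or the $\tilde\beta$ version after Sobolev embedding) and closes by Young's inequality in time. In particular for \eqref{eq: lineara2} the split is: for $|s-r|\leq 1$ use mass conservation and the localisation of $V$ to get $\|VH(s-r)Vu(r)\|_{L^{\beta'}_x}\lesssim\|u(r)\|_{L^\beta_x}$; for $|s-r|\geq 1$ use the $L^\infty_x$ dispersive bound to get the kernel $(s-r)^{-3}$. Your ``Strichartz applied to $Vu$'' would stall here.

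For \eqref{eq: lineara1} there is a second, more substantive omission. The $s_\alpha$ derivative on $A$ must be produced for \emph{all} $|s-r|$, not just locally. For $|s-r|\leq 1$ your idea works: the integrated local smoothing Lemma~\ref{lem: ls1} (interpolated with mass to $L^\alpha_t$) gives the $W^{s_\alpha,\tilde\beta'}$ gain after exchanging the $s,r$ integrals. But for $|s-r|\geq 1$ you need a different input, namely the \emph{pointwise} local smoothing estimate (Lemma~\ref{lem: ls2}), which yields $\|\langle x\rangle^{-N}\nabla H(s-r)(Vu(r))\|_{L^2_x}\lesssim (s-r)^{-1}\|u(r)\|_{L^\beta_x}$ and hence an integrable kernel $(s-r)^{-2}$ after squaring. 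Without this pointwise decay the long-time contribution to the quadratic variation of $A$ in $W^{s_\alpha,\tilde\beta'}$ is not controlled. Similarly, for \eqref{eq: linearlocalpart} the mechanism is not ``H\"older in the bounded window'' but Sobolev embedding $W^{s_\alpha,\tilde\beta}\hookrightarrow L^\beta$ followed by the dispersive estimate at exponent $\tilde\beta$, producing the locally integrable kernel $(t-s)^{-6(1/2-1/\tilde\beta)}$ (integrable because $\tilde\beta<3$), and then Young's inequality in $t$.
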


We refer to Remark \ref{rem: num} for the natural appearance of $s_{\alpha}$ and $\tilde{\beta}$. We will prove Lemmas~\ref{lem: linearesmin} and~\ref{lem: linearesmain} in Section~\ref{sec: linearmain}.

We end this section with an explanation why the admissible pair $(4,3)$ is special in our analysis, and how local smoothing type estimates may help us handle $(4-,3+)$. It is very tempting to directly prove, based on \eqref{eq: gamma} and assumptions $u\in \stnorm$, that 
\begin{equation}\label{eq: toy1}
\int_{0}^{t} H(t-s) (Vu(s)) dB_{s} \in \stnorm.
\end{equation}
From the view point of Burkholder Inequality, one has 
\begin{equation}\label{eq: toy2}
\bigg\| \int_{0}^{t}H(t-s) (V u(s)) dB_{s} \bigg\|_{L_{x}^{\beta}}\sim \left(\int_{0}^{t}\|H(t-s)Vu\|^{2}_{L_{x}^{\beta}}\right)^{1/2}.
\end{equation}
Let us neglect the integrability issue in the $\omega$ variable for the moment, the question now becomes the following: for $F(t,x)\in L_{t}^{\alpha}L_{x}^{\beta}([0,\infty)\times \mathbb{R}^{3})$, can one prove
\begin{equation}\label{eq: toy3}
 \int_{0}^{t}\|H(t-s)VF\|^{2}_{L_{x}^{\beta}} ds \; \in L_{t}^{\alpha/2}([0,\infty))?
\end{equation}
Using the fact that $V$ is well localized and $H(t-s)$ satisfies the same dispersive estimate as $e^{it\Delta}$, Lemma \ref{lem: dispersiveh}, one may plug in
\begin{equation}\label{eq: toy4}
\|H(t-s)VF\|^{2}_{L_{x}^{\beta}} \lesssim t^{-\frac{6}{2}+\frac{6}{\beta}}\|F(t)\|_{L_{x}^{\beta}}^{2},
\end{equation}
into \eqref{eq: toy3}.

When $(\alpha,\beta)=(4,3)$, one get a logarithmic divergence since one is now estimating
\begin{equation}
\|\int_{0}^{t}(t-s)^{-1}\|F(s)\|_{L_{x}^{\beta}}^{2}\|_{L_{t}^{\alpha/2}}.
\end{equation}

Given $F\in L_{t}^{\alpha}L_{x}^{\beta}$, one observes that $(t-s)^{-1}$ has a divergence both at $s$ close to $t$ and $s$ far away with $t$.

By choosing $(\alpha, \beta)=(4-,3+)$, one overcomes the divergence for $s$ far away $t$, but paying the price that $(t-s)^{-3+\frac{6}{\beta}}$ is more singular at $|t-s|\ll 1$.

Now, the idea is, $F(s)$ is supposed to be like $H(s)u_{0}$, (at least at first approximation), and $V(x)H(s)u_{0}$ can be raised a half derivative via local smoothing argument, Lemma \ref{lem: ls1}, and one may try to estimate, schematically, for $s$ close to $t$
\begin{equation}\label{eq: toy5}
	\|H(t-s)VF\|^{2}_{W_{x}^{\mu_{\beta},\tilde{\beta}}}\lesssim (t-s)^{-4+\frac{6}{\tilde{\beta}}}\|F\|_{L_{x}^{\beta}}^{2}.
\end{equation}
This can solve the problem for non-integrability of $(t-s)^{-3+\frac{6}{\beta}}$ for $s$ close to $t$, as far as $\tilde{\beta}=3-$ and $W^{\mu_{\beta}, \tilde{\beta}}$ embeds $L_{x}^{\beta}$.

Estimate \eqref{eq: toy5} is not quite true\footnote{At least, we do not know how to prove it.}, (since one only obtain a \textbf{local} smoothing), thus we need to expand as in \eqref{eq: gammaworking} and some other technical problems will arise and need to be handled. But this is the main idea.

\section{Overview of the proof of Theorem \ref{thm: main0}} \label{sec: overview_nonlinear}

We now turn to an overview of the proof of our main result, Theorem \ref{thm: main0}.  We will mainly focus on \eqref{eq: estimatemain_nonlinear}. Given global space time bound, \eqref{eq: estimatemain_nonlinear}, 
the scattering behavior will follow by refining the analysis in \cite{fan2020long},  and  we will explain that in Section \ref{sec: scattering}.

 And we will explain how to recover all the other non-end point case, \eqref{eq: estimatemain_nonlinear222} in the last Section of this article.

We start with an explanation of overall strategy.
\subsection{The strategy}
Recall in the linear case, \eqref{eq: linearmain},  as discussed in the previous section, the goal is to estimate $\int_{0}^{t}H(t-s)VudB_{s}$. Via 
Duhamel Formula, we split $u$ into the summation of $H(t)u_{0}$ and $\int_{0}^{t}H(t-s)VudB_{s}$,  and estimate $\int_{0}^{t}H(t-s)VH(s)u_{0}dB_{s}$ and $\int_{0}^{t}H(t-s)\left(\int_{0}^{s}VH(s-r)dB_{r}\right)dB_{s}$ separately. 

In the nonlinear case, for solution $u$ to \eqref{eq: nonlinearmain}, following \cite{fan2018global}, \cite{fan2020long}, we want to estimate a maximal version of $\int_{0}^{t}H(t-s)VudB_{s}$ rather than itself. Recall Remark \ref{rem: maxiaml}, we do not worry too much to strengthen our estimates to maximal type.

It is now tempting to apply the Duhamel formula for the nonlinear equation, \eqref{eq: nonlinearmain}, and to split $u$ into three parts, $H(t)u_{0}$, $\int_{0}^{t}H(t-s)|u|^{4/3}u ds$, $\int_{0}^{t}H(t-s)VudB_{s}$, and  estimate them separately.  Such an approach, thorough natural, is not well suited for our problem. Indeed, \textbf{the key extra idea} here is, we will indeed split $u=u_{1}+u_{2}$ such that $u_{1}$ will behave like a linear solution, and in particular allows an application of local smoothing type estimate. And $u_{2}$ behaves as a maximal version of  $\int_{0}^{t}H(t-s)VudB_{s}$, so $u_{2}$ can also be estimated in a bootstrap scheme since we are working on a small noise case. We note such a decomposition\footnote{However, the property of such a decomposition was not fully exploited to suit the purpose of current article.} indeed already appear in \cite{fan2020long} for the study of scattering dynamic, Lemma 3.6, also implicitly appear in \cite{fan2018global} for a study of local theory. The existence of such a decomposition is not trivial and not that straightforward. We now go to more details.

\subsection{Main technical reductions and bootstrap scheme}
We will fix $V$ and $m_{0}:=\|u\|_{L_{x}^{2}}$. We will assume $\eps$ is small enough so that for any universal constant $C$, (which may implicitly depending on $m_{0}$ and $V$), $C\eps\ll 1$.  We also fix an admissible pair $(\alpha,\beta)=(4-,3+)$. We also define $s_{\alpha}=\frac{1}{\alpha}$,  $-s_{\alpha}+\frac{3}{\tilde{\beta}}=\frac{3}{\beta}$ based on considerations in Remark \ref{rem: num}.

Let $u$ be the solution to \eqref{eq: snls}, it is (of course) crucial that all the implicit constants involved in the analysis below cannot depend on $u$, (except that one has a priori, by pathwise mass conservation law, $\|u\|_{L_{\omega}^{\infty}L_{t}^{\infty}L_{x}^{2}}\leq \|u_{0}\|_{L_{x}^{2}}$.)

We will introduce, in the same spirit of \cite{fan2020long}, (which is a refinement of \cite{fan2018global}),
\begin{equation}\label{eq: maximal}
\begin{aligned}
M^{*}_{\eps}(t):= &\sup_{a\leq b\leq t} \Big\|\int_{a}^{b}H(t-s)\eps V(x)u(s)dB_{s} \Big\|_{L_{x}^{\beta}}\\
+&\sup_{a\leq b\leq t} \Big\|\int_{a}^{b}\langle x \rangle^{-100}H(t-s)\eps V(x)u(s)dB_{s} \Big\|_{W_{x}^{s_{\alpha},\tilde{\beta}}}.
\end{aligned}
\end{equation}
Compared with \cite{fan2020long}, there are two major difference.
We choose $(\alpha, \beta)=(4-,3+)$ rather than $(4+, 3-)$. We add a second part in $M^{*}$ from considerations on local smoothing estimate.\\

We will denote $M^{*}_{\eps}(t)$ as $M^{*}(t)$ whenever there is no confusion. Note that $M^{*}(t)$ is a random variable, depending on $\omega$, (as the solution $u$), though we usually don't write out the $\omega$ explicitly in this article. We know the following proposition from \cite{fan2020long}. 

\begin{proposition}\label{prop: nlmain1}
Let $u$ be the solution to \eqref{eq: snls}. Then one has
\begin{equation}
\|u\|_{L_{\omega}^{\alpha}L_{t}^{\alpha}L_{x}^{\beta}(\Omega\times [0,T]\times \mathbb{R}^{3})}\lesssim_{m_{0}} 1+\|M^{*}(t)\|_{L_{\omega}^{\alpha}L_{t}^{\alpha}([0,T]\times\mathbb{R}^{3})},	
\end{equation}
uniformly over $T>0$. 
\end{proposition}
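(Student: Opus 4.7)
The plan is to compare $u$ to the deterministic mass-critical NLS solution and to absorb the stochastic contribution into $M^{*}(t)$. Let $v$ be the solution of the deterministic defocusing mass-critical NLS on $\R^{3}$ with the same initial data $u_{0}$. Dodson's theorem (Theorem~\ref{thm: dodson}) gives $\|v\|_{L_{t}^{2}L_{x}^{6}} + \|v\|_{L_{t}^{\infty}L_{x}^{2}} \lesssim_{m_{0}} 1$, and interpolation together with Strichartz yields $\|v\|_{L_{t}^{\alpha}L_{x}^{\beta}(\R\times \R^{3})} \lesssim_{m_{0}} 1$ for our admissible pair $(\alpha,\beta)=(4-,3+)$. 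Fix a small universal $\eta>0$ to be chosen, and partition $[0,T]$ into $J=J(m_{0},\eta)$ subintervals $I_{j}=[t_{j},t_{j+1}]$ on each of which $\|v\|_{L_{t}^{\alpha}L_{x}^{\beta}(I_{j})}\leq \eta$.

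On each $I_{j}$ I would write $u=v+w$ and Duhamel expand $w$ against the damped propagator $H$ rather than $S$. Using \eqref{eq: nonlinearmain} and the equation for $v$, the integral equation for $w$ on $I_{j}$ reads, schematically,
\begin{equation*}
w(t) = H(t-t_{j})w(t_{j}) - i\!\int_{t_{j}}^{t} H(t-s)\bigl(|u|^{4/3}u - |v|^{4/3}v\bigr)ds + R_{\eps}(t) + \Gamma_{j}u(t),
\end{equation*}
where $R_{\eps}(t)$ collects the $O(\eps^{2})$ correction terms coming from $\frac{i\eps^{2}}{2}V^{2}v$ and the discrepancy $(S-H)v$, and $\Gamma_{j}u(t):=-i\eps\int_{t_{j}}^{t}H(t-s)Vu(s)\,dB_{s}$. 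By the Strichartz estimates for $H$ (Section~\ref{sec: md}) and H\"older, the nonlinear difference is controlled by $(\|u\|_{L_{t}^{\alpha}L_{x}^{\beta}(I_{j})}^{4/3}+\|v\|_{L_{t}^{\alpha}L_{x}^{\beta}(I_{j})}^{4/3})\|w\|_{L_{t}^{\alpha}L_{x}^{\beta}(I_{j})}$; the term $R_{\eps}$ is $O(\eps^{2})\lesssim_{m_{0}}\eps^{2}$; and the stochastic term is pointwise (in $t$) dominated by the first piece of $M^{*}(t)$ restricted to $I_{j}$.

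Taking $L_{\omega}^{\alpha}L_{t}^{\alpha}L_{x}^{\beta}(I_{j})$ norms, using that $\|v\|_{L_{t}^{\alpha}L_{x}^{\beta}(I_{j})}\leq \eta$ and a continuity argument to also keep $\|w\|_{L_{t}^{\alpha}L_{x}^{\beta}(I_{j})}$ small while $\eta$ and $\eps$ are small, the nonlinear contribution can be absorbed into the left-hand side, yielding
\begin{equation*}
\|w\|_{L_{\omega}^{\alpha}L_{t}^{\alpha}L_{x}^{\beta}(I_{j})} \leq C\|H(\cdot-t_{j})w(t_{j})\|_{L_{\omega}^{\alpha}L_{t}^{\alpha}L_{x}^{\beta}(I_{j})} + C\|M^{*}\|_{L_{\omega}^{\alpha}L_{t}^{\alpha}(I_{j})} + C\eps^{2}.
\end{equation*}
Translating the initial-data term back via Strichartz for $H$ (bounding $\|w(t_{j})\|_{L_{\omega}^{\alpha}L_{x}^{2}}$ using mass conservation and the bound obtained on $I_{j-1}$) and iterating over $j=0,\dots,J-1$, the total bound accumulates at most a factor depending on $J=J(m_{0})$, giving the claimed estimate $\|u\|_{L_{\omega}^{\alpha}L_{t}^{\alpha}L_{x}^{\beta}([0,T])}\lesssim_{m_{0}} 1+\|M^{*}\|_{L_{\omega}^{\alpha}L_{t}^{\alpha}([0,T])}$, uniformly in $T$.

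The main obstacle is the bootstrap over the $J$ intervals: each iteration produces multiplicative constants that in principle compound into $C^{J}$, so I would need to carefully exploit the smallness of $\eta$ (to absorb the nonlinear piece) and of $\eps$ (to absorb $R_{\eps}$ and the error from passing between $S$ and $H$) so that the per-step constant is genuinely $1+o(1)$ rather than a fixed large number, and to ensure the passage of the $L_{\omega}^{\alpha}$-norm of $w(t_{j})$ from one interval to the next does not lose information about $M^{*}$. This is precisely the mechanism already carried out in \cite{fan2020long} for the slowly decaying noise, and the only new ingredient here is that the local-smoothing half of $M^{*}(t)$ plays no role at this stage -- it will be used in the \emph{estimation} of $M^{*}$ itself in subsequent sections.
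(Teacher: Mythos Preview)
Your approach has a genuine gap at the absorption step. You partition $[0,T]$ based on the \emph{deterministic} profile $v$, so the number of intervals $J=J(m_{0},\eta)$ is fixed and $\omega$-independent. But on each $I_{j}$ the stochastic perturbation is only controlled by $\|M^{*}\|_{L_{t}^{\alpha}(I_{j})}$, which for a given $\omega$ may exceed any prescribed threshold. The stability argument that absorbs the nonlinear difference $|u|^{4/3}u-|v|^{4/3}v$ needs $(\|u\|_{L_{t}^{\alpha}L_{x}^{\beta}(I_{j})}^{4/3}+\|v\|_{L_{t}^{\alpha}L_{x}^{\beta}(I_{j})}^{4/3})\ll 1$, hence $\|w\|_{L_{t}^{\alpha}L_{x}^{\beta}(I_{j})}$ small. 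Yet your continuity bound has a floor of order $\|H(\cdot-t_{j})w(t_{j})\|_{L_{t}^{\alpha}L_{x}^{\beta}}+\|M^{*}\|_{L_{t}^{\alpha}(I_{j})}$: the first piece is $\lesssim m_{0}$ (for $j\geq 1$, $w(t_{j})=u(t_{j})-v(t_{j})$ is only $O(m_{0})$ in $L_{x}^{2}$, not small), and the second is uncontrolled. So the bootstrap hypothesis $\|w\|\ll 1$ cannot be propagated, and the nonlinear term cannot be absorbed. The issue you flag at the end, that constants compound as $C^{J}$, is secondary; the argument already fails on a single interval where $M^{*}$ is large.

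The paper (following \cite{fan2020long}) fixes this by reversing the roles: one partitions \emph{pathwise} based on $M^{*}$, not on $v$. For fixed $\omega$ with $\|M^{*}\|_{L_{t}^{\alpha}[0,T]}\sim A$, split $[0,T]$ into $J(\omega)\lesssim A^{\alpha}/e_{M}^{\alpha}+1$ intervals on each of which $\|M^{*}\|_{L_{t}^{\alpha}}\leq e_{M}$. On each interval one does not compare to a global $v$, but sets $u_{2}=\eta$ (the stochastic Duhamel term starting at $t_{j}$, so $\eta(t_{j})=0$ and $\|\eta\|_{L_{t}^{\alpha}L_{x}^{\beta}}\leq e_{M}$) and $u_{1}=u-\eta$; then $u_{1}$ satisfies a damped NLS with initial data $u(t_{j})$ and genuinely small forcing, and Proposition~\ref{prop: lcmdstability} (whose input is Dodson's theorem) gives $\|u_{1}\|_{L_{t}^{\alpha}L_{x}^{\beta}(I_{j})}\leq B_{M}$. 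Summing over the $J(\omega)$ intervals yields $\|u\|_{L_{t}^{\alpha}L_{x}^{\beta}[0,T]}^{\alpha}\lesssim J(\omega)B_{M}^{\alpha}\lesssim 1+\|M^{*}\|_{L_{t}^{\alpha}}^{\alpha}$ pathwise, and integration in $\omega$ finishes. The two features you are missing are: the comparison profile must be reset to the true data $u(t_{j})$ at each step, and the number of intervals must be allowed to depend on $\omega$ through the size of $M^{*}$.
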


One may refer to \cite{fan2020long}, but we will indeed prove a stronger version of Proposition~\ref{prop: nlmain1} in the current article (which also covers this Proposition); see Proposition~\ref{prop: nlmain2} below and also \eqref{eq: working}.

So the goal is to prove
\begin{equation}\label{eq: keysum}
\|M^{*}(t)\|_{L_{\omega}^{\alpha}L_{t}^{\alpha}(\Omega\times [0,\infty))}<\infty	
\end{equation}
when $\eps$ is small enough, and then Theorem \ref{thm: main0} will follow.

From view point of standard continuity and bootstrap argument we will focus on { a priori } estimates for global solution $u$ to \eqref{eq: snls}. (Strictly speaking one need to do bootstrap for all solution $u$ in $[0,T]$, prove some uniform estimates in $T$, and then extend $T$ to infinity, we will only focus on the limit case $T=\infty$ for clarity, and leave the other details for interested readers, but the arguments for $T$ are essentially same.)

We will prove, \textbf{assuming} $\|M^{*}(t)\|_{L_{\omega}^{\alpha}L_{t}^{\alpha}(\Omega\times [0,\infty))}$, that
\begin{equation}\label{eq: keyapriori}
	\|M^{*}(t)\|_{L_{\omega}^{\alpha}L_{t}^{\alpha}(\Omega\times [0,\infty))}\lesssim 1+\eps \|M^{*}(t)\|_{L_{\omega}^{\alpha}L_{t}^{\alpha}(\Omega\times [0,\infty))}.
\end{equation}
 Note that when $\eps$ is small enough, one derives the desired bootstrap type estimate
\begin{equation}
	\|M^{*}(t)\|_{L_{\omega}^{\alpha}L_{t}^{\alpha}(\Omega\times [0,\infty))}\lesssim 1.
\end{equation}
Strictly speaking one need to do bootstrap for all solution $u$ in $[0,T]$, prove some uniform estimates in $T$,  
and then extend $T$ to infinity. We will only focus on the limit case $T=\infty$ for clarity, and leave the other details for interested readers, but the arguments for general $T$ are essentially same as $T=\infty$.

We now focus on the proof of \eqref{eq: keyapriori}. We will assume the finiteness of $\|M^*(t)\|_{L_{\omega}^{\alpha}L_{t}^{\alpha}(\Omega \times [0,\infty))}$ in the whole article whenever we study the nonlinear problem.
 
For notation convenience, let us denote 
\begin{equation}\label{eq: xs}
 \|f\|_{X^{s_{\alpha},\tilde{\beta}'}}=\|\langle x \rangle^{-100}f\|_{W^{s_{\alpha},\tilde{\beta}'}}.	
\end{equation}
and 
\begin{equation}\label{eq: xxx}
\|f\|_{\XXX}:=\|f\|_{L_{x}^{\beta}}+\|f\|_{X^{s_{\alpha},\tilde{\beta}'}}.
\end{equation}
Via Burkholder inequality, one may estimate by
\begin{equation}
\begin{aligned}
&\|M^{*}(t)\|_{L_{\omega}^{\alpha}L_{t}^{\alpha}(\Omega\times[0,\infty))}\\
=&\|M^{*}\|_{L_{t}^{\alpha}L_{\omega}^{\alpha}(\Omega\times[0,\infty))}\\
\lesssim 
&\eps \|(\int_{0}^{t}\|H(t-s)Vu(s)\|_{X}^{2}ds)^{1/2}\|_{L_{t}^{\alpha}L_{\omega}^{\alpha}(\Omega\times[0,\infty))}\\
=&\eps \|(\int_{0}^{t}\|H(t-s)Vu(s)\|_{X}^{2}ds)^{1/2}\|_{L_{\omega}^{\alpha}L_{t}^{\alpha}(\Omega\times[0,\infty))}.
\end{aligned}
\end{equation}

Note that this $\eps$ is crucial for the close of our estimates, and will be chosen small enough finally.

And \eqref{eq: keyapriori} follows from the following two facts.

One is a purely deterministic functional analysis lemma.
\begin{lemma}\label{lem: pdfa}
One has the following estimate\footnote{As usual, we assume both side of the inequality is finite.},
\begin{equation}\label{eq: pdfa}
\|(\int_{0}^{t}\|H(t-s)VG(s)\|_{\XXX}^{2}ds)^{1/2}\|_{L_{t}^{\alpha}[0,\infty)}\lesssim \|G\|_{L_{t}^{\alpha}\XXX([0,\infty)\times \mathbb{R}^{3})}.
\end{equation}
\end{lemma}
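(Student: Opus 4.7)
The proof is a deterministic convolution estimate in time. I will establish a pointwise-in-time bound of the form
\[
\|H(t-s)VG(s)\|_{\XXX}^{2} \lesssim K(t-s)\,\|G(s)\|_{\XXX}^{2}
\]
with a kernel $K \in L^{1}(\mathbb{R}_{+})$, and then apply Young's convolution inequality (with exponent $\alpha/2$) and take a square root to conclude. The kernel will be built by splitting the time integral at the unit scale into a \emph{local piece} $|t-s|\le 1$, where the $X^{s_\alpha,\tilde\beta'}$ component of $\XXX$ is used on the input, and a \emph{global piece} $|t-s|\ge 1$, where the $L^{\beta}$ component of $\XXX$ is used.

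\textbf{Local piece.} The numerology $s_\alpha=1/\alpha$ and $3/\tilde\beta = 3/\beta + 1/\alpha$ is designed precisely so that Sobolev embedding gives $W^{s_\alpha,\tilde\beta}(\R^{3}) \hookrightarrow L^{\beta}(\R^{3})$, and the dispersive estimate for $H$ (Lemma on $H$ from Section~\ref{sec: md}) together with the admissibility $2/\alpha+3/\beta = 3/2$ yields $\|H(\tau)h\|_{L^{\tilde\beta}} \lesssim \tau^{-1/\alpha}\|h\|_{L^{\tilde\beta'}}$. The calculation now is: (i) $V$ is Schwartz, so multiplication by $V\langle x\rangle^{100}$ is bounded on $W^{s_\alpha,\tilde\beta'}$ and therefore $\|VG(s)\|_{W^{s_\alpha,\tilde\beta'}} \lesssim \|G(s)\|_{X^{s_\alpha,\tilde\beta'}}$; (ii) since $H$ is an $O(\eps^{2})$ perturbation of $e^{i\tau\Delta}$, commuting $\nabla^{s_\alpha}$ through $H$ upgrades the previous dispersive estimate to $\|H(\tau)h\|_{W^{s_\alpha,\tilde\beta}} \lesssim \tau^{-1/\alpha}\|h\|_{W^{s_\alpha,\tilde\beta'}}$; (iii) both parts of the output norm $\XXX$ are controlled by $W^{s_\alpha,\tilde\beta}$, the $L^{\beta}$ part via Sobolev embedding and the $X^{s_\alpha,\tilde\beta'}$ part via H\"older in the weight combined with a fractional Leibniz rule. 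Chaining these gives $\|H(\tau)VG(s)\|_{\XXX} \lesssim \tau^{-1/\alpha}\|G(s)\|_{\XXX}$ for $\tau\in(0,1]$, so the contribution to $K$ is $\tau^{-2/\alpha}\mathbf{1}_{[0,1]}$, which is in $L^{1}$ precisely because $\alpha>2$.

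\textbf{Global piece.} Here we use the plain dispersive estimate $\|H(\tau)h\|_{L^{\beta}} \lesssim \tau^{-3(1/2-1/\beta)}\|h\|_{L^{\beta'}} = \tau^{-2/\alpha}\|h\|_{L^{\beta'}}$, combined with $\|VG(s)\|_{L^{\beta'}} \lesssim \|G(s)\|_{L^{\beta}}$ (H\"older with $V\in\mathcal{S}$). This handles the $L^{\beta}$ component of the output. For the $X^{s_\alpha,\tilde\beta'}$ component we exploit the presence of the weight $\langle x\rangle^{-100}$: since $\tilde\beta' < \beta$, H\"older against a large power of $\langle x\rangle^{-1}$ converts $\|\langle x\rangle^{-100}f\|_{L^{\tilde\beta'}}$ into $\|f\|_{L^{\beta}}$; for the fractional derivative, a (weighted) Kato--Ponce/Leibniz estimate puts the $\nabla^{s_\alpha}$ either on $\langle x\rangle^{-100}$ (giving a Schwartz function that can again be absorbed into the weight) or on $H(\tau)VG(s)$ (yielding a form to which commutation with $H$ and the same dispersive argument apply). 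The upshot is $\|H(\tau)VG(s)\|_{\XXX} \lesssim \tau^{-2/\alpha}\|G(s)\|_{\XXX}$ for $\tau\ge 1$, contributing $\tau^{-4/\alpha}\mathbf{1}_{[1,\infty)}$ to $K$, which is in $L^{1}$ because $\alpha<4$.

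\textbf{Main obstacle.} The delicate step is the global piece applied to the Sobolev part of the $\XXX$-norm: a naive dispersive bound on $\|H(\tau)VG(s)\|_{W^{s_\alpha,\tilde\beta'}}$ only yields the slower rate $\tau^{-1/\alpha}$ (whose square is \emph{not} integrable at infinity for $\alpha\ge 2$), and the improvement to $\tau^{-2/\alpha}$ must come entirely from the localization $\langle x\rangle^{-100}$ in the definition of $X^{s_\alpha,\tilde\beta'}$. Making this localization interact cleanly with the fractional derivative $\nabla^{s_\alpha}$ (and with the damping term $V^{2}$ inside $H$, which must be treated perturbatively in $\eps$) is the main technical hurdle; once this is in place, the rest of the proof is Young's inequality in the exponent $\alpha/2$.
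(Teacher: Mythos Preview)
Your treatment of the $L_x^{\beta}$ component of the output norm matches the paper: split at $|t-s|=1$, use the $X^{s_\alpha,\tilde\beta'}$ part of the input together with Sobolev embedding and the dispersive rate $\tau^{-1/\alpha}$ on the short-time piece, and the $L^{\beta}$ part of the input with the rate $\tau^{-2/\alpha}$ on the long-time piece, then close by Young's inequality in $L^{\alpha/2}_t$.

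Where you diverge is in the $X^{s_\alpha,\tilde\beta'}$ component of the output. You insist on a pointwise kernel bound throughout, and this forces you to use a dispersive estimate for $H$ in fractional Sobolev spaces, namely $\|H(\tau)h\|_{W^{s_\alpha,p}} \lesssim \tau^{-3(1/2-1/p)}\|h\|_{W^{s_\alpha,p'}}$. That requires commuting $\nabla^{s_\alpha}$ through $H$; you call it an $O(\eps^2)$ perturbation but never carry it out. For $e^{i\tau\Delta}$ it is trivial, but the damping potential produces a commutator that must be controlled by its own bootstrap, and nothing in the paper supplies this. Your ``Main obstacle'' paragraph flags the global Sobolev piece, but the same gap already appears in step (ii) of your local piece.

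The paper avoids this commutation entirely by using the two local smoothing lemmas from Section~\ref{sec: md}. For the short-time $X^{s_\alpha,\tilde\beta'}$ output it does \emph{not} prove a pointwise kernel: it uses H\"older on the unit $s$-interval to go from $L^2_s$ to $L^\alpha_s$, then Fubini to swap $L^\alpha_t L^\alpha_s \to L^\alpha_s L^\alpha_t$, and finally the integrated local smoothing bound $\|H(\cdot)f\|_{L^\alpha_t X^{s_\alpha,\tilde\beta'}} \lesssim \|f\|_{L^2_x}$ (Lemma~\ref{lem: ls1} interpolated as in Remark~\ref{rem: num}) for each fixed $s$, with $\|VG(s)\|_{L^2_x}\lesssim\|G(s)\|_{L^\beta_x}$. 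For the long-time $X^{s_\alpha,\tilde\beta'}$ output it invokes the pointwise-in-time local smoothing Lemma~\ref{lem: ls2}, which gives the decay $\tau^{-1}$ (stronger than your $\tau^{-2/\alpha}$) with the derivative gain already built in, so no derivative ever has to pass through $H$.

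In short: your outline is plausible, but the dispersive-in-$W^{s_\alpha,p}$ step you lean on is an unproved lemma of its own. The paper's route through Lemmas~\ref{lem: ls1} and~\ref{lem: ls2} is what the machinery of Section~\ref{sec: md} was set up for, and it closes without any commutator analysis.
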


And another one is an enhancement of Prop \ref{prop: nlmain1},
\begin{proposition}\label{prop: nlmain2}
For any $[0,T]$, $T$ finite or equal to $\infty$. One can find a decomposition\footnote{This decomposition may depending on $T$, but it is (of course) crucial that all the implicit constants in the inequality does not depend on $T$.} of $u:=u_{1}+u_{2}$, such that
\begin{equation}\label{eq: controlu1}
\|u_{1}\|_{L_{\omega}^{\alpha}L_{t}^{\alpha}\XXX(\domaint)}\lesssim 1+\|M^{*}(t)\|_{L_{\omega,t}^{\alpha}(\Omega\times [0,T])}
\end{equation}
and, a.s. in $\omega$, 
\begin{equation} \label{eq: controlu2}
\|u_{2}(t)\|_{\XXX}\leq M^{*}(t).	
\end{equation}
\end{proposition}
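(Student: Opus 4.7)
The plan is to let the Duhamel formula for $u$, written via the damped propagator $H$ in It\^o form, dictate the splitting. Writing
\begin{equation*}
u(t) = H(t)u_0 - i\int_0^t H(t-s)|u|^{4/3}u(s)\,ds - i\eps\int_0^t H(t-s)V u(s)\,dB_s,
\end{equation*}
I would take $u_2(t) := -i\eps \int_0^t H(t-s) V u(s)\,dB_s$ and $u_1(t) := u(t) - u_2(t) = H(t)u_0 - i\int_0^t H(t-s)|u|^{4/3}u(s)\,ds$. With this choice, \eqref{eq: controlu2} is automatic: taking $a=0$, $b=t$ in the two suprema defining $M^{*}(t)$ in \eqref{eq: maximal}, the two summands of $\|u_2(t)\|_{\XXX}$ are exactly those two terms.

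The substantive work is then \eqref{eq: controlu1}, and I would estimate the two pieces of $u_1$ separately in the two components of $\|\cdot\|_{\XXX} = \|\cdot\|_{L_x^\beta} + \|\cdot\|_{X^{s_\alpha,\tilde\beta'}}$. For the linear piece $H(t)u_0$, the $L^\alpha_t L_x^\beta$ bound is the global Strichartz estimate for $H$ established in Section~\ref{sec: md}, while the $L_t^\alpha X^{s_\alpha,\tilde\beta'}$ bound is obtained by interpolating an $H$-analogue of the local smoothing estimate \eqref{eq: sls1} (gaining a half derivative in weighted $L_t^2$) against the Strichartz estimate for $H$ (no derivative gain but $L_t^\alpha$); the weight $\langle x \rangle^{-100}$ in $X^{s_\alpha,\tilde\beta'}$ easily absorbs the milder $\langle x \rangle^{-1/2-\delta}$ produced by local smoothing. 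For the nonlinear Duhamel integral, I would first invoke a perturbation-from-Dodson argument in the spirit of \cite{fan2018global,fan2020long} to obtain the a priori bound $\|u\|_{L^\alpha_{\omega,t} L_x^\beta} \lesssim_{m_0} 1 + \|M^{*}\|_{L^\alpha_{\omega,t}}$, which is Proposition~\ref{prop: nlmain1}. Combined with the pathwise mass conservation $\|u\|_{L^\infty_{\omega,t}L^2_x}\leq \|u_0\|_{L^2}$, this controls $|u|^{4/3}u$ in a suitable dual Strichartz/weighted space via H\"older. Applying the inhomogeneous Strichartz and the $TT^{*}$-type dual local smoothing (analogue of \eqref{eq: sls1cor}) for $H$ to this source term then yields the $L^\alpha_{\omega,t} \XXX$ bound on the nonlinear Duhamel.

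The main obstacle I anticipate is the $X^{s_\alpha,\tilde\beta'}$ bound on the nonlinear Duhamel term. Standard $TT^{*}$ local smoothing \eqref{eq: sls1cor} gives a full derivative gain but only in $L_t^2 L_x^2$, whereas $X^{s_\alpha,\tilde\beta'}$ asks for the fractional gain $s_\alpha < \tfrac12$ in $L_t^\alpha$ with spatial integrability $\tilde\beta'$. Interpolating between local smoothing and plain Strichartz for $H$ should give the correct Sobolev and time exponents, and the heavy weight $\langle x \rangle^{-100}$ dominates any weight produced by the interpolation; the delicate point is that one simultaneously has to keep the nonlinear source $|u|^{4/3}u$ in a Sobolev-\emph{dual} space, and the resulting Sobolev loss on the source must be compensated using only the information given by Proposition~\ref{prop: nlmain1} and mass conservation. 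A secondary subtlety is that the Dodson-based perturbation must be set up conditionally on the bootstrap hypothesis that $M^{*}(t)$ is finite in $L^\alpha_{\omega,t}$, rather than unconditionally; this I would handle via the stability theory for the mass-critical NLS, following \cite{fan2020long}.
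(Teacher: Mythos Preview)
Your splitting and the control of $u_2$ are fine, and the linear piece $H(t)u_0$ is handled exactly as you describe. The gap is in the nonlinear Duhamel piece of $u_1$. Bounding $\int_0^t H(t-s)|u|^{4/3}u\,ds$ in $L_t^\alpha L_x^\beta$ by inhomogeneous Strichartz requires $|u|^{4/3}u\in L_t^{a'}L_x^{b'}$ for an \emph{admissible} $(a,b)$, and the constraint $a\ge 2$ forces, by H\"older and the scaling identity $\tfrac{2}{\alpha}+\tfrac{3}{\beta}=\tfrac{3}{2}$, that at least two full powers of $\|u\|_{L_t^\alpha L_x^\beta}$ appear (the optimum is at $(a,b)=(2,6)$, giving $\||u|^{4/3}u\|_{L_t^2L_x^{6/5}}\lesssim\|u\|_{L_t^\infty L_x^2}^{1/3}\|u\|_{L_t^\alpha L_x^\beta}^{2}$). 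Feeding in Proposition~\ref{prop: nlmain1} then yields the pointwise-in-$\omega$ bound $\|u_1(\omega)\|_{L_t^\alpha\XXX}\lesssim (1+\|M^*(\omega)\|_{L_t^\alpha})^{2}$, hence after integrating in $\omega$ only $\|u_1\|_{L_{\omega,t}^\alpha\XXX}\lesssim 1+\|M^*\|_{L_\omega^{2\alpha}L_t^\alpha}^{2}$. This neither matches the linear bound \eqref{eq: controlu1} nor closes the bootstrap \eqref{eq: keyapriori}, which is set up at $\omega$-integrability $\alpha$.

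The paper avoids this loss by \emph{not} using a single Duhamel from time $0$. Instead, for each fixed $\omega$ it partitions $[0,T]$ into $J\lesssim 1+\|M^*(\omega)\|_{L_t^\alpha}^{\alpha}/e_M^{\alpha}$ intervals on which $\|M^*\|_{L_t^\alpha}\le e_M$, and on each $[t_j,t_{j+1}]$ sets $u_2=\eta(t):=-i\eps\int_{t_j}^{t}H(t-s)Vu\,dB_s$ and $u_1=u-\eta$. The point is that on such an interval $u_1$ solves a perturbed damped mass-critical NLS with initial data $u(t_j)$ of mass $\le M$ and perturbation $\eta$ small; the stability Proposition~\ref{prop: lcmdstability} then bounds $\|u_1\|_{L_t^\alpha\XXX([t_j,t_{j+1}])}$ by a constant $B_M$ depending only on mass, \emph{not} on any Strichartz norm of $u$. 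Summing $\alpha$-th powers over $J$ intervals gives $\|u_1(\omega)\|_{L_t^\alpha\XXX}\lesssim J^{1/\alpha}\lesssim 1+\|M^*(\omega)\|_{L_t^\alpha}$, which is linear and integrates to \eqref{eq: controlu1}. In short, the restarting from $u(t_j)$ together with Dodson-based stability is what produces the crucial linear dependence; a global Strichartz bound on the nonlinear Duhamel cannot.
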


\begin{remark}
Proposition~\ref{prop: nlmain2} could also be stated more concisely as
\begin{equation}\label{eq: working}
\|u\|_{L_{\omega}^{\alpha}L_{t}^{\alpha}\XXX(\domaint)}\lesssim 1+\|M^{*}(t)\|_{L_{\omega,t}^{\alpha}(\Omega\times [0,T])}.
\end{equation}
But we want to highlight this decomposition and hopefully it will play more role in the future study. And one will see in the proof the first part $u_{1}$ is understood by studying \eqref{eq: hnls} and its perturbation. And $u_{2}$ is estimated by the maximal object, $M^{*}$, itself.
\end{remark}

We will prove Lemma \ref{lem: pdfa} and Prop \ref{prop: nlmain2} in Section \ref{sec: mm}. The proof relies on the material in Section \ref{sec: md}, which deals with properties for damped linear and nonlinear Schr\"odinger equations. We end this section by proving \eqref{eq: keyapriori} assuming Lemma \ref{lem: pdfa} and Prop \ref{prop: nlmain2}.

\subsection{Proof of \eqref{eq: keyapriori} assuming Lemma \ref{lem: pdfa} and Prop \ref{prop: nlmain2}}
We first check that \eqref{eq: controlu1} and \eqref{eq: controlu2} implies \eqref{eq: working}. This follows from 
\begin{equation}
	\|u_{2}\|_{L_{\omega}^{\alpha}L_{t}^{\alpha}\XXX(\domaint)}\lesssim 1+\|M^{*}(t)\|_{L_{\omega,t}^{\alpha}(\Omega\times [0,T])}.
\end{equation}
But this is follows by taking $L_{\omega,t}^{\alpha}$ on both sides of \eqref{eq: controlu2}. Now, plug \eqref{eq: working} into \eqref{eq: pdfa}, and \eqref{eq: keyapriori} follows.

\section{Properties for damped model and modified stability} \label{sec: md}

In this section,  we present the needed properties for damped model. The proof of Theorem \ref{thm: main} only relies on the material in Subsection \ref{subsec: ls}. The proof of Theorem \ref{thm: main0} crucially relies on the material in Subsection \ref{subsec: ms}.   We remark that, except for Lemma \ref{lem: ls2}, the smallness assumptions on $\eps$ is unnecessary, and we conjecture one may remove this smallness assumption for this Lemma. However, the whole article depends on the smallness of $\eps$ anyway, unless one uses some extra idea. 
\subsection{Dispersive estimates and Strichartz estimates for damped model}\label{subsec: ls}
We start with the dispersive estimates and Strichartz estimates.  Let $H$ be the linear propagator to \eqref{eq: dls}.

Based on the important observation in \cite{JSS}, the term $i \eps V^{2}w$ can be treated in a perturbative way (see also \cite{Schlagsurvey} for a nice survey). We also note that for large potential $V$ (without $\eps$), the estimate also holds, see \cite{WXP}.
\begin{lemma}\label{lem: dispersiveh}
One has the same dispersive estimate as $e^{it\Delta}$ in the sense
\begin{equation}\label{eq: dispersiveh}
\|H(t)f\|_{L_{x}^{\infty}}\lesssim t^{-3/2}\|f\|_{L_{x}^{1}},  t>0
\end{equation}
and Riesz-Thorin interpolation gives
\begin{equation}\label{eq:dispersivehp}
\|H(t)f\|_{L_{x}^{p}}\lesssim t^{-3(\frac{1}{2}-\frac{1}{p})}\|f\|_{L_{x}^{p'}}, p\geq 2.
\end{equation}
\end{lemma}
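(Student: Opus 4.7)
The plan is to treat $\tfrac{i\eps^{2}}{2}V^{2}$ as a small perturbation of $-\Delta$ and apply the Journ\'e--Soffer--Sogge framework referenced in the excerpt. Starting from the Duhamel identity
\[
H(t)f \;=\; e^{it\Delta}f \;+\; c\eps^{2}\int_{0}^{t}e^{i(t-s)\Delta}V^{2}H(s)f\,ds
\]
(with $c$ a constant depending on the sign convention), I would iterate to produce the Born series $H(t)=\sum_{n\geq 0}(c\eps^{2})^{n}T_{n}(t)$ with iterated time-integral kernels
\[
T_{n}(t)=\int_{0<s_{1}<\cdots<s_{n}<t} e^{i(t-s_{n})\Delta}V^{2}e^{i(s_{n}-s_{n-1})\Delta}V^{2}\cdots V^{2}e^{is_{1}\Delta}\,ds_{1}\cdots ds_{n}.
\]
The $n=0$ term obeys \eqref{eq: dispersiveh} by \eqref{eq: dispersive}. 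The key step is the uniform-in-$n$ bound $\|T_{n}(t)\|_{L^{1}\to L^{\infty}}\leq C(V)^{n}\, t^{-3/2}$; once this is available, for $\eps$ small enough the series converges in operator norm and gives \eqref{eq: dispersiveh}.

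For each $T_{n}$, I would combine the full family of dispersive estimates \eqref{lem: dispersive} for $e^{it\Delta}$ at exponents $p\in[2,\infty]$ with the fact that $V^{2}\in L^{p}(\R^{3})$ for every $p$ (since $V\in\SSS(\R^{3})$). A naive use of $L^{1}\to L^{\infty}$ at every propagator would produce non-integrable singularities of the form $(s_{k+1}-s_{k})^{-3/2}$; these are cured by mixing exponents, using $L^{1}\to L^{\infty}$ on time increments bounded below, and an $L^{p'}\to L^{p}$ bound with $p<\infty$ (whose decay $t^{-3(1/2-1/p)}$ is integrable near zero) on small increments, with H\"older against $V^{2}$ handling the change of integrability between adjacent propagators. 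This is essentially the small-potential reduction of \cite{JSS}; the Schwartz decay of $V$ and the smallness of $\eps$ are used in an essential way to keep the constants stacking only geometrically in $n$.

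Once \eqref{eq: dispersiveh} is in hand, \eqref{eq:dispersivehp} follows from Riesz--Thorin interpolation between the $L^{1}\to L^{\infty}$ bound with decay $t^{-3/2}$ and the (uniform in $t$) $L^{2}\to L^{2}$ bound for $H(t)$, which is immediate once the damping sign of the $V^{2}$ term is used: a direct energy estimate gives $\tfrac{d}{dt}\|H(t)f\|_{L^{2}}^{2}\le 0$, so $\|H(t)\|_{L^{2}\to L^{2}}\le 1$ uniformly.

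The main technical obstacle is the uniform-in-$n$ control of the iterates $T_{n}(t)$: each single factor carries a non-integrable time singularity, and the exponent-mixing bookkeeping must be tight enough that the operator norm of $T_{n}$ grows only like $C(V)^{n}$, rather than $n!$ or worse, so that the series genuinely converges for small $\eps$. This is the technical heart of the JSS argument, simplified somewhat here by the smallness of $\eps$ and the Schwartz regularity of $V$.
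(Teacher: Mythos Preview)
Your proposal is correct and follows essentially the same approach the paper indicates: the paper gives no detailed proof here, simply remarking that the potential term can be treated perturbatively via the Journ\'e--Soffer--Sogge framework \cite{JSS}, which is precisely the Born-series-with-mixed-exponents argument you sketch. The paper additionally notes (citing \cite{WXP}) that the dispersive estimate in fact holds without the smallness assumption on $\eps$, but since $\eps$ is taken small throughout the article your perturbative version is entirely sufficient.
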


We also have the usual Strichartz estimate.

\begin{lemma}\label{lem: strichartzh}
For all $(q,r)$ admissible, one has 
\begin{equation}\label{eq: strichartzh}
\|H(t)f\|_{L_{t}^{q}L_{x}^{r}([0,\infty)\times \mathbb{R}^{3})}\lesssim \|f\|_{L_{x}^{2}},
\end{equation}
and
\begin{equation}\label{eq: strichartzh2}
\|\int_{a}^{t}H(t-s)F(s)ds\|_{S(a,\infty)}\lesssim \|F\|_{N(a,\infty)},
\end{equation}
where we recall $S$ is the Strichartz norm and $N$ is the dual Strichartz norm.
\end{lemma}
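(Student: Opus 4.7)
Since Lemma~\ref{lem: dispersiveh} already provides the dispersive decay for $H$ matching that of the free Schr\"odinger operator, in principle one could invoke the abstract Keel--Tao theorem directly. However, a cleaner route is a perturbative reduction to the free Strichartz estimates of Lemma~\ref{lem: strichartz}. Since $H$ and $e^{it\Delta}$ differ only by the low-order term involving $V^2$, Duhamel's formula gives
\begin{equation*}
	H(t)\psi_0 = e^{it\Delta}\psi_0 + c\eps^2 \int_0^t e^{i(t-s)\Delta}\, V^2\, H(s)\psi_0 \, ds,
\end{equation*}
for an explicit constant $c$ depending on the sign convention.

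First I would prove \eqref{eq: strichartzh} for the specific admissible pair $(q,r) = (2,6)$. Applying Lemma~\ref{lem: strichartz} to the two terms above, and choosing the dual admissible pair $(2, 6/5)$ for the forcing, yields
\begin{equation*}
	\|H\psi_0\|_{L^2_t L^6_x([0,\infty)\times\R^3)} \lesssim \|\psi_0\|_{L^2_x} + \eps^2 \, \|V^2\, H\psi_0\|_{L^2_t L^{6/5}_x}.
\end{equation*}
Since $V \in \SSS(\R^3)$, H\"older in space with $\tfrac{1}{6/5} = \tfrac{1}{3/2} + \tfrac{1}{6}$ produces
\begin{equation*}
	\|V^2\, H\psi_0(s)\|_{L^{6/5}_x} \leq \|V^2\|_{L^{3/2}_x} \, \|H\psi_0(s)\|_{L^6_x},
\end{equation*}
so the perturbation term is $\lesssim \eps^2 \|V^2\|_{L^{3/2}_x} \|H\psi_0\|_{L^2_t L^6_x}$. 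For $\eps$ sufficiently small depending on $V$, the second term can be absorbed into the left hand side, yielding $\|H\psi_0\|_{L^2_t L^6_x} \lesssim \|\psi_0\|_{L^2_x}$.

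With the $(2,6)$ bound established, the remaining admissible estimates follow by the same Duhamel expansion applied with the target pair $(q,r)$ on the left: the forcing $V^2 H\psi_0$ is again bounded in a convenient dual Strichartz norm via H\"older in space and the already-controlled $L^2_t L^6_x$ bound on $H\psi_0$. The inhomogeneous estimate \eqref{eq: strichartzh2} follows by the same structural strategy applied to $w(t) := \int_a^t H(t-s)F(s)\,ds$, which satisfies the damped equation with forcing $F$ and zero initial datum, hence admits the Duhamel expansion
\begin{equation*}
	w(t) = \int_a^t e^{i(t-s)\Delta} F(s)\, ds + c\eps^2 \int_a^t e^{i(t-s)\Delta}\, V^2\, w(s)\, ds,
\end{equation*}
to which the same absorption argument applies.

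The main subtlety is the a priori justification of the bootstrap: before transposing $\|H\psi_0\|_{L^2_t L^6_x}$ to the left hand side, one must know it is finite. This is standard, as $V^2$ is smooth and bounded so local well-posedness of the damped equation on any $[0,T]$ and Strichartz estimates on finite intervals follow immediately from the same perturbative reasoning on $[0,T]$; the absorption then produces a constant independent of $T$, and one concludes by sending $T \to \infty$.
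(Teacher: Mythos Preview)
Your proof is correct and matches the paper's approach. The paper does not spell out a proof of this lemma; it merely remarks (just before Lemma~\ref{lem: dispersiveh}) that the damping term $i\eps V^{2}w$ can be treated perturbatively in the spirit of \cite{JSS}, which is exactly your Duhamel-plus-absorption argument via the $(2,6)$ endpoint and H\"older in space. The one point worth noting is that the paper also records (citing \cite{WXP}) that the conclusion holds without the smallness assumption on $\eps$, whereas your argument uses smallness to close the bootstrap; since the whole article works in the small-$\eps$ regime anyway, this is harmless here.
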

We point out both Lemma \ref{lem: dispersiveh}, Lemma \ref{lem: strichartzh} hold without small assumptions on $\eps$.
\subsection{Local smoothing estimates for damped model}
Next, we present two local smoothing Lemma for the damped model. 
\begin{lemma}\label{lem: ls1}
For $\eps$ small enough, let $H$ be the linear propagator to \eqref{eq: dls}, one has 
\begin{equation}\label{eq: ls1}
\int_{0}^{\infty} \int \langle x \rangle^{-1-\delta}|\nabla^{1/2} H(t)f|^{2}\lesssim_{\delta} \|f\|_{L_{x}^{2}}^{2}.
\end{equation}
\end{lemma}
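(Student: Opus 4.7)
My plan is a perturbation argument off the free propagator. Using Duhamel's formula with base $e^{it\Delta}$,
\[
H(t)f = e^{it\Delta}f - \eps^2 \int_0^t e^{i(t-s)\Delta} V^{2} H(s)f\, ds.
\]
Writing $\|u\|_Y^{2} := \int_0^{\infty}\!\int \langle x\rangle^{-1-\delta}|\nabla^{1/2} u|^{2}\, dx\, dt$, the free part is bounded by $\|f\|_{L^2}$ thanks to Lemma \ref{lem: sls1}, so the task reduces to estimating the Duhamel integral in $Y$. For this, I would invoke the mixed inhomogeneous estimate
\[
\Big\|\int_0^t e^{i(t-s)\Delta} F(s)\, ds\Big\|_Y \lesssim \|F\|_{L^2_t L^{6/5}_x},
\]
which follows from the $TT^{*}$ paradigm once one observes that both the local smoothing of Lemma \ref{lem: sls1} and the endpoint Strichartz pair $(2,6)$ in $\mathbb{R}^{3}$ are admissible norms for $e^{it\Delta}$ in the Keel--Tao sense.

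Substituting $F = V^{2} H(s)f$ and splitting by H\"older gives
\[
\|V^{2} H(s)f\|_{L^2_s L^{6/5}_x} \leq \|V^{2}\|_{L^{3/2}_x}\, \|H(s)f\|_{L^2_s L^{6}_x} \lesssim_V \|f\|_{L^2},
\]
where the first factor is finite by the Schwartz decay of $V$ and the second by the endpoint Strichartz estimate for the damped propagator (Lemma \ref{lem: strichartzh}). Combining these yields $\|H(t)f\|_Y \lesssim (1 + \eps^{2} C_V)\|f\|_{L^2} \lesssim \|f\|_{L^2}$. An equivalent, slightly more self-contained route is to iterate Duhamel into a Dyson series in which every propagator is $e^{i\cdot\Delta}$ and each factor of $V^{2}$ contributes $\eps^{2}$; bounding the $n$-th term by $(\eps^{2} C_V)^{n}\|f\|_{L^2}$ (using the same two ingredients as above) and summing the geometric series uses the smallness of $\eps$ explicitly and matches the statement, while also accounting for the authors' remark that this smallness is likely not strictly necessary.

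The main obstacle I anticipate is making the mixed inhomogeneous estimate rigorous at the endpoint. If one wishes to bypass the Keel--Tao endpoint machinery, one should instead carry out a direct $TT^{*}$ computation starting from the symmetric form of Lemma \ref{lem: sls1}; converting the resulting bilinear identity into the retarded Duhamel bound is somewhat delicate because the $L^2_t$ index on both sides rules out a standard Christ--Kiselev reduction.
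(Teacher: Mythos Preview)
Your proposal is correct and matches the paper's own argument essentially line for line: the paper's proof of Lemma~\ref{lem: ls1} is the single sentence ``This follows from the Duhamel Formula, the (dual) local smoothing estimate, and the (endpoint) Strichartz estimate associated with $H$, and the fact that $V$ is localized in space,'' and your write-up is precisely an unpacking of these four ingredients. Your caveat about the Christ--Kiselev obstruction at the $L^2_t$--$L^2_t$ endpoint is well taken; the paper does not address it explicitly either, but the mixed retarded inhomogeneous estimate you need (local-smoothing output, dual endpoint Strichartz input) is standard and can be obtained, for instance, by the Keel--Tao bilinear mechanism or by interpolating the symmetric retarded local smoothing estimate~\eqref{eq: sls1cor} with the inhomogeneous Strichartz estimate.
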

This follows from the Duhamel Formula, the (dual) local smoothing estimate,  and the (endpoint) Strichartz estimate associated with $H$, and the fact that $V$ is localized in space.
\begin{lemma}\label{lem: ls2}
For $\eps$ small enough, let $H$ be the linear propagator to \eqref{eq: dls}, one has 
\begin{equation}\label{eq: ls2}
\|\nabla H(t)f\|_{L_{x}^{2}(B_{R})}\lesssim \frac{\langle R \rangle^{3/2}}{t}\|\langle x \rangle f\|_{L_{x}^{2}}, \quad t>1,
\end{equation}
\end{lemma}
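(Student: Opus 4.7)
The lemma is a damped analog of the pointwise pseudoconformal local smoothing bound Lemma~\ref{lem: sls2}. My plan is to treat $H(t)$ as an $O(\eps^2)$ perturbation of the free propagator $e^{it\Delta}$ via Duhamel, transfer the free conformal estimate to $H$ using the Galilean operator $L_j = x_j + 2it\partial_j$, and close the perturbative contributions by exploiting the smallness of $\eps$.

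I would begin from
\begin{equation*}
	H(t) f = e^{it\Delta} f - \tfrac{\eps^2}{2}\int_0^t e^{i(t-s)\Delta}V^2 H(s) f\,ds,
\end{equation*}
so that the principal piece $\nabla e^{it\Delta}f$ obeys $\|\nabla e^{it\Delta}f\|_{L^2_x(B_R)}\lesssim \frac{R}{t}\|\langle x\rangle f\|_{L^2}\lesssim \frac{\langle R\rangle^{3/2}}{t}\|\langle x\rangle f\|_{L^2}$ directly from Lemma~\ref{lem: sls2}. Rather than estimating $\nabla$ of the Duhamel correction pointwise, I would apply $L_j$, combine the free identity $L_j e^{i(t-s)\Delta}g = e^{i(t-s)\Delta}(x_j + 2is\partial_j)g$ with the commutator $[i\partial_t + \Delta - \tfrac{i\eps^2}{2}V^2,\, L_j] = \eps^2 t\,(\partial_j V^2)$ (acting as a multiplication operator), and obtain the damped Duhamel identity
\begin{equation*}
	L_j H(t) f = H(t)(x_j f) + i\eps^2 \int_0^t s\,H(t-s)\bigl((\partial_j V^2)H(s) f\bigr)\,ds.
\end{equation*}
Writing $2it\,\partial_j H(t)f = L_j H(t) f - x_j H(t) f$ and estimating on $L^2(B_R)$ reduces the target to controlling $\|H(t)(x_j f)\|_{L^2(B_R)}\lesssim \|\langle x\rangle f\|_{L^2}$, $\|x_j H(t) f\|_{L^2(B_R)}\leq R\|f\|_{L^2}$ (both immediate), and the $\eps^2$ integral, and then dividing by $2t$.

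For the $\eps^2$ integral I would split at $s = t-1$: on $s\in[t-1,t]$ use the $L^2\to L^2$ boundedness of $H$ together with $\|(\partial_j V^2)H(s)f\|_{L^2}\lesssim \|f\|_{L^2}$; on $s\in[0,t-1]$ use the dispersive estimate Lemma~\ref{lem: dispersiveh} in the form $\|H(t-s)g\|_{L^2(B_R)}\lesssim R^{3/2}(t-s)^{-3/2}\|g\|_{L^1}$ together with $\|(\partial_j V^2)H(s)f\|_{L^1}\lesssim \|f\|_{L^2}$ (valid since $\partial_j V^2\in L^2$). After dividing by $2t$, a naive bound here produces a correction of order $\eps^2\langle R\rangle^{3/2}\|f\|_{L^2}$, which is the main obstacle: it fails to decay as $t\to\infty$ and hence does not fit inside $\frac{\langle R\rangle^{3/2}}{t}\|\langle x\rangle f\|_{L^2}$.

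To overcome the obstacle I would iterate once more in the source: substitute $H(s)f = e^{is\Delta}f + O(\eps^2)$ into the integrand, so that the principal part of the source becomes $(\partial_j V^2) e^{is\Delta}f$. A dyadic decomposition of $\partial_j V^2$ along $\{|x|\sim 2^k\}$ together with Lemma~\ref{lem: sls2} on each dyadic piece and the Schwartz decay of $\partial_j V^2$ supplies a local decay $\sim 1/s$ that, after integration against the weight $s$, yields the missing $1/t$ factor. The remaining $\eps^4$ double integral is then absorbed by a bootstrap in
\begin{equation*}
	N(T) := \sup_{1<\tau\leq T,\,R>0}\frac{\tau\,\|\nabla H(\tau) f\|_{L^2_x(B_R)}}{\langle R\rangle^{3/2}\,\|\langle x\rangle f\|_{L^2}},
\end{equation*}
leading to a self-improving inequality $N(T)\lesssim 1 + \eps^2 N(T)$ uniformly in $T$, which closes for $\eps$ sufficiently small. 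The principal technical work therefore lies in arranging this double Duhamel so that every error term either acquires the $1/t$ decay via Lemma~\ref{lem: sls2} applied to a free propagator or carries a prefactor $\eps^2$ available for the bootstrap.
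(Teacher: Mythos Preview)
Your approach via the Galilean operator $L_j=x_j+2it\partial_j$ is a genuinely different route from the paper's. The paper takes $\nabla$ directly through the Duhamel formula $H(t)f=e^{it\Delta}f-\tfrac{\eps^2}{2}\int_0^t e^{i(t-s)\Delta}V^2H(s)f\,ds$ and bootstraps the quantity $\|\nabla H(t)f\|_{L^2(B_R)}$ itself: since $\nabla(V^2H(s)f)=(\nabla V^2)H(s)f+V^2\nabla H(s)f$, the second summand is exactly the bootstrap hypothesis localized by the Schwartz weight $V^2$, and the argument closes in a single pass (splitting the $s$-integral into $[0,1]$, $[1,t-1]$, $[t-1,t]$ and using Lemma~\ref{lem: sls2}, the dispersive estimate, and an $H^1$ bound respectively) without ever iterating Duhamel a second time.

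Your commutator identity produces instead the source $(\partial_j V^2)H(s)f$, which carries \emph{no} derivative on $H(s)f$ and therefore does not feed back into your bootstrap quantity $N(T)$. You try to recover the missing decay by substituting $H(s)f=e^{is\Delta}f+O(\eps^2)$ and invoking Lemma~\ref{lem: sls2} on dyadic shells to obtain a $1/s$ bound for $(\partial_j V^2)e^{is\Delta}f$. This step is wrong as written: Lemma~\ref{lem: sls2} controls $\nabla e^{is\Delta}f$ on $B_R$, not $e^{is\Delta}f$ itself, and no dyadic summation converts a gradient bound into an undifferentiated one. The estimate you actually need is the pseudoconformal $L^6$ decay $\|e^{is\Delta}f\|_{L^6}\lesssim s^{-1}\|xf\|_{L^2}$ (Gagliardo--Nirenberg applied to $e^{-i|x|^2/4s}e^{is\Delta}f$; same conformal-energy origin as Lemma~\ref{lem: sls2} but a distinct inequality), which indeed yields $\|(\partial_j V^2)e^{is\Delta}f\|_{L^1\cap L^2}\lesssim s^{-1}\|\langle x\rangle f\|_{L^2}$. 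With that correction your principal-part computation goes through, but the $\eps^4$ remainder still contains an undifferentiated $H(r)f$ inside a double Duhamel integral, and you do not explain how $N(T)$ --- a bound on $\nabla H$ --- is extracted from it. The paper's direct-$\nabla$ bootstrap never meets this obstacle because the derivative survives in the integrand and couples straight back to the hypothesis.
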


 Lemma \ref{lem: ls2} will be obtained based the same estimates for linear Schr"odinger equation in a perturbative way.  Lemma \ref{lem: ls2} is probably not optimal but suffice for the purpose of this article.

\subsubsection{Proof of Lemma \ref{lem: ls2}}
With \eqref{eq: sls2}, we only need to prove, (from a standard bootstrap view point and using the fact that $\eps$ is small),
\begin{lemma}\label{lem: miniboot}
Assuming 
\begin{equation}\label{eq: bamini}
\|\nabla H(t)f\|_{L_{x}^{2}(B_{R})}\leq \frac{C\langle R \rangle^{3/2}}{t}\|\langle x \rangle f\|_{L_{x}^{2}},
\end{equation}
then one has, for $\tilde{C}$ only depending on $C$, such that
\begin{equation}
\|\nabla\int e^{i(t-s)\Delta}V^{2}H(s)f\|_{L_{x}^{2}(B_{R})}\leq \tilde{C}\frac{\langle R \rangle^{3/2}}{t}\|\langle x \rangle f\|_{L_{x}^{2}}.
\end{equation}

\end{lemma}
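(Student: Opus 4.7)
The plan is to estimate the Duhamel correction
\begin{equation*}
\Psi(t) := \int_0^t e^{i(t-s)\Delta}\bigl(V^2 H(s) f\bigr)\,ds
\end{equation*}
by splitting the time integral into an early region $s \in [0, t/2]$ and a late region $s \in [t/2, t]$ and treating each with a different tool. One should avoid the circular identity $\Psi = \eps^{-2}(e^{it\Delta} f - H(t) f)$ coming from the Duhamel formula itself, since it yields an unusable $\eps^{-2}$ blow-up. The whole point of the term-wise approach is that the resulting constant $\tilde C$ depends on the bootstrap constant $C$ in \eqref{eq: bamini} but not on $\eps$; this $\eps$-independence is what allows the outer bootstrap in Lemma~\ref{lem: ls2} to close for $\eps$ small.

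On the early region, where $t - s \gtrsim t$, I would commute $\nabla$ past the free propagator and apply the $L^1 \to L^\infty$ dispersive estimate $\|e^{i(t-s)\Delta} h\|_{L^\infty} \lesssim (t-s)^{-3/2} \|h\|_{L^1}$; passing to $L^2(B_R)$ via $L^\infty(B_R) \hookrightarrow L^2(B_R)$ at the cost of $|B_R|^{1/2} \lesssim \langle R\rangle^{3/2}$ is the origin of the $\langle R\rangle^{3/2}$ factor in the target. The $L^1$-norm $\|\nabla(V^2 H(s) f)\|_{L^1}$ splits into $(\nabla V^2) H(s) f$ and $V^2 \nabla H(s) f$; the second piece is controlled using a dyadic annular decomposition $A_k = B_{2^k}\setminus B_{2^{k-1}}$, for which $\|V^2\|_{L^2(A_k)}$ decays rapidly by Schwartz and $\|\nabla H(s) f\|_{L^2(A_k)} \leq C\cdot 2^{3k/2}/s \cdot \|\langle x\rangle f\|_{L^2}$ by \eqref{eq: bamini}, summing to $O(C/s)\|\langle x\rangle f\|_{L^2}$. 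On the late region, where $s \sim t$, I would apply Lemma~\ref{lem: sls2} to the free propagator $e^{i(t-s)\Delta}$ when $t-s \geq 1$, and use $L^2$-unitarity of $e^{i(t-s)\Delta}$ for the short window $t-s \in [0, 1]$, combining in each case with the same dyadic-annulus bound.

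The hard part will be the term $(\nabla V^2) H(s) f$: the bootstrap hypothesis \eqref{eq: bamini} controls $\nabla H(s) f$ only and gives no direct information on $H(s) f$ itself, so the trivial bound $\|(\nabla V^2) H(s) f\|_{L^1} \lesssim \|f\|_{L^2}$ carries no $s$-decay and will not integrate against the $(t-s)^{-3/2}$ kernel in a way that produces the desired $1/t$ factor. To extract the needed decay I would invoke the $L^p$-dispersive estimate for $H$ from Lemma~\ref{lem: dispersiveh}, namely $\|H(s) f\|_{L^p} \lesssim s^{-3(1/2-1/p)}\|f\|_{L^{p'}}$, and choose $p \in (2, 6)$ so that H\"older gives $\|f\|_{L^{p'}} \lesssim \|\langle x\rangle f\|_{L^2}$ via $\langle x\rangle^{-1} \in L^r(\R^3)$ for some finite $r$. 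Taking, for instance, $p = 4$ and $p' = 4/3$ yields $\|(\nabla V^2) H(s) f\|_{L^1} \lesssim s^{-3/4}\|\langle x\rangle f\|_{L^2}$, and combining with the $(t-s)^{-3/2}$ kernel and integrating over the early region produces a contribution of order $t^{-1}$, matching the target up to a constant depending on $C$ but not on $\eps$. The surplus slack in this exponent balancing is presumably what the authors allude to in their remark that \eqref{eq: ls2} is ``probably not optimal.''
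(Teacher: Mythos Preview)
Your overall strategy---split the time integral, use the $L^1\to L^\infty$ dispersive estimate for $e^{i(t-s)\Delta}$, and feed in the bootstrap bound on $\nabla H(s)f$ via a dyadic-in-space argument---matches the paper's, and your explicit handling of the commutator piece $(\nabla V^2)H(s)f$ via Lemma~\ref{lem: dispersiveh} is in fact more careful than the paper's own sketch. But two steps in your plan do not close as written.

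First, on the early region $s\in[0,t/2]$ your dyadic bound gives $\|V^2\nabla H(s)f\|_{L^1}=O(C/s)$, and integrating this against $(t-s)^{-3/2}\sim t^{-3/2}$ produces $t^{-3/2}\int_0^{t/2}s^{-1}\,ds$, which diverges at $s=0$; the bootstrap hypothesis \eqref{eq: bamini} is vacuous for small $s$, and indeed $\nabla H(s)f$ need not be locally $L^2$ when $f$ is merely $L^2$. The paper fixes this by carving out $s\in[0,1]$ and applying Lemma~\ref{lem: sls2} to the free propagator \emph{without} commuting $\nabla$ inside: since $t-s\sim t$ there, $\|\nabla e^{i(t-s)\Delta}(V^2 H(s)f)\|_{L^2(B_R)}\lesssim\frac{R}{t}\|\langle x\rangle V^2 H(s)f\|_{L^2}\lesssim\frac{R}{t}\|f\|_{L^2}$, which integrates harmlessly. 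Second, on your late region for $t-s\geq 1$ you invoke Lemma~\ref{lem: sls2}, but that lemma places the gradient on the output and needs $\|\langle x\rangle V^2 H(s)f\|_{L^2}$ on the right---a quantity with no $\nabla H(s)f$ in it, so your ``same dyadic-annulus bound'' is not applicable. Even granting $\|\langle x\rangle V^2 H(s)f\|_{L^2}\lesssim s^{-1}\|\langle x\rangle f\|_{L^2}$ via dispersive for $H$, the resulting $\frac{R}{(t-s)s}$ over $[t/2,t-1]$ leaves a $\log t$ loss. The remedy is to keep using the dispersive estimate (i.e., your early-region method) on this piece as well, so $(t-s)^{-3/2}$ replaces $(t-s)^{-1}$ and $\int_1^{t-1}(t-s)^{-3/2}s^{-1}\,ds\lesssim t^{-1}$. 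This is precisely why the paper uses the three-region split $[0,1]\cup[1,t-1]\cup[t-1,t]$: each tool is matched to the regime where it is strong enough.
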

\begin{proof}[Proof of Lemma \ref{lem: miniboot}]
We may only consider $R\geq 1$. Let us fix $R$. We recall $V$ is fast decaying. And we fix $t\geq 1$.

We first note that for any $g$ with
\begin{equation}
\|\nabla g\|_{L_{x}^{2}(B_{R})}\leq C\langle R \rangle^{3/2}B\text{ for some } B>0
\end{equation}
we have, thanks to the fast decay of $V$, that
\begin{equation}
\|Vg\|_{H^{1}}\lesssim CB.	
\end{equation}
We may only consider $t\gg 1$. Indeed, if $t\lesssim 1$, in $[t/2,t]$, we estimate via
\begin{equation}\label{eq: 111}
\bigg\| \int_{t/2}^{t}e^{i(t-s)\Delta}\nabla (V^{2}H(s)f) \bigg\| \lesssim t\sup_{s\in [\frac{t}{2}, t]} \|V^{2}H(s)f\|_{H^{1}_{x}}\;.
\end{equation}
On $s \in [0,t/2]$, since $|t-s|\geq t/2$ and $V^{2}H(s)f$ is uniformly localized in space, we estimate via
\begin{equation}\label{eq: 1111}
\bigg\| \int_{0}^{t/2}e^{i(t-s)\Delta}\nabla (V^{2}H(s)f) \bigg\| \lesssim t \frac{1}{t/2} \|VH(s)f\|_{L_{x}^{2}} \lesssim \|\scal{x} f\|_{L_{x}^{2}}. 
\end{equation}
We now plug in bootstrap assumption \eqref{eq: bamini}, so \eqref{eq: 111} and \eqref{eq: 1111} can be controlled by $t\frac{1}{t}\|\langle x \rangle f\|_{2}$ and the desired estimates follow. (Note that the $R^{3/2}$ is absorbed by fast decay of $V$.)

Now, let us fix $t\gg 1$, we consider three ranges of $s$. When $s\in [t-1,t]$, the estimate is similar to the previous, and we estimate via
\begin{equation}
	\bigg\|\int_{t-1}^{t}e^{i(t-s)\Delta}\nabla (V^{2}H(s)f) ds \bigg\|_{L_{x}^{2}} \lesssim \sup_{s\in [t-1,t]}\|V^{2}H(s)f\|_{H^{1}}\lesssim C\frac{1}{t}\|\langle x \rangle f\|_{2}.
\end{equation}
When $s\in [0,1]$, we simply using $\|\langle x \rangle V^{2}H(s)f\|_{L_{x}^{2}}\lesssim \|f\|_{2}$, and apply \eqref{eq: sls2}. When $s\in [1,t-1]$, we first apply point-wise estimate
\begin{equation}
\|e^{i(t-s)}\nabla (V^{2}H(s))\|_{L_{x}^{\infty}}\lesssim (t-s)^{-3/2}\|\nabla VH(s)\|_{L_{x}^{2}}\lesssim (t-s)^{-3/2}s^{-1}\|\langle x \rangle f\|_{L_{x}^{2}}	
\end{equation}
Now, apply
\begin{equation}
\int_{1}^{t-1}(t-s)^{-3/2}s^{-1} ds \lesssim t^{-1}\;,
\end{equation}
and using the fact locally $L^{\infty}$ embeds $L_{x}^{2}(B_{R})$ with a constant $R^{3/2}$. The desired estimates now follow.
\end{proof}
\subsection{Modified stability}\label{subsec: ms}
The material in this subsection is essentially Lemmas 4.3 and 4.4 and Proposition 4.6 in \cite{fan2020long}, but we have made those results stronger by taking the local smoothing into considerations.\\

As shown\footnote{The setting were slightly different, but the proof works line by line the same if one plugs $\eps_{0}=0$ in Lemma 4.3 or formula (4.19) in \cite{fan2020long}} in Lemma 4.3, \cite{fan2020long}, one can apply Dodson's result, Theorem \ref{thm: dodson} as an black box to establish the desired space time bound for 
\begin{equation}\label{eq: hnls}
\begin{aligned}
iu_{t}+\Delta u=-i\eps^{2}V^{2}u+|u|^{4/3}u,\\
u(0, x)=u_{0}\in L_{x}^{2}.
\end{aligned}
\end{equation}
(Note that all estimates will be independent of $\eps$, as far as $\eps$ is small enough.)

We need a slightly stronger version of Lemma 4.3 in \cite{fan2020long}.

We have
\begin{lemma}\label{lem: hnlsglobal}
Let	$u$ solves \eqref{eq: hnls} with initial data $u_{0}\in L_{x}^{2}$, then $u$ is global forward in time $[0,\infty)$, with estimates
\begin{equation}\label{eq: hnlsg}
\|u\|_{L_{t}^{\alpha}\XXX([0,\infty)\times\mathbb{R}^{3})}\lesssim_{\|u_{0}\|_{L_{x}^{2}}} 1.
\end{equation}
\end{lemma}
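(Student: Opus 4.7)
Since the damped propagator $H$ enjoys the same dispersive, Strichartz, and local smoothing estimates as $e^{it\Delta}$ (Lemmas~\ref{lem: dispersiveh}, \ref{lem: strichartzh}, \ref{lem: ls1}), I treat \eqref{eq: hnls} as a defocusing mass-critical NLS in which $H$ replaces the free Schr\"odinger group. The term $-i\eps^{2} V^{2} u$ is dissipative, so $\|u(t)\|_{L_{x}^{2}}$ is non-increasing in $t$ and all spacetime bounds depend only on $\|u_{0}\|_{L_{x}^{2}}$. The plan is to apply Dodson's Theorem~\ref{thm: dodson} as a black box to the un-damped reference equation \eqref{eq: nls} and deduce the bound for \eqref{eq: hnls} via a stability/perturbation argument, then upgrade from the standard Strichartz norm to the full $\XXX$-norm using the local smoothing estimate for $H$.

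\textbf{Recovering the $L_{t}^{\alpha}L_{x}^{\beta}$ bound.} The weaker statement
\[
\|u\|_{L_{t}^{\alpha}L_{x}^{\beta}([0,\infty)\times\mathbb{R}^{3})}\lesssim_{\|u_{0}\|_{L_{x}^{2}}} 1
\]
is essentially Lemma 4.3 of \cite{fan2020long}. I let $w$ solve \eqref{eq: nls} with the same initial data, partition $[0,\infty)$ into finitely many subintervals on which $\|w\|_{L_{t}^{\alpha}L_{x}^{\beta}}$ is smaller than a fixed threshold (using Theorem~\ref{thm: dodson}), and run a short-time perturbation argument on each piece with $-i\eps^{2} V^{2} u$ playing the role of a small, Schwartz-localized forcing. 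Concatenating across subintervals yields the global-in-time bound, and globality of $u$ itself follows from the subcritical continuation criterion at the Strichartz level.

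\textbf{Upgrading to the $X^{s_{\alpha},\tilde{\beta}'}$ component.} The new ingredient relative to \cite{fan2020long} is the local-smoothing piece in the definition of $\XXX$. I write the Duhamel formula for the damped operator,
\[
u(t) = H(t) u_{0} - i \int_{0}^{t} H(t-s)\big(|u|^{4/3} u\big)(s)\,ds,
\]
and estimate each term in $L_{t}^{\alpha} X^{s_{\alpha},\tilde{\beta}'}$. For the linear piece $H(t)u_{0}$, I interpolate Lemma~\ref{lem: ls1} (which gives $\nabla^{1/2} H(t)u_{0} \in L_{t}^{2} L_{x}^{2}(\langle x \rangle^{-1-\delta}dx)$) against Strichartz (Lemma~\ref{lem: strichartzh}, giving $H(t)u_{0} \in L_{t}^{\alpha}L_{x}^{\beta}$); the identities $s_{\alpha}=1/\alpha$ and $3/\tilde{\beta}=3/\beta+s_{\alpha}$ are arranged exactly so that the interpolant is a local $L_{t}^{\alpha} W_{x}^{s_{\alpha},\tilde{\beta}}$ norm, and the heavy weight $\langle x \rangle^{-100}$ in $X^{s_{\alpha},\tilde{\beta}'}$ then absorbs the loss from $\tilde{\beta}$ down to $\tilde{\beta}'$ via H\"older. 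For the Duhamel integral, I apply the dual local smoothing estimate for $H$ (the analog of \eqref{eq: sls1cor}) together with the already-established bound $\|u\|_{L_{t}^{\alpha}L_{x}^{\beta}} \lesssim 1$ and the pointwise identity $\big|\,|u|^{4/3} u\,\big| = |u|^{7/3}$, placing $|u|^{4/3}u$ in the appropriate dual weighted Strichartz norm.

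\textbf{Main obstacle.} The principal difficulty is the exponent bookkeeping: local smoothing gives an $L_{t}^{2}$ bound with a $1/2$-derivative gain in the unweighted $L_{x}^{2}$, whereas the target Strichartz norm is in $L_{t}^{\alpha}$ with no derivative gain and a different spatial integrability. Arranging the joint $(t,x)$-interpolation so that one lands exactly in $W^{s_{\alpha},\tilde{\beta}'}$ relies essentially on the very strong spatial weight $\langle x \rangle^{-100}$ built into the $X$-norm, which is far stronger than the natural local-smoothing weight $\langle x \rangle^{-(1+\delta)/2}$ and provides ample room to trade integrability exponents. Combined with the perturbation technology from \cite{fan2020long} and the smallness of $\eps$ (so that $-i\eps^{2}V^{2}u$ acts as a controlled perturbation in each short-time Dodson window), this suffices to close the bound.
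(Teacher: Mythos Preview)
Your treatment of the $L_{t}^{\alpha}L_{x}^{\beta}$ part and of the linear piece $H(t)u_{0}$ in the $X^{s_{\alpha},\tilde{\beta}'}$-norm is fine and matches the paper. The gap is in the Duhamel term. The estimate you invoke, the $H$-analog of \eqref{eq: sls1cor}, reads
\[
\Big\|\langle x\rangle^{-\frac{1}{2}-\delta}\,\nabla \int_{0}^{t} H(t-s) F(s)\,ds\Big\|_{L_{t}^{2}L_{x}^{2}}
\lesssim \big\|\langle x\rangle^{\frac{1}{2}+\delta} F\big\|_{L_{t,x}^{2}},
\]
so the source must sit in a \emph{growing}-weight space. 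For $F=|u|^{4/3}u$ there is no spatial localization available, and the bound $\|u\|_{L_{t}^{\alpha}L_{x}^{\beta}}\lesssim 1$ gives you nothing against $\langle x\rangle^{+}$. Interpolating with Strichartz does not help: any interpolant between \eqref{eq: sls1cor} and inhomogeneous Strichartz still carries a positive power of $\langle x\rangle$ on the input side. Nor can you rescue this by a Christ--Kiselev argument directly for $H$, since that would require the unretarded integral $\int_{0}^{\infty}H(t-s)F(s)\,ds$, and $H(t-s)$ with $t<s$ is the backward flow of a dissipative equation.

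The paper's fix is to switch Duhamel formulas at this step: write
\[
u(t)=e^{it\Delta}u_{0}-i\int_{0}^{t} e^{i(t-s)\Delta}\big(|u|^{4/3}u + i\eps^{2}V^{2}u\big)\,ds,
\]
and observe from the energy identity
\[
\int_{0}^{t}\!\!\int \eps^{2}V^{2}|u|^{2}\,dx\,ds=\|u_{0}\|_{L_{x}^{2}}^{2}-\|u(t)\|_{L_{x}^{2}}^{2}
\]
that $\eps^{2}V^{2}u\in L_{t}^{2}L_{x}^{6/5}$. Now both sources lie in dual Strichartz spaces $L_{t}^{a'}L_{x}^{b'}$, and one proves the mixed estimate
\[
\Big\|\int_{0}^{t}e^{i(t-s)\Delta}F(s)\,ds\Big\|_{L_{t}^{\alpha}X^{s_{\alpha},\tilde{\beta}'}}\lesssim \|F\|_{L_{t}^{a'}L_{x}^{b'}}
\]
by Christ--Kiselev: the unretarded version factors as dual Strichartz $\|\int e^{-is\Delta}F\,ds\|_{L_{x}^{2}}\lesssim \|F\|_{L_{t}^{a'}L_{x}^{b'}}$ followed by the linear local-smoothing bound \eqref{eq: numls1}. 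This places the weight on the \emph{output} only, which is exactly what the $\langle x\rangle^{-100}$ in the $X$-norm absorbs. Your scheme put the weight on the wrong side.
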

We recall $(\alpha, \beta)=(4-,3+)$ is admissible. We recall \eqref{eq: xs}, \eqref{eq: xxx} for the notation $\XXX$.
\begin{proof}[Proof of Lemma \ref{lem: hnlsglobal}]
We only need to prove
\begin{equation}
\|u\|_{L_{t}^{\alpha}\XXX([0,\infty)\times\mathbb{R}^{3})}\lesssim_{\|u_{0}\|_{L_{x}^{2}}} 1.
\end{equation}

And the other part of Lemma \ref{lem: hnlsglobal} is already presented in Lemma \cite{fan2020long}.

Indeed, a direct integration by parts, (just computing the time derivative of $\|u(t)\|_{L_{x}^{2}}$), gives
\begin{equation}
\int_{0}^{t}\int \eps^{2}V^{2}|u|^{2}dxds= \|u_{0}\|^{2}_{L_{x}^{2}}-\|u(t)\|^{2}_{L_{x}^{2}}.	
\end{equation}
Thus, $\eps^{2}V^{2}u$ is in the dual space $L_{t}^{2}L_{x}^{6/5}$, and its size is controlled via $\|u_{0}\|_{L_{x}^{2}}$. 

We also know $|u|^{4/3}u \in L_{t}^{a'}L_{x}^{b'}$ for some $(a,b)$ admissible since $u$ is in $L_{t}^{\alpha}L_{x}^{\beta}$, whose size is also controlled by $\|u_{0}\|_{L_{x}^{2}}$ as proved in Lemma 4.3 in \cite{fan2020long}. Via Duhamel formula\footnote{Here, however, we write down the Duhamel formula based on usual Schr\"odinger operator.}, we have
\begin{equation}
u(t)=e^{it\Delta}u_{0}-i\int_{0}^{t}e^{i(t-s)\Delta}(|u|^{4/3}u+i\eps^{2}V^{2}u)ds.	
\end{equation}
Thus, the desired estimates follow from
\begin{equation}\label{eq: mmm}
\|e^{it\Delta}f\|_{L_{t}^{\alpha}X^{s_{\alpha},\tilde{\beta}'}}\lesssim \|f\|_{L_{x}^{2}},
\end{equation}
and 
\begin{equation}\label{eq: nnn}
\bigg\| \int_{0}^{t}e^{i(t-s)\Delta}F(s)ds \bigg\|_{L_{t}^{\alpha}X^{s_{\alpha},\tilde{\beta}}(\mathbb{R}^{+}\times \mathbb{R}^{3})}\lesssim \|F\|_{L_{t}^{a'}L_{x}^{b'}}, 
\end{equation}
where $(a,b)$ is an admissible pair. The former estimate follows from the local smoothing estimate for free Schr\"odinger, interpolated with mass conservation law, see Remark~\ref{rem: num} for more numerics. The latter follows from the following un-retarded estimate via Christ-Kiselev Lemma (\cite{christ2001maximal}):
\begin{equation}
\Big\|\int_{0}^{\infty}e^{i(t-s)\Delta}F(s)ds \Big\|_{L_{t}^{\alpha}X^{s_{\alpha},\tilde{\beta}}(\mathbb{R}^{+}\times \mathbb{R}^{3})}\lesssim \|F\|_{L_{t}^{a'}L_{x}^{b'}}, \text{ where } a,b \text{ admissible}.
\end{equation}
But this is a consequence of  \eqref{eq: mmm} and Strichartz estimate
\begin{equation}
\Big\|\int_{0}^{\infty}e^{-is\Delta} F(s) ds \Big\|_{L_{x}^{2}}\lesssim \|F\|_{L_{t}^{a'}L_{x}^{b'}}.
\end{equation}
This completes the proof. 
\end{proof}

We are ready to present a more refined version of Proposition 4.6 in \cite{fan2020long}. Note that the main point is that one needs to state the stability in the form of Duhamel Formula.
\begin{proposition}\label{prop: lcmdstability}
Let $u$ solve in some interval $[t_{1}, t_{2}]$ the equation
\begin{equation}
u(t)=H(t-t_{1})u(t_{1})-i\int_{t_{1}}^{t} H(t-s) (|u(s)|^{4/3}u(s)) ds + \eta(t),
\end{equation}
with $\|u(t)\|_{L_{t}^{\infty}L_{x}^{2}}\leq M$, and $\eta(a)=0$, then for $(\alpha,\beta)$ admissible and $\frac{7}{3}\leq \alpha\leq \frac{14}{3}$, there exists $e_{M},B_{M}>0$, such that
If $\|\eta\|_{L_{t}^{\alpha}L_{\beta}}\leq e_{M}$, let 
\begin{equation}
v=u-\eta	
\end{equation}
then 
\begin{equation}\label{eq: vvv}
\|v(t)\|_{L_{t}^{2}L_{x}^{6}\cap L_{t}^{\infty}L_{x}^{2}\cap L_{t}^{\alpha}X^{s_{\alpha},\tilde{\beta}'}([t_{1},t_{2}]\times \mathbb{R}^{3})}\leq \frac{B_{M}}{2},
\end{equation}
As a corollary,
\begin{equation}
\|u\|_{L_{t}^{\alpha}L_{x}^{\beta}([t_{1},t_{2}]\times \mathbb{R}^{3})}\leq B_{M}.
\end{equation}
\begin{remark}
The right endpoint $t_{2}$ can be $\infty$.	
\end{remark}
\end{proposition}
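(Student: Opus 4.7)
The plan is to view $v = u - \eta$ as a perturbation of the purely damped nonlinear equation \eqref{eq: hnls} and invoke Lemma~\ref{lem: hnlsglobal} as a black box. Specifically, let $\tilde{v}$ be the solution to \eqref{eq: hnls} on $[t_1,t_2]$ with $\tilde{v}(t_1) = u(t_1)$; Lemma~\ref{lem: hnlsglobal} furnishes a bound
\[
\|\tilde{v}\|_{L_t^2 L_x^6 \,\cap\, L_t^\infty L_x^2 \,\cap\, L_t^\alpha X^{s_\alpha,\tilde{\beta}'}([t_1,t_2]\times\mathbb{R}^3)} \leq C(M),
\]
which will fix $B_M$ up to a constant. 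Setting $w := v - \tilde{v}$ and using $\eta(t_1) = 0$, Duhamel yields
\[
w(t) = -i\int_{t_1}^t H(t-s)\bigl(|u|^{4/3} u - |\tilde{v}|^{4/3}\tilde{v}\bigr)\, ds,
\]
with $u - \tilde{v} = w + \eta$. The goal is then to show that $w$ is small in $L_t^2 L_x^6 \cap L_t^\infty L_x^2 \cap L_t^\alpha X^{s_\alpha,\tilde{\beta}'}$ once $\|\eta\|_{L_t^\alpha L_x^\beta} \leq e_M$ for a suitably small $e_M = e_M(M)$.

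The execution is a standard subdivision argument. I would partition $[t_1,t_2]$ into finitely many subintervals $I_1,\dots,I_K$, with $K=K(M)$, on each of which $\|\tilde{v}\|_{L_t^\alpha L_x^\beta(I_k)}$ is smaller than a universal threshold $\eta_0$ dictated by the nonlinear estimate. On each $I_k$ inductively, one employs the pointwise bound
\[
\bigl||u|^{4/3}u - |\tilde{v}|^{4/3}\tilde{v}\bigr| \lesssim \bigl(|\tilde{v}|^{4/3} + |w|^{4/3} + |\eta|^{4/3}\bigr)\bigl(|w| + |\eta|\bigr),
\]
together with H\"older in space-time and the Strichartz estimate \eqref{eq: strichartzh2} for $H$, to close a continuity argument for $w$ in $L_t^2 L_x^6 \cap L_t^\infty L_x^2$. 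To control the local-smoothing component $L_t^\alpha X^{s_\alpha,\tilde{\beta}'}$ on the same $I_k$, one applies the non-retarded inequality \eqref{eq: nnn} together with the Christ--Kiselev lemma, exactly as in the proof of Lemma~\ref{lem: hnlsglobal}; this bounds the Duhamel integral in the local-smoothing norm by the same dual Strichartz quantity already controlled in the previous step. The multiplicative degradation of constants from one $I_k$ to the next is absorbed into $B_M$ after $K(M)$ iterations, and the corollary $\|u\|_{L_t^\alpha L_x^\beta} \leq B_M$ then follows from $u = v + \eta$ and $e_M \ll B_M$.

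The main technical obstacle is the inclusion of the $X^{s_\alpha,\tilde{\beta}'}$ piece, which is new compared with Proposition~4.6 of \cite{fan2020long}. The difference of nonlinearities lives naturally in a dual Strichartz space but not in the dual of a local-smoothing norm, so one cannot directly iterate estimates dual to Lemma~\ref{lem: ls1}. The key input making the argument close is \eqref{eq: nnn}, which controls the Duhamel integral in the local-smoothing norm by a dual Strichartz norm of the source with no derivative loss. That estimate is itself obtained by composing the free-Schr\"odinger local smoothing bound \eqref{eq: mmm} with Strichartz for the initial value problem, as already carried out in the proof of Lemma~\ref{lem: hnlsglobal}. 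The hypothesis $7/3 \leq \alpha \leq 14/3$ is precisely the window in which both the H\"older/nonlinear step and \eqref{eq: nnn} live on compatible admissible exponents, so that everything can be estimated simultaneously in one bootstrap.
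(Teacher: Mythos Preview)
Your approach is correct and the key mechanism---controlling the local-smoothing norm of a Duhamel integral by a dual Strichartz norm of the source via \eqref{eq: nnn}---is the same one the paper uses. The execution differs, however. The paper does not introduce the auxiliary profile $\tilde v$ nor run a subdivision argument afresh: it simply invokes Proposition~4.6 of \cite{fan2020long} as a black box for the $L_t^2 L_x^6 \cap L_t^\infty L_x^2$ bound on $v$, and then, to obtain the additional $L_t^\alpha X^{s_\alpha,\tilde\beta'}$ control, it rewrites the Duhamel formula for $v$ \emph{based on the free propagator} $e^{it\Delta}$ (absorbing the damping $-i\eps^2 V^2 v$ as a source term $F_1 \in L_t^2 L_x^{6/5}$ and the nonlinearity as $F_2 \in L_t^{a'}L_x^{b'}$), after which a single application of \eqref{eq: mmm} and \eqref{eq: nnn} finishes the proof. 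Your route---perturbing off the full nonlinear damped solution $\tilde v$ and subdividing---is the standard stability machinery that underlies Proposition~4.6 of \cite{fan2020long} in the first place, so you are in effect re-proving that result rather than quoting it. This buys you a self-contained argument at the cost of length; the paper's shortcut buys brevity but leans on the prior reference. One small point to make explicit in your write-up: since \eqref{eq: nnn} is stated for $e^{i(t-s)\Delta}$ rather than $H(t-s)$, you must switch the Duhamel representation of $w$ to the free propagator (incurring the extra $\eps^2 V^2 w$ source) before invoking it, exactly as the paper does in the proof of Lemma~\ref{lem: hnlsglobal}; you gesture at this by citing that proof, but it is worth saying outright.
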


\begin{proof}
The key to the proof, as in the proof of Prop 4.6, \cite{fan2020long}, is to study the equation of $v$ rather than $u$. And the $L_{t}^{2}L_{x}^{6}\cap L_{t}^{\infty}L_{x}^{2}$ bound was already proven there\footnote{Again, one may take the $\eps_{0}=0$ in (4.38), \cite{fan2020long}, and the proof will go through}.

We only need to do the $L_{t}^{\alpha}X^{s_{\alpha},\tilde{\beta}}$.

Note that $v$ solves
\begin{equation}
\begin{cases}
iv_{t}-\Delta v+i\eps^{2}V^{2}v=(|v+\eta|^{4/3})(v+\eta),\\
v(t_{1})=u(t_{1})
\end{cases}
\end{equation}
Let $F_{1}=-i\eps^{2}V^{2}v$, then $F_{1}\in L_{t}^{2}L_{x}^{6/5}$.

Let $F_{2}=(|v+\eta|^{4/3})(v+\eta)$, then $F_{2}$ for some $(a,b)\in L_{t}^{a'}L_{x}^{b'}$ admissible, (since $v$, $\eta$ both in $L_{t}^{\alpha}L_{x}^{\beta}$)
, And by Duhamel Formula, one has 
\begin{equation}
v(t)=e^{i(t-t_{1})\Delta}v(t_{1})-i\int_{t_1}^{t}e^{i(t-s)\Delta}(F_{1}+F_{2})ds.
\end{equation}
The rest part of the proof are same as the proof of \eqref{eq: mmm} and \eqref{eq: nnn}.
\end{proof}

\section{Main estimates for the linear model: proof of Lemmas~\ref{lem: linearesmin} and~\ref{lem: linearesmain}} \label{sec: linearmain}

\subsection{Proof of Lemma \ref{lem: linearesmin}}
We fix $(\alpha,\beta)=(4-,3+)$ an admissible pair. We will split the (stochastic) integral $\int_{0}^{t}$ into $\int_{0}^{t-1\vee 0}$ and $\int_{t-1\vee 0}^{t}$. We record two simple but crucial numeric facts in the following remark.
\begin{remark}\label{rem: num}
\begin{itemize}
\item For the dispersive estimate $\|H(s)f\|_{L_{x}^{\beta}}\lesssim t^{-3(\frac{1}{2}-\frac{1}{\beta})}$, $\beta>3$ implies $-3(\frac{1}{2}-\frac{1}{\beta})<-1/2$.
\item For $f\in L_{x}^{2}$, and $V$ smooth and fast decaying, we know via Lemma \ref{lem: ls1} (and $\|H(t)f\|_{L_{x}^{2}}\leq \|f\|_{L_{x}^{2}}$), that $\|VH(s)f\|_{L_{t}^{2}W_{x}^{\frac{1}{2},p}([0,\infty)\times \mathbb{R}^{3})\cap L_{t}^{\infty}L_{x}^{p}([0,\infty)\times \mathbb{R}^{3})} <\infty$ for all $1<p\leq 2$. Thus, we know it is in $L_{t}^{\alpha}W^{s_{\alpha},q'}$ for all $q\geq 2$, where $s_{\alpha}=\frac{\theta}{2}+0\times{(1-\theta)}$ and $\frac{1}{\alpha}=\frac{\theta}{2}+\frac{1-\theta}{\infty}$, i.e. $s_{\alpha}=\frac{1}{\alpha}$. One may compute the $\tilde{\beta}$ such that $W^{s_{\alpha},\tilde{\beta}}$ (just) embeds $L^{\beta}$, one has $-s_{\alpha}+\frac{3}{\tilde{\beta}}=\frac{3}{\beta}=\frac{3}{2}-\frac{2}{\alpha}$. This give $\frac{3}{\tilde{\beta}}=\frac{3}{2}-\frac{1}{\alpha}$, note that $\alpha>2$ implies $\tilde{\beta}<3$, and note that ${-3(\frac{1}{2}-\frac{1}{\tilde{\beta}})}>-1/2$. We also note $\tilde{\beta}>2$ always hold, thus $VH(s)f$ is in $L_{t}^{\alpha}W^{s_{\alpha},\tilde{\beta}'}$.
\item We summarize the above discussion into estimates
\begin{equation}\label{eq: numls1}
\|e^{i t \Delta}f\|_{L_{t}^{\alpha}X^{s_{\alpha},\tilde{\beta}'}(\mathbb{R}\times \mathbb{R}^{3})}\lesssim \|f\|_{L_{x}^{2}}
\end{equation}
and 
\begin{equation}\label{eq: numls2}
\|e^{it\Delta}f\|_{L_{t}^{\alpha}X^{s_{\alpha},\tilde{\beta}'}([0,\infty)\times \mathbb{R}^{3})}\lesssim \|f\|_{L_{x}^{2}}.
\end{equation}
And \eqref{eq: numls1}, \eqref{eq: numls2} also hold if one replaces $e^{it\Delta}$ by $H(t)$.

\end{itemize}
\end{remark}
We carry on the proof.  We will fix admissible pair $(\alpha,\beta)=(4-,3+)$ but $(2+,6-)$.

We first prove following two estimates regarding linear solutions.
\begin{equation}\label{eq: estlinearsmoothing}
	\|VH(t)u_{0}\|_{L_{t}^{\alpha}W^{s_{\alpha}\tilde{\beta}'}([0,\infty)\times \mathbb{R}^{3})}\lesssim \|u_{0}\|_{L_{x}^{2}}
\end{equation}
and
\begin{equation}\label{eq: estlinearsti}
\|VH(t)u_{0}\|_{L_{t}^{\alpha}L_{x}^{\beta'}([0,\infty)\times \mathbb{R}^{3})}\lesssim \|u_{0}\|_{L_{x}^{2}}.
\end{equation}

We will let 
\begin{equation}
F(t):=H(t)u_{0},	
\end{equation}
in the following and we only need to estimate 
$\int_{t-1\vee 0}^{t}H(t-s)VF(s)dB_{s}$ and $\int_{0}^{t-1\vee 0}F(s)dB_{s}$.

\begin{proof}[Proof of estimates \eqref{eq: estlinearsmoothing} and \eqref{eq: estlinearsti}]

First observe \eqref{eq: estlinearsti} is a direct consequence of Strichartz estimate, Lemma \ref{lem: strichartzh} and the fact that $V$ is well localized.

We now turn to the proof of \eqref{eq: estlinearsmoothing}. But it is a direct consequence of \eqref{eq: ls1} and our choice of $s_{\alpha}$, and the fact that $V$ is well localized and smoothing, as explained in Remark \ref{rem: num}.
\end{proof}

We now turn to the estimate for $\int_{t-1\vee 0}^{t}H(t-s)F(s)dB_{s}$. We apply dispersive estimate and Sobolev embedding to obtain
\begin{equation}\label{eq: disso}
\begin{aligned}
\|H(t-s)VF(s)\|_{L_{x}^{\beta}} &\lesssim \|H(t-s)F(s)\|_{W_{x}^{s,\tilde{\beta}}}\\
&\lesssim (t-s)^{-3(\frac{1}{2}-\frac{1}{\tilde{\beta}})}\|F(s)\|_{W_{x}^{s,\tilde{\beta}'}}.
\end{aligned}
\end{equation}
Next, for all $t$ fixed, by Burkholder inequality, we have
\begin{equation}\label{eq: minib1}
\bigg\| \int_{t-1\vee 0}^{t}H(t-s)F(s)dB_{s} \bigg\|_{L_{\omega}^{\alpha}W_{x}^{s_{\alpha},\tilde{\beta}}} \lesssim \bigg\|\int_{t-1\vee 0}^{t}\|H(t-s)F(s)\|^{2}_{W_{x}^{s_{\alpha},\tilde{\beta}}} ds \bigg\|_{L_{\omega}^{\alpha/2}}^{1/2}.
\end{equation}
Thus, summarizing \eqref{eq: disso}, \eqref{eq: minib1}, we obtain\footnote{We choose space of $L_{\omega,t}^{\alpha} $ type so that we can switch the order of integration.}
\begin{equation}\label{eq: minifinal1}
\begin{aligned}
&\phantom{111}\bigg\|\int_{t-1\vee 0}^{t}H(t-s) (V H(s)u_{0}) ds \bigg\|_{L_{\omega}^{\alpha}L_{t}^{\alpha}L_{x}^{\beta}(\domain)}^{\alpha}\\
&\lesssim \int_{\R^{+}} \left(\int_{t-1\vee 0}^{t}\|H(t-s) (VH(s)u_{0}) \|^{2}_{W_{x}^{s_{\alpha},\tilde{\beta}}} ds\right)^{\alpha/2} dt\\
&\lesssim \bigg\|\int_{t-1\vee 0}^{t}(t-s)^{-6(\frac{1}{2}-\frac{1}{\tilde{\beta}})}\|VH(s)u_{0}\|_{W_{x}^{s,\tilde{\beta}'}}^{2} ds \bigg\|_{L_{t}^{\alpha/2}}.
\end{aligned}
\end{equation}
An application of Young inequality closes the estimate via \eqref{eq: estlinearsmoothing} since 
$(t-s)^{-6(\frac{1}{2}-\frac{1}{\tilde{\beta}})}$ is locally integrable (as a function of $s$, localized around $0\leq t-s\leq 1$). We refer to Remark~\ref{rem: num} for more on the numerics of the exponents.

Now, we estimate the part $\int_{0}^{t-1\vee 0}H(t-s)F(s)dB_{s}$. We may only consider the estimate for the part $t\gg 1$, otherwise we just repeat the previous argument for the first part. Recall $\beta>3$, so that $3(\frac{1}{2}-\frac{1}{\beta})>1/2$. With Burkholder inequality (recall we may only work on $t\gg 1$ and for example $t\geq 2$), we have
\begin{equation}
\begin{aligned}
&\phantom{111} \bigg\|\int_{0}^{t-1}H(t-s)F(s)dB_{s} \bigg\|_{L_{\omega}^{\alpha}L_{t}^{\alpha}L_{x}^{\beta}(\Omega\times [2,\infty)\times \mathbb{R}^{3})}^{\alpha}\\
&\lesssim \bigg\|\int_{0}^{t-1}(t-s)^{6(\frac{1}{2}-\frac{1}{{\beta}})}\|F(s)\|_{L_{t}^{\alpha}L_{x}^{\beta'}}^{2} \bigg\|_{L_{\omega}^{\alpha/2}}.
\end{aligned}
\end{equation}
Using \eqref{eq: estlinearsti} and the fact $(t-s)^{6(\frac{1}{2}-\frac{1}{{\beta}})}$ is integrable at $|t-s|\geq 1$ and the desired estimate follows by Young inequality.

\subsection{Proof of Lemma \ref{lem: linearesmain}}
Now we turn to the proof Lemma~\ref{lem: linearesmain}. We first observe that \eqref{eq: linearlocalpart} and \eqref{eq: linearglobalpart} follow from \eqref{eq: lineara1} and \eqref{eq: lineara2} via arguments similar to the previous section. Indeed, now we are estimating $\int H(t-s)A(s)dB_{s}$ while in the previous section we are estimating $\int H(t-s)F(s) ds$, and we just need to replace \eqref{eq: estlinearsmoothing} and \eqref{eq: estlinearsti} by \eqref{eq: lineara1} and \eqref{eq: lineara2}. Since the arguments are almost line by line same in other parts, we omit the details here. 

We now turn to the proof of \eqref{eq: lineara1} and \eqref{eq: lineara2}. We remark we choose the space of $L_{\omega,t}^{\alpha}$ type so we can switch the order of integral in $t$ and $\omega$ freely. 

\begin{proof}[Proof of \eqref{eq: lineara1}]
By Burkholder inequality, for $t$ fixed, one has 
\begin{equation}
\|A(t)\|_{L_{\omega}^{\alpha}W_{x}^{s_{\alpha},\tilde{\beta}'}} \lesssim  \bigg\| \int_{0}^{t}\|V(x)H(t-s)V(x)u\|_{W_{x}^{s_{\alpha},\tilde{\beta}'}}^{2} ds \bigg\|^{1/2}_{L_{\omega}^{\alpha/2}}.		
\end{equation}
We are left with the proof of
\begin{equation}\label{eq: lsmains}
\bigg\|\int_{0}^{t}\|V(x)H(t-s)V(x)u(s)\|_{W_{x}^{s_{\alpha},\tilde{\beta}'}}^{2}ds \bigg\|_{{L_{\omega}^{\alpha/2}L_{t}^{\alpha/2}}} \lesssim \|u\|_{L_{\omega}^{\alpha}L_{t}^{\alpha}L_{x}^{\beta}}^{2}.	
\end{equation}
We note that one should not worry too much about the integrability in $x$ but focus rather on regularity in $x$, since $V$ is Schwarz.  And one can also process in a deterministic way, estimate via fixing $\omega$ and take $L_{\omega}^{\alpha/2}$ at the end.

One may split the integral in \eqref{eq: lsmains} into $|t-s|\leq 1$ and $|t-s|\geq 1$. For $|t-s|\leq 1$, we first apply H\"older inequality, relying\footnote{Note that $\alpha$ is always no smaller than $2$, as far as $(\alpha,\beta)$ is admissible.} on $\alpha\geq 2$ that
\begin{equation}
\begin{aligned}
&\phantom{111}\bigg\| \int_{0}^{t}\chi_{|t-s|\leq 1}\|VH(t-s)Vu(s)\|_{W_{x}^{s_{\alpha},\beta'}}^{2} \bigg\|_{L_{t}^{\alpha/2}}^{\alpha/2}\\
&= \bigg\|\left(\int_{0}^{t}\chi_{|t-s|\leq 1}\|VH(t-s)Vu(s)\|_{W_{x}^{s_{\alpha},\beta'}}^{2} ds \right)^{1/2}  \bigg\|_{L_{t}^{\alpha}}^{\alpha}\\
&\lesssim \int_{0}^{\infty}\int_{0}^{t}\|VH(t-s)Vu(s)\|_{W_{x}^{s_{\alpha},\beta'}}^{\alpha} ds dt.
\end{aligned}	
\end{equation}
Switching the order of integral, applying $\|VH(t)f\|_{L_{t}^{\alpha}W^{s_{\alpha},p}}\lesssim \|f\|_{L_{x}^{2}}$, $p\leq 2$, as explained in Remark~\ref{rem: num}, one obtains
\begin{equation}
\int_{0}^{\infty}\int_{0}^{t}\|VH(t-s)Vu(s)\|_{W_{x}^{s_{\alpha},\beta'}}^{\alpha} ds dt \lesssim \int_{\R^+} \|Vu(s)\|_{L_{x}^{2}}^{\alpha} ds \lesssim \|u\|_{L_{t}^{\alpha}L_{x}^{\beta}}^{\alpha}.
\end{equation}
Integrating in $\omega$ variable and the desired estimate will follow.

For $|t-s|\geq 1$, applying the local smoothing estimate in Lemma~\ref{lem: ls2} and using localization of $V$, we get
\begin{equation}
\|\langle x \rangle^{-10}H(t-s)Vu(s)\|_{\dot{H}^{1}}\lesssim 	\|u(s)\|_{L_{x}^{\beta}}(t-s)^{-2}.
\end{equation}
Thus, we derive
\begin{equation}
\|V(x)H(t-s)Vu(s)\|_{W^{1, \tilde{\beta}'}}\lesssim \|u(s)\|_{L_{x}^{\beta}}(t-s)^{-1}
\end{equation}
By Young's inequality, we obtain
\begin{equation}
\bigg \|\int_{0}^{t}\chi_{|t-s|\geq 1}	 \|u(s)\|^{2}_{L_{x}^{\beta}}(t-s)^{-2} \bigg\|_{L_{t}^{\alpha/2}}^{1/2}\lesssim \|u\|_{L_{t}^{\alpha}L_{x}^{\beta}}
\end{equation}
Taking $L_{\omega}^{\alpha}$ on both sides and desired estimate will follow.
\end{proof}

\begin{proof}[Proof of \eqref{eq: lineara2}]
By Burkholder inequality, for $t$ fixed, one has 
\begin{equation}
\|A(t)\|_{L_{\omega}^{\alpha}L_{x}^{\beta'}} \lesssim \bigg\|\int_{0}^{t}V(x)H(t-s)V(x)u\|_{L_{x}^{\beta'}}^{2} ds \bigg\|^{1/2}_{L_{\omega}^{\alpha/2}}\;,
\end{equation}
and we only need to estimate  $\|\int_{0}^{t}V(x)H(t-s)V(x)u\|_{L_{x}^{\beta}}^{2} ds\|^{1/2}_{L_{\omega,t}^{\alpha/2}}$.

Using localization of $V$, we have
\begin{equation}
\|VH(t-s)V(x)u(s)\|_{L_{x}^{\beta'}}\lesssim \|Vu\|_{L_{x}^{2}}\lesssim \|u(s)\|_{L_{x}^{\beta}}, \text{ for } t-s\leq 1	
\end{equation}
and (by dispersive estimate)
\begin{equation}
\|VH(t-s)V(x)u_{s}\|_{L_{x}^{\beta'}}\lesssim \|H(t-s)V(x)u(s)\|_{L_{x}^{\infty}}\lesssim (t-s)^{-3/2}\|u(s)\|_{L_{x}^{\beta}}.	
\end{equation}
To summarize, we have
\begin{equation}
\int_{0}^{t}\|VH(t-s)Vu(s)ds\|_{L_{x}^{\beta'}}^{2}\lesssim \int_{0}^{t}(1+|t-s|)^{-3}\|u(s)\|_{L_{x}^{\beta}}^{2}ds
\end{equation}
Applying Young's inequality in $t$ and taking the $L_{\omega}^{\alpha}$ norm, the desired estimates will follow.
\end{proof}

\section{Main estimates for the nonlinear model} \label{sec: mm}

We first present the proof of Lemma \ref{lem: pdfa}. We recall \eqref{eq: xs}, \eqref{eq: xxx} for notations.
\subsection{Proof of Lemma \ref{lem: pdfa}}
\begin{proof}
In view of splitting the time interval, it suffices to consider two cases:
\begin{equation}
 \bigg\| (\int_0^{t-1} \|H(t-s)VG(s)\|_{\mathbb{X}}^2 ds)^{\frac{1}{2}} \bigg\|_{L_t^{\alpha}[0,\infty)}
\end{equation}
and 
\begin{equation}
\bigg\|(\int_{t-1}^{t} \|H(t-s) VG(s) \|_{\mathbb{X}}^2 ds)^{\frac{1}{2}} \bigg\|_{L_t^{\alpha}[0,\infty)}.
\end{equation}
Also, we note that estimate \eqref{eq: pdfa} is indeed two estimates noticing the definition of the $\mathbb{X}$-norm. We consider the $X^{s_{\alpha},\tilde{\beta}'}$-norm case and the $L_x^{\beta}$-norm case respectively. Thus totally there are four sub-cases.

We consider the left hand side of \eqref{eq: pdfa} takes $X^{s_{\alpha},\tilde{\beta}'}$-norm. For the first case, we use the first local smoothing estimate Lemma \ref{lem: ls1} together with the observation that on finite interval $L^2$-norm can be improved to $L^{\alpha}$-norm ($\alpha>2$)
\begin{align*}
\bigg\|(\int_{t-1}^{t} \|H(t-s)VG(s)\|_{X^{s_{\alpha},\tilde{\beta}'}}^2 ds)^{\frac{1}{2}} \bigg\|_{L_t^{\alpha}[0,\infty)} &\lesssim  \bigg\| \Big( \int_{t-1}^{t} \|H(t-s)VG(s)\|_{X^{s_{\alpha},\tilde{\beta}'}}^\alpha ds \Big)^{\frac{1}{\alpha}} \bigg\|_{L_t^{\alpha}[0,\infty)}\\
&\lesssim   \big\| \|H(t-s)VG(s)\|_{X^{s_{\alpha},\tilde{\beta}'}} \big\|_{L_t^{\alpha}L_s^{\alpha}} \\
&\lesssim   \big\| \|H(t-s)VG(s)\|_{X^{s_{\alpha},\tilde{\beta}'}} \big\|_{L_s^{\alpha}L_t^{\alpha}} \\
&\lesssim  \|G\|_{L_t^{\alpha}\mathbb{X}}. 
\end{align*}
For the second case, we use the second local smoothing estimate \eqref{lem: ls2} together with the Young's inequality,
\begin{align*}
\bigg\| \Big( \int_{0}^{t-1} \|H(t-s)VG(s)\|_{X^{s_{\alpha},\tilde{\beta}'}}^2 ds \Big)^{\frac{1}{2}} \bigg\|_{L_t^{\alpha}[0,\infty)} &\lesssim  \bigg\| \Big(\int_{0}^{t-1} (t-s)^{-2}\|VG(s)\|_{L^{2}_x} ds \Big)^{\frac{1}{2}} \bigg\|_{L_t^{\alpha}[0,\infty)}\\
&\lesssim  \bigg\| \Big(\int_{0}^{t-1} (t-s)^{-2}\|G(s)\|_{L^{\beta}_x}^2 ds \Big)^{\frac{1}{2}} \bigg\|_{L_t^{\alpha}[0,\infty)}\\
&\lesssim  \|G\|_{L_t^{\alpha}\mathbb{X}}. 
\end{align*}
We now consider the left hand side of \eqref{eq: pdfa} takes $L_x^{\beta}$-norm. For the first case, we use the dispersive estimate \eqref{eq: dispersiveh}, Sobolev embedding and the Young's inequality,
\begin{align*}
\bigg\| \Big(\int_{t-1}^{t} \|H(t-s)VG(s)\|_{L_x^{\beta}}^2 ds \Big)^{\frac{1}{2}} \bigg\|_{L_t^{\alpha}[0,\infty)}
&\lesssim  \bigg\| \Big( \int_{t-1}^{t} \|H(t-s)VG(s)\|_{W^{s_{\alpha},\tilde{\beta}}}^2 ds \Big)^{\frac{1}{2}} \bigg\|_{L_t^{\alpha}[0,\infty)}\\
&\lesssim  \bigg\| \Big(\int_{t-1}^{t} (t-s)^{-2(\frac{3}{2}-\frac{3}{\tilde{\beta}})}\|VG(s)\|^{2}_{W^{s_{\alpha},\tilde{\beta}'}} ds \Big)^{\frac{1}{2}} \bigg\|_{L_t^{\alpha}[0,\infty)} \\
&\lesssim  \bigg\| \Big( \int_{t-1}^{t} (t-s)^{-2(\frac{3}{2}-\frac{3}{\beta})} \|G(s)\|_{\mathbb{X}}^2 ds \Big)^{\frac{1}{2}} \bigg\|_{L_t^{\alpha}[0,\infty)} \\
&\lesssim \|G\|_{L_t^{\alpha}\mathbb{X}}. 
\end{align*}
For the second case, we use the dispersive estimate \eqref{eq: dispersiveh} together with the Young's inequality,
\begin{align*}
\bigg\| \Big( \int_{0}^{t-1} \|H(t-s)VG(s)\|_{L_x^{\beta}}^2 ds \Big)^{\frac{1}{2}} \bigg\|_{L_t^{\alpha}[0,\infty)} &\lesssim  \bigg\| \Big( \int_{0}^{t-1} (t-s)^{-2(\frac{3}{2}-\frac{3}{\beta})}\|VG(s)\|_{L^{\beta^{'}}_x} ds \Big)^{\frac{1}{2}} \bigg\|_{L_t^{\alpha}[0,\infty)}\\
&\lesssim \bigg\| \Big( \int_{0}^{t-1} (t-s)^{-2(\frac{3}{2}-\frac{3}{\beta})}\|G(s)\|_{L^{\beta}_x}^2 ds \Big)^{\frac{1}{2}} \bigg\|_{L_t^{\alpha}[0,\infty)}\\
&\lesssim \|G\|_{L_t^{\alpha}\mathbb{X}}. 
\end{align*}
\end{proof}

\subsection{Proof of Proposition \ref{prop: nlmain2}}

The proof of Proposition \ref{prop: nlmain2} is schematically similar to the proofs of Lemmas 3.6 andd 4.10 in \cite{fan2020long}, with extra focus on the local smoothing effect, and in particular relies on the Proposition~\ref{prop: lcmdstability} in the current article, which enhances Proposition 4.6 in \cite{fan2020long}. 

\subsubsection{Step: 1} We fix $(\alpha,\beta)=(4-,3+)$, $M=\|u\|_{L_{\omega}^{\infty}L_{t}^{\infty}L_{x}^{2}}$. Fix $e_{m}$ as in Prop \ref{prop: lcmdstability}. Note that
\begin{equation}\label{eq: as}
	\sum_{A}A^{\alpha}\mathbb{P}(\omega: \|M^{*}\|_{L_{t}^{\alpha}}\sim A)\lesssim \|M^{*}\|_{L_{\omega}^{\alpha}L_{t}^{\alpha}}^{\alpha}+1
\end{equation}
where $A$ ranges over all dyadic integers.

Now we fix an $\omega$ such that $\|M^{*}\|_{L_{t}^{\alpha}([0,\infty))}\sim A$.

\subsubsection{Step: 2} We proceed in a deterministic way. We may divide $[0,\infty)$ into $J$ disjoint interval $\cup_{j}[t_{j},t_{j+1}]$, $(t_{J+1}=\infty)$, so that 
\begin{equation}\label{eq: zzz}
J\lesssim \frac{A^{\alpha}}{\eps_{M}^{\alpha}}+1, \text{ and } \|M^{*}(t)\|_{L_{t}^{\alpha}([t_{j}, t_{j+1}])}\leq e_{M}.	
\end{equation}
Write down the Duhamel formula for $u$ in each $[t_{j},t_{j+1}]$, 
\begin{equation}\label{eq: ggg}
\begin{aligned}
u(t)=H(t-t_{j})u(t_{j})+i\int_{t_{j}}^{t}H(t-s) (|u(s)|^{4/3}u(s)) ds+\eta(t),\\
\eta(t):=\eta(t_{j},t)=i\int_{t_{j}}^{t}H(t-s) (\eps V u(s)) dB_{s}.
\end{aligned}
\end{equation}
Note that $\|\eta(t)\|_{\XXX}\leq M^*(t)$ pointwise in $t$. Now we define, on each $[t_{j},t_{j+1}]$
\begin{equation}
u_{1}=u-\eta, 
u_{2}=\eta	.
\end{equation}
Note that $u_{2}$ already satisfy the property required in Prop \ref{prop: nlmain2}.

\subsubsection{Step 3:} 

We now check the property of $u_{1}$. Applying Proposition~\ref{prop: lcmdstability} on each $[t_{j}, t_{j+1}]$, we have
\begin{equation}
\|u_{1}\|_{L_{t}^{\alpha}\XXX}\leq B_{M}.	
\end{equation}
Thus, by integration in $L_{t}^{\alpha}$ on all those $J$ intervals, we have
\begin{equation}
\|u_{1}\|_{L_{t}^{\alpha}\XXX([0,\infty)\times \mathbb{R}^{3})}\lesssim J	.
\end{equation}
The desired estimates for $u_{1}$ now follows from \eqref{eq: zzz} and \eqref{eq: as}.
\section{On scattering behavior}\label{sec: scattering}
We note that the material in this section can be understood as a refinement of the associated part in \cite{fan2020long}, using the idea in the proof of our main estimate \eqref{eq: estimatemain_nonlinear}.

Recall Duhamel formula
\begin{equation}\label{eq: finalduhamel}
u(t)=H(t)u_{0}-i\int_{0}^{t}H(t-s)Vu(s)dB_{s}-i\int_{0}^{t}H(t-s)|u|^{4/3}uds.	
\end{equation}

and we have already established \eqref{eq: estimatemain_nonlinear}.

Following the arguments in Section 6 of \cite{fan2020long}, the desired scattering behavior will follow if we can prove
\begin{equation}\label{eq: basescattering}
\|u\|_{L_{\omega}^{2}L_{t}^{2}W(\Omega\times [0,\infty)\times \mathbb{R}^{3})}\lesssim 1
\end{equation}
Where
\begin{equation}
\|f\|_{W}:=\|<x>^{-100}f\|_{L_{x}^{2}}.	
\end{equation}
Note that $\|f\|_{W}\lesssim \|f\|_{L_{x}^{6}}.$

Let $\mathbb{Z}$ be defined as 
\begin{equation}
\|f\|_{\mathbb{Z}}:=\|\langle x \rangle^{-10}f\|_{H^{1/2}}
\end{equation}
And we assume $M:=\|u_{0}\|_{L_{\omega}^{\infty}L_{x}^{2}}$ is fixed and $\epsilon$ has been chosen small enough (and fixed).

Via \eqref{eq: finalduhamel}, crucial space time bound \eqref{eq: estimatemain_nonlinear},  we can obtain \eqref{eq: basescattering} if we can prove
\begin{equation}\label{eq: firscattering}
\|i\int_{0}^{t}H(t-s)\epsilon VudB_{s}\|_{L_{\omega}^{2}L_{t}^{2}\mathbb{Z}}\lesssim 1
\end{equation}

Before we carry on the proof, we point out that we will need a stronger bound than \eqref{eq: estimatemain_nonlinear}, i.e. \eqref{eq: keysum} regarding $M^{*}(t)$, which is also established in the previous sections.

We introduce, 
\begin{equation}
M^{*}_{1}(t):=\sup_{a \leq b\leq t}\|\int_{a}^{b}H(t-s)\epsilon Vu dB_{s}\|_{\mathbb{Z}}	
\end{equation}

And clearly \eqref{eq: firscattering} follows from
\begin{equation}\label{eq: secondscattering}
\|M^{*}_{1}(t)\|_{L_{\omega}^{2}L_{t}^{2}(\Omega\times [0,\infty))}\lesssim 1.
\end{equation}

Standard continuity argument or bootstrap technique will yield 
\eqref{eq: secondscattering} once we have \textbf{a priori} estimate, (and $\epsilon$ is small enough),
\begin{equation}\label{eq: m1apriori}
\|M_{1}^{*}(t)\|_{L_{\omega}^{2}L_{t}^{2}(\Omega\times [0,\infty))}\lesssim 1+\textcolor{red}{\epsilon}\|M_{1}^{*}(t)\|_{L_{\omega}^{2}L_{t}^{2}(\Omega\times [0,\infty))}.
\end{equation}

Arguing similarly as the proof of Lemma 3.6 in \cite{fan2020long}, we have the its analogous version\footnote{Basically, $u_{1}$ satisfy all the good properties of linear free Schr"odinger solution, and recall $\|e^{it\Delta}f\|_{L_{t}^{2}\mathbb{Z}}\lesssim \|f\|_{L_{x}^{2}}$} as follows, (with our Prop \ref{prop: lcmdstability}),
\begin{lemma}\label{lem: secondsplit}
Let $u$ be the solution to \eqref{eq: snls} with \eqref{eq: keysum} and \eqref{eq: secondscattering}, then we can split $u=u_{1}+u_{2}$, with
\begin{equation}\label{eq: f1}
\|u_{1}\|_{L_{\omega}^{2}L_{t}^{2}\mathbb{Z}(\Omega\times [0,\infty)\times \mathbb{R}^{3})}\lesssim 1+\|M^{*}\|_{L_{\omega}^{\alpha}L_{t}^{\alpha}}^{\frac{\alpha}{2}}	
\end{equation}
and 
\begin{equation}\label{eq: f2}
\|u_{2}\|_{L_{\omega}^{2}L_{t}^{2}\mathbb{Z}(\Omega\times \times [0,\infty)\times\mathbb{R}^{3})}\lesssim \|M_{1}^{*}(t)\|_{L_{\omega}^{2}L_{t}^{2}(\Omega\times [0,\infty))}.
\end{equation}
\end{lemma}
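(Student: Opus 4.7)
The proof follows the template of Proposition~\ref{prop: nlmain2}, but with the target norm $L_{\omega}^{2}L_{t}^{2}\mathbb{Z}$ in place of $L_{\omega}^{\alpha}L_{t}^{\alpha}\XXX$. As in Steps~1--3 of that proof, I would condition on the dyadic level set $\{\omega : \|M^{*}\|_{L_{t}^{\alpha}[0,\infty)} \sim A\}$ and, pointwise in such an $\omega$, partition $[0,\infty)$ into $J \lesssim 1 + A^{\alpha}/e_{M}^{\alpha}$ subintervals $[t_{j}, t_{j+1}]$ on each of which $\|M^{*}\|_{L_{t}^{\alpha}[t_{j}, t_{j+1}]} \leq e_{M}$, where $e_{M}$ is the smallness threshold from Proposition~\ref{prop: lcmdstability}. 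On each subinterval I write the Duhamel formula \eqref{eq: ggg} and set $u_{2}(t) := \eta(t_{j}, t)$ (the stochastic integral piece) and $u_{1}(t) := u(t) - u_{2}(t)$.

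\textbf{Control of $u_{2}$.} This is immediate from the definition of $M_{1}^{*}$. On every subinterval $u_{2}(t)$ is exactly one of the stochastic integrals appearing in the supremum defining $M_{1}^{*}(t)$ (namely the one with $a = t_{j}$, $b = t$), so the pointwise a.s.\ bound $\|u_{2}(t)\|_{\mathbb{Z}} \leq M_{1}^{*}(t)$ holds globally after concatenating the pieces. Applying $L_{\omega}^{2}L_{t}^{2}$ to both sides then yields \eqref{eq: f2}.

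\textbf{Control of $u_{1}$ and main obstacle.} For $u_{1}$ I would run the analysis of Proposition~\ref{prop: lcmdstability} on each subinterval, but enhance its output norm to include the $L_{t}^{2}\mathbb{Z}$ piece. The homogeneous part $H(\cdot - t_{j})u(t_{j})$ is controlled in $L_{t}^{2}\mathbb{Z}$ by the local smoothing estimate Lemma~\ref{lem: ls1} combined with the mass bound $\|u(t_{j})\|_{L_{x}^{2}} \leq M$. For the Duhamel contribution I would use the dual local smoothing estimate, in the spirit of \eqref{eq: sls1cor}, to absorb the damping term $-i\eps^{2}V^{2}u_{1} \in L_{t}^{2}L_{x}^{6/5}$ (the natural integrability coming from the energy identity in the proof of Lemma~\ref{lem: hnlsglobal}) and the nonlinearity $|u_{1}+u_{2}|^{4/3}(u_{1}+u_{2})$, which sits in a dual Strichartz space thanks to the $\XXX$-bound already produced by Proposition~\ref{prop: lcmdstability}. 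Since on each subinterval $\|u_{2}\|_{L_{t}^{\alpha}L_{x}^{\beta}([t_{j},t_{j+1}])} \leq \|M^{*}\|_{L_{t}^{\alpha}[t_{j},t_{j+1}]} \leq e_{M}$, this enhanced stability closes the bootstrap and yields $\|u_{1}\|_{L_{t}^{2}\mathbb{Z}([t_{j}, t_{j+1}])} \lesssim_{M} 1$. Summing the squares over the $J$ subintervals gives $\|u_{1}\|_{L_{t}^{2}\mathbb{Z}[0,\infty)}^{2} \lesssim J \lesssim 1 + A^{\alpha}$ pointwise on $\{\|M^{*}\|_{L_{t}^{\alpha}} \sim A\}$; a dyadic sum as in \eqref{eq: as} followed by taking $L_{\omega}^{2}$ then produces \eqref{eq: f1}. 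The main obstacle I anticipate is exactly this enhancement of Proposition~\ref{prop: lcmdstability}: one must verify that the half-derivative local smoothing for $H$ is compatible with the bootstrap for the $L^{2}$-critical nonlinearity $|u|^{4/3}u$ on each short subinterval, and in particular that the dual local smoothing bound for the nonlinear input does not generate constants that degrade the smallness encoded in $e_{M}$.
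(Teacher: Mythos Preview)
Your proposal is correct and follows essentially the same approach as the paper, which simply refers back to the template of Proposition~\ref{prop: nlmain2} (itself a refinement of Lemma~3.6 in \cite{fan2020long}) together with Proposition~\ref{prop: lcmdstability}. One small remark: the ``obstacle'' you flag --- upgrading Proposition~\ref{prop: lcmdstability} to control $u_{1}$ in $L_{t}^{2}\mathbb{Z}$ --- is handled exactly as in that proof, namely via Christ--Kiselev combining the dual Strichartz bound $\|\int_{0}^{\infty}e^{-is\Delta}F\,ds\|_{L_{x}^{2}}\lesssim \|F\|_{L_{t}^{a'}L_{x}^{b'}}$ with the local smoothing estimate $\|e^{it\Delta}f\|_{L_{t}^{2}\mathbb{Z}}\lesssim \|f\|_{L_{x}^{2}}$, so no genuine dual local smoothing for the nonlinear input is needed and the smallness threshold $e_{M}$ is not affected.
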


Applying Lemma \ref{lem: secondsplit},
we further compute, for all $t$ fixed, (and as usual, we may only consider $t\geq 1$), via Burkholder inequality
\begin{equation}
\begin{aligned}
\|M^{*}_{1}(t)\|_{L_{\omega}^{2}}\lesssim \int_{0}^{t}\|\langle x \rangle^{-10}<D>^{1/2}H(t-s)\epsilon Vu(s)ds\|_{L_{\omega}^{2}L_{x}^{2}}^{2}
\end{aligned}
\end{equation}
and note that 
\begin{equation}
\begin{aligned}
 \int_{0}^{t}\|\langle x \rangle^{-10}<D>^{1/2}H(t-s)\epsilon Vu(s)ds\|_{L_{x}^{2}}^{2}\lesssim& \int_{0}^{t}\|\langle x \rangle^{-10}H(t-s)\epsilon V<D>^{1/2} u_{1}\|_{L_{x}^{2}}^{2}ds\\+&\textcolor{red}{\epsilon^{2}}\int_{0}^{t}\|\langle x \rangle^{-10}<D>^{1/2}H(t-s)Vu_{2}\|_{L_{x}^{2}}^{2}ds.
 \end{aligned}
\end{equation}
Clearly, one has for the part with $u_{i}$, $i=1,2$
\begin{equation}
\int_{0}^{t}\|H(t-s) Vu_{i}(s)\|_{\mathbb{Z}}^{2}ds\lesssim \int_{|t-s|<1, s\in [0,t]}\|<D>^{1/2}Vu_{i}(s)\|_{L_{x}^{2}}^{2}+\int_{|t-s|>1, s\in [0,t]}	(t-s)^{-3}\|<D>^{1/2}Vu_{i}(s)\|_{L_{x}^{1}}^{2}.
\end{equation}
Taking $L_{\omega}^{2}L_{x}^{2}$, and it is controlled via $\|u_{1}\|_{L_{\omega}^{2}L_{t}^{2}\mathbb{Z}(\Omega\times [0,\infty)\times \mathbb{R}^{3})}$ and $\|u_{2}\|_{L_{\omega}^{2}L_{t}^{2}\mathbb{Z}(\Omega\times \times [0,\infty)\times\mathbb{R}^{3})}$ thanks to the localization of $V$. Recall \eqref{eq: f1}, \eqref{eq: f2}, we are done.

\section{Covering all non-end point case, proof for \eqref{eq: estimatemain_nonlinear222}}
We need to push $\alpha$ below $7/3$. We do a proof for all $2<\alpha<4$. Let $(\alpha, \beta)$ be admissible, then $\beta>3$.

One can see, in some sense, the computation, at least the algebraic part, goes back to the linear model.

First observe via Lemma \ref{lem: secondsplit} and estimate \eqref{eq: secondscattering}, we obtain
\begin{equation}\label{eq: hhh}
\|<x>^{-100}<\nabla>^{1/2}u\|_{L_{\omega}^{2}L_{t}^{2}L_{x}^{2}}\lesssim 1.	
\end{equation}

And, now, we rewrite the Duhamel Formula of \eqref{eq: snls} as 
\begin{equation}
	u(t)=S(t)u_{0}-i\int_{0}^{t}S(t-s)|u|^{4/3}uds-i\int_{0}^{t}S(t-s)\epsilon Vu(s)dB_{s}-\frac{1}{2}\int S(t-s)\epsilon^{2}V^{2}u(s)ds.
\end{equation}

Except for the stochastic integral term, all other terms can be directly handle by Strichartz estiamte and bounded in $L_{\omega}^{2}L_{t}^{\alpha}L_{x}^{\beta}$.

We only need to handle the stochastic term. We will indeed control it in $L_{\omega}^{\alpha}L_{t}^{\alpha}L_{x}^{\beta}$.

Note that \eqref{eq: hhh} implies, (via interpolation with mass conservation law), see also the algebra in Remark \ref{rem: num}, that 
\begin{equation}
\|Vu\|_{L_{\omega}^{\alpha}L_{t}^{\alpha}W^{s_{\alpha}},\tilde{\beta}'}+\|Vu\|_{L_{\omega}^{\alpha}L_{t}^{\alpha}L_{x}^{\beta'}}\lesssim 1	
\end{equation}
where we let $\dot{W}^{s_{\alpha},\tilde{\beta}}$ embeds $L^{\beta}$, noting that $\tilde{\beta}<3$.

We split the stochastic integral into $|t-s|<1$ and $|t-s|>1$.

For $|t-s|>1$, just apply Burkholder, and observe 
\begin{equation}
\int_{0}^{t}\chi_{t-s>1}\|S(t-s)Vu(s)\|_{L_{x}^{\beta}}^{2}\lesssim \int_{0}^{t}<t-s>^{-6(1/2-1/\beta)}\|Vu(s)\|_{L_{x}^{\tilde{\beta}'}}^{2}
\end{equation}
and an application of Young inequality will end the proof, noting that $6(1/2-1/\beta)>1$.

For $|t-s|<1$,  just apply Burkholder, and observe 
\begin{equation}
\int_{0}^{t}\chi_{t-s<1}\|S(t-s)Vu(s)\|_{L_{x}^{\beta}}^{2}\lesssim \int_{0}^{t}\chi_{t-s<1}(t-s)^{-6(\frac{1}{2}-\frac{1}{\tilde{\beta}})}\|<D>^{s_{\alpha}}Vu(s)\|_{L_{x}^{\tilde{\beta}'}}^{2}
\end{equation}
and 
$6(\frac{1}{2}-\frac{1}{\tilde{\beta}})<1$ since $\tilde{\beta}<3$, and we use Young to conclude.

\bibliographystyle{amsplain}
\bibliographystyle{plain}
\bibliography{BG}

\end{document}